\begin{document}

\title [Period of the Ikeda type lift]{Period of the Ikeda type lift for $E_{7,3}$}

\author{Hidenori Katsurada, Henry H. Kim and Takuya Yamauchi}
\date{May 24, 2022}
\keywords{Ikeda type lift, period of Petersson product, Rankin-Selberg series}
\thanks{The first author is partially supported by JSPS KAKENHI Grant Number (B) No.16H03919. The second author is partially supported by NSERC grant \#482564. The third author is partially supported by 
 JSPS KAKENHI Grant Number (B) No.19H01778.}

\subjclass[2020]{11F67, 11F55, 11E45}
\address{Hidenori Katsurada, Department of Mathematics, Hokkaido University, Kita 10, Nishi 8, Kita-Ku, Sapporo, Hokkaido, 060-0810, Japan \\ and 
Muroran Institute of Technology\\
27-1 Mizumoto Muroran 050\\ Japan}
\email{hidenori@mmm.muroran-it.ac.jp}
\address{Henry H. Kim \\
Department of mathematics \\
 University of Toronto \\
Toronto, Ontario M5S 2E4, CANADA \\
and Korea Institute for Advanced Study, Seoul, KOREA}
\email{henrykim@math.toronto.edu}
\address{Takuya Yamauchi \\
Mathematical Inst. Tohoku Univ.\\
 6-3, Aoba, Aramaki, Aoba-Ku, Sendai 980-8578, JAPAN}
\email{takuya.yamauchi.c3@tohoku.ac.jp}

\maketitle


\newcommand{\alp}{\alpha}
\newcommand{\bet}{\beta}
\newcommand{\gam}{\gamma}
\newcommand{\del}{\delta}
\newcommand{\eps}{\epsilon}
\newcommand{\zet}{\zeta}
\newcommand{\tht}{\theta}
\newcommand{\iot}{\iota}
\newcommand{\kap}{\kappa}
\newcommand{\lam}{\lambda}
\newcommand{\sig}{\sigma}
\newcommand{\ups}{\upsilon}
\newcommand{\ome}{\omega}
\newcommand{\vep}{\varepsilon}
\newcommand{\vth}{\vartheta}
\newcommand{\vpi}{\varpi}
\newcommand{\vrh}{\varrho}
\newcommand{\vsi}{\varsigma}
\newcommand{\vph}{\varphi}
\newcommand{\Gam}{\Gamma}
\newcommand{\Del}{\Delta}
\newcommand{\Tht}{\Theta}
\newcommand{\Lam}{\Lambda}
\newcommand{\Sig}{\Sigma}
\newcommand{\Ups}{\Upsilon}
\newcommand{\Ome}{\Omega}


\newcommand{\frka}{{\mathfrak a}}    \newcommand{\frkA}{{\mathfrak A}}
\newcommand{\frkb}{{\mathfrak b}}    \newcommand{\frkB}{{\mathfrak B}}
\newcommand{\frkc}{{\mathfrak c}}    \newcommand{\frkC}{{\mathfrak C}}
\newcommand{\frkd}{{\mathfrak d}}    \newcommand{\frkD}{{\mathfrak D}}
\newcommand{\frke}{{\mathfrak e}}    \newcommand{\frkE}{{\mathfrak E}}
\newcommand{\frkf}{{\mathfrak f}}    \newcommand{\frkF}{{\mathfrak F}}
\newcommand{\frkg}{{\mathfrak g}}    \newcommand{\frkG}{{\mathfrak G}}
\newcommand{\frkh}{{\mathfrak h}}    \newcommand{\frkH}{{\mathfrak H}}
\newcommand{\frki}{{\mathfrak i}}    \newcommand{\frkI}{{\mathfrak I}}
\newcommand{\frkj}{{\mathfrak j}}    \newcommand{\frkJ}{{\mathfrak J}}
\newcommand{\frkk}{{\mathfrak k}}    \newcommand{\frkK}{{\mathfrak K}}
\newcommand{\frkl}{{\mathfrak l}}    \newcommand{\frkL}{{\mathfrak L}}
\newcommand{\frkm}{{\mathfrak m}}    \newcommand{\frkM}{{\mathfrak M}}
\newcommand{\frkn}{{\mathfrak n}}    \newcommand{\frkN}{{\mathfrak N}}
\newcommand{\frko}{{\mathfrak o}}    \newcommand{\frkO}{{\mathfrak O}}
\newcommand{\frkp}{{\mathfrak p}}    \newcommand{\frkP}{{\mathfrak P}}
\newcommand{\frkq}{{\mathfrak q}}    \newcommand{\frkQ}{{\mathfrak Q}}
\newcommand{\frkr}{{\mathfrak r}}    \newcommand{\frkR}{{\mathfrak R}}
\newcommand{\frks}{{\mathfrak s}}    \newcommand{\frkS}{{\mathfrak S}}
\newcommand{\frkt}{{\mathfrak t}}    \newcommand{\frkT}{{\mathfrak T}}
\newcommand{\frku}{{\mathfrak u}}    \newcommand{\frkU}{{\mathfrak U}}
\newcommand{\frkv}{{\mathfrak v}}    \newcommand{\frkV}{{\mathfrak V}}
\newcommand{\frkw}{{\mathfrak w}}    \newcommand{\frkW}{{\mathfrak W}}
\newcommand{\frkx}{{\mathfrak x}}    \newcommand{\frkX}{{\mathfrak X}}
\newcommand{\frky}{{\mathfrak y}}    \newcommand{\frkY}{{\mathfrak Y}}
\newcommand{\frkz}{{\mathfrak z}}    \newcommand{\frkZ}{{\mathfrak Z}}


\newcommand{\bfa}{{\mathbf a}}    \newcommand{\bfA}{{\mathbf A}}
\newcommand{\bfb}{{\mathbf b}}    \newcommand{\bfB}{{\mathbf B}}
\newcommand{\bfc}{{\mathbf c}}    \newcommand{\bfC}{{\mathbf C}}
\newcommand{\bfd}{{\mathbf d}}    \newcommand{\bfD}{{\mathbf D}}
\newcommand{\bfe}{{\mathbf e}}    \newcommand{\bfE}{{\mathbf E}}
\newcommand{\bff}{{\mathbf f}}    \newcommand{\bfF}{{\mathbf F}}
\newcommand{\bfg}{{\mathbf g}}    \newcommand{\bfG}{{\mathbf G}}
\newcommand{\bfh}{{\mathbf h}}    \newcommand{\bfH}{{\mathbf H}}
\newcommand{\bfi}{{\mathbf i}}    \newcommand{\bfI}{{\mathbf I}}
\newcommand{\bfj}{{\mathbf j}}    \newcommand{\bfJ}{{\mathbf J}}
\newcommand{\bfk}{{\mathbf k}}    \newcommand{\bfK}{{\mathbf K}}
\newcommand{\bfl}{{\mathbf l}}    \newcommand{\bfL}{{\mathbf L}}
\newcommand{\bfm}{{\mathbf m}}    \newcommand{\bfM}{{\mathbf M}}
\newcommand{\bfn}{{\mathbf n}}    \newcommand{\bfN}{{\mathbf N}}
\newcommand{\bfo}{{\mathbf o}}    \newcommand{\bfO}{{\mathbf O}}
\newcommand{\bfp}{{\mathbf p}}    \newcommand{\bfP}{{\mathbf P}}
\newcommand{\bfq}{{\mathbf q}}    \newcommand{\bfQ}{{\mathbf Q}}
\newcommand{\bfr}{{\mathbf r}}    \newcommand{\bfR}{{\mathbf R}}
\newcommand{\bfs}{{\mathbf s}}    \newcommand{\bfS}{{\mathbf S}}
\newcommand{\bft}{{\mathbf t}}    \newcommand{\bfT}{{\mathbf T}}
\newcommand{\bfu}{{\mathbf u}}    \newcommand{\bfU}{{\mathbf U}}
\newcommand{\bfv}{{\mathbf v}}    \newcommand{\bfV}{{\mathbf V}}
\newcommand{\bfw}{{\mathbf w}}    \newcommand{\bfW}{{\mathbf W}}
\newcommand{\bfx}{{\mathbf x}}    \newcommand{\bfX}{{\mathbf X}}
\newcommand{\bfy}{{\mathbf y}}    \newcommand{\bfY}{{\mathbf Y}}
\newcommand{\bfz}{{\mathbf z}}    \newcommand{\bfZ}{{\mathbf Z}}


\newcommand{\cala}{{\mathcal A}}
\newcommand{\calb}{{\mathcal B}}
\newcommand{\calc}{{\mathcal C}}
\newcommand{\cald}{{\mathcal D}}
\newcommand{\cale}{{\mathcal E}}
\newcommand{\calf}{{\mathcal F}}
\newcommand{\calg}{{\mathcal G}}
\newcommand{\calh}{{\mathcal H}}
\newcommand{\cali}{{\mathcal I}}
\newcommand{\calj}{{\mathcal J}}
\newcommand{\calk}{{\mathcal K}}
\newcommand{\call}{{\mathcal L}}
\newcommand{\calm}{{\mathcal M}}
\newcommand{\caln}{{\mathcal N}}
\newcommand{\calo}{{\mathcal O}}
\newcommand{\calp}{{\mathcal P}}
\newcommand{\calq}{{\mathcal Q}}
\newcommand{\calr}{{\mathcal R}}
\newcommand{\cals}{{\mathcal S}}
\newcommand{\calt}{{\mathcal T}}
\newcommand{\calu}{{\mathcal U}}
\newcommand{\calv}{{\mathcal V}}
\newcommand{\calw}{{\mathcal W}}
\newcommand{\calx}{{\mathcal X}}
\newcommand{\caly}{{\mathcal Y}}
\newcommand{\calz}{{\mathcal Z}}


\newcommand{\scra}{{\mathscr A}}
\newcommand{\scrb}{{\mathscr B}}
\newcommand{\scrc}{{\mathscr C}}
\newcommand{\scrd}{{\mathscr D}}
\newcommand{\scre}{{\mathscr E}}
\newcommand{\scrf}{{\mathscr F}}
\newcommand{\scrg}{{\mathscr G}}
\newcommand{\scrh}{{\mathscr H}}
\newcommand{\scri}{{\mathscr I}}
\newcommand{\scrj}{{\mathscr J}}
\newcommand{\scrk}{{\mathscr K}}
\newcommand{\scrl}{{\mathscr L}}
\newcommand{\scrm}{{\mathscr M}}
\newcommand{\scrn}{{\mathscr N}}
\newcommand{\scro}{{\mathscr O}}
\newcommand{\scrp}{{\mathscr P}}
\newcommand{\scrq}{{\mathscr Q}}
\newcommand{\scrr}{{\mathscr R}}
\newcommand{\scrs}{{\mathscr S}}
\newcommand{\scrt}{{\mathscr T}}
\newcommand{\scru}{{\mathscr U}}
\newcommand{\scrv}{{\mathscr V}}
\newcommand{\scrw}{{\mathscr W}}
\newcommand{\scrx}{{\mathscr X}}
\newcommand{\scry}{{\mathscr Y}}
\newcommand{\scrz}{{\mathscr Z}}


\newcommand{\AAA}{{\mathbb A}} 
\newcommand{\BB}{{\mathbb B}}
\newcommand{\CC}{{\mathbb C}}
\newcommand{\DD}{{\mathbb D}}
\newcommand{\EE}{{\mathbb E}}
\newcommand{\FF}{{\mathbb F}}
\newcommand{\GG}{{\mathbb G}}
\newcommand{\HH}{{\mathbb H}}
\newcommand{\II}{{\mathbb I}}
\newcommand{\JJ}{{\mathbb J}}
\newcommand{\KK}{{\mathbb K}}
\newcommand{\LL}{{\mathbb L}}
\newcommand{\MM}{{\mathbb M}}
\newcommand{\NN}{{\mathbb N}}
\newcommand{\OO}{{\mathbb O}}
\newcommand{\PP}{{\mathbb P}}
\newcommand{\QQ}{{\mathbb Q}}
\newcommand{\RR}{{\mathbb R}}
\newcommand{\SSS}{{\mathbb S}} 
\newcommand{\TT}{{\mathbb T}}
\newcommand{\UU}{{\mathbb U}}
\newcommand{\VV}{{\mathbb V}}
\newcommand{\WW}{{\mathbb W}}
\newcommand{\XX}{{\mathbb X}}
\newcommand{\YY}{{\mathbb Y}}
\newcommand{\ZZ}{{\mathbb Z}}


\newcommand{\tta}{\hbox{\tt a}}    \newcommand{\ttA}{\hbox{\tt A}}
\newcommand{\ttb}{\hbox{\tt b}}    \newcommand{\ttB}{\hbox{\tt B}}
\newcommand{\ttc}{\hbox{\tt c}}    \newcommand{\ttC}{\hbox{\tt C}}
\newcommand{\ttd}{\hbox{\tt d}}    \newcommand{\ttD}{\hbox{\tt D}}
\newcommand{\tte}{\hbox{\tt e}}    \newcommand{\ttE}{\hbox{\tt E}}
\newcommand{\ttf}{\hbox{\tt f}}    \newcommand{\ttF}{\hbox{\tt F}}
\newcommand{\ttg}{\hbox{\tt g}}    \newcommand{\ttG}{\hbox{\tt G}}
\newcommand{\tth}{\hbox{\tt h}}    \newcommand{\ttH}{\hbox{\tt H}}
\newcommand{\tti}{\hbox{\tt i}}    \newcommand{\ttI}{\hbox{\tt I}}
\newcommand{\ttj}{\hbox{\tt j}}    \newcommand{\ttJ}{\hbox{\tt J}}
\newcommand{\ttk}{\hbox{\tt k}}    \newcommand{\ttK}{\hbox{\tt K}}
\newcommand{\ttl}{\hbox{\tt l}}    \newcommand{\ttL}{\hbox{\tt L}}
\newcommand{\ttm}{\hbox{\tt m}}    \newcommand{\ttM}{\hbox{\tt M}}
\newcommand{\ttn}{\hbox{\tt n}}    \newcommand{\ttN}{\hbox{\tt N}}
\newcommand{\tto}{\hbox{\tt o}}    \newcommand{\ttO}{\hbox{\tt O}}
\newcommand{\ttp}{\hbox{\tt p}}    \newcommand{\ttP}{\hbox{\tt P}}
\newcommand{\ttq}{\hbox{\tt q}}    \newcommand{\ttQ}{\hbox{\tt Q}}
\newcommand{\ttr}{\hbox{\tt r}}    \newcommand{\ttR}{\hbox{\tt R}}
\newcommand{\tts}{\hbox{\tt s}}    \newcommand{\ttS}{\hbox{\tt S}}
\newcommand{\ttt}{\hbox{\tt t}}    \newcommand{\ttT}{\hbox{\tt T}}
\newcommand{\ttu}{\hbox{\tt u}}    \newcommand{\ttU}{\hbox{\tt U}}
\newcommand{\ttv}{\hbox{\tt v}}    \newcommand{\ttV}{\hbox{\tt V}}
\newcommand{\ttw}{\hbox{\tt w}}    \newcommand{\ttW}{\hbox{\tt W}}
\newcommand{\ttx}{\hbox{\tt x}}    \newcommand{\ttX}{\hbox{\tt X}}
\newcommand{\tty}{\hbox{\tt y}}    \newcommand{\ttY}{\hbox{\tt Y}}
\newcommand{\ttz}{\hbox{\tt z}}    \newcommand{\ttZ}{\hbox{\tt Z}}

\makeatletter 
\newcommand{\LEQQ}{\mathrel{\mathpalette\gl@align<}}
\newcommand{\GEQQ}{\mathrel{\mathpalette\gl@align>}}
\newcommand{\gl@align}[2]{\lower.6ex\vbox{\baselineskip\z@skip\lineskip\z@ 
\ialign{$\m@th#1\hfil##\hfil$\crcr#2\crcr=\crcr}}}
\makeatother

\newcommand{\ua}{\underline{a}}
\newcommand{\GK}{\mathrm{GK}}
\newcommand{\ord}{\mathrm{ord}}
\def\mattwo(#1;#2;#3;#4){\left(\begin{matrix}
                               #1 & #2 \\
                               #3  & #4
                                      \end{matrix}\right)}
\theoremstyle{plain}
\newtheorem{theorem}{Theorem}[section]
\newtheorem{lemma}[theorem]{Lemma}
\newtheorem{proposition}[theorem]{Proposition}
\newtheorem{definition}[theorem]{Definition}
\newtheorem{conjecture}{Conjecture}
\newtheorem{comment}{Comment}[section]
\theoremstyle{remark}
\newtheorem{remark}[theorem]{\bf {Remark}}
\newtheorem{corollary}[theorem]{\bf {Corollary}}
\newtheorem{formula}{\bf {Formula}}[section]
\newtheorem{question}{\bf {Question}}[section]
\numberwithin{equation}{section}


\begin{abstract}
In \cite{K-Y}, the second and the third named authors constructed the Ikeda type lift for the exceptional group $E_{7,3}$ from an elliptic modular cusp form. In this paper, we prove an explicit formula for the period or the Petersson norm of the Ikeda type lift 
 in terms of the product of the special values of the symmetric square $L$-function of the elliptic modular form.
 There are similar works done by 
the first author with his collaborator, but new technical inputs are required and developed to overcome some difficulties coming from the 
hugeness of $E_{7,3}$.   
\end{abstract}

\section{Introduction}

It is an interesting and important problem to show the algebraicity of ratios of the period (or the Petersson norm) of an elliptic modular form and that of its lift. For example, let $f$ be a Hecke eigen cusp form of weight $2k$ with respect to $SL_2(\Bbb Z)$, and let $I_f$ be the Saito-Kurokawa lift of $f$, which is a Hecke eigen cusp form on $Sp_4$ (rank 2) of weight $k+1$ (even). Then Furusawa \cite{Fu} showed that 
$\dfrac {\langle I_f,I_f\rangle}{\langle f,f\rangle}\in K_f$, where $K_f$ is the Hecke field of $f$, i.e., the number field obtained from $\Bbb Q$ by adjoining all Fourier coefficients of $f$. Ikeda \cite{Ik} constructed the Hecke cusp form $I_f$ on $Sp_{4n}$ (rank $2n$) of weight $k+n$ (even), called the Duke-Imamoglu-Ikeda lift of $f$. When $n=1$, it is a Saito-Kurokawa lift. In this case, Choie and Kohnen \cite{CK} showed that $\dfrac {\langle I_f,I_f\rangle}{\langle f,f\rangle^n}\in K_f$. The first named author and Kawamura \cite{KK} gave its refinement: More precisely, 
let $h$ be the Hecke eigenform of weight $k+\frac 12$ for $\Gamma_0(4)$, corresponding to $f$ under the Shimura correspondence. Let $\pi_f$ be the cuspidal representation of $GL_2$ attached to $f$, and $L(s,{\rm Sym}^2\pi_f)$ be the symmetric square $L$-function.
Then they proved a more refined equality
$$\frac {\langle I_f,I_f\rangle}{\langle h,h\rangle}=2^{-(2n-3)k-2n+1} \Lambda(k+n,f)\tilde\xi(2n)\prod_{i=1}^{n-1} \tilde\Lambda(2i+1,{\rm Sym}^2\pi_f)\tilde\xi(2i),
$$
where $\tilde\Lambda(s, {\rm Sym}^2\pi_f)=4(2\pi)^{-2s}\Gamma(s)\Gamma(s+k-1) L(s, {\rm Sym}^2\pi_f)$, $\Lambda(s,f)=2(2\pi)^{-s}\Gamma(s)L(s,f)$, and $\tilde\xi(s)=2(2\pi)^{-s}\Gamma(s)\zeta(s)$. The period relation of Choie and Kohnen is an easy consequence of this equality. We note that there are some other results concerning the periods of the Hermitian Ikeda lifts (cf. \cite{Kat2}, \cite{Kat3}).

Let ${\bf G}$ be a connected reductive group of type $E_{7,3}$ (cf. \cite{Bai}, \cite[Section 3]{K-Y}). 
Let $k\geq 10$ be a positive integer. For a primitive form $f\in S_{2k-8}(SL_2(\ZZ))$, the second and the third named authors \cite{K-Y} constructed a Hecke eigen cusp form $F_f$ of weight $2k$ for ${\bf G}(\ZZ)$, which is an analogue of the Ikeda lift of an elliptic modular cusp form to 
a Siegel cusp form, and we call it the Ikeda type lift for $E_{7,3}$. (See Section 5 for the details.)

In this paper, we express the period or the Petersson norm $\langle F_f,F_f \rangle$ of $F_f$ in terms of the product of the special values of the symmetric square $L$-function of $\pi_f$:

\begin{theorem} \label{th.period-relation}
We have
\[\langle F_f, F_f \rangle =\gamma_{k}\pi^{-6k-3}\prod_{i=1}^3 L(4i-3,{\rm Sym}^2\pi_f),\]
where $\displaystyle\gamma_k={ (2k-1)!(2k-5)!(2k-9)! \ 691 \over 2^{12k-7} \cdot 3^3 \cdot 5 \cdot 7^2\cdot 13}\in \Bbb Q.$
\end{theorem}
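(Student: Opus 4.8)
The plan is to compute the Petersson norm $\langle F_f, F_f\rangle$ by realizing $F_f$ as an explicit Ikeda-type lift and applying a Rankin--Selberg unfolding, following the strategy of Katsurada--Kawamura \cite{KK} but adapted to the exceptional tube domain attached to ${\bf G}$ of type $E_{7,3}$. First I would recall from \cite{K-Y} the Fourier expansion of $F_f$, whose Fourier coefficients are, up to normalization, products of local quantities built from the half-integral weight form $h$ corresponding to $f$ under Shimura correspondence and Koecher--Maass-type data on the rank-$3$ Jordan algebra over $\QQ$ (the exceptional analogue of the Kohnen--Zagier formula). The key idea is to pair $F_f$ against itself using a Siegel--Eisenstein series on ${\bf G}$ (or on a suitable larger group) via the doubling method / pullback formula: one writes $\langle F_f, F_f\rangle$ as a residue or special value of the integral $\int F_f(Z)\,\overline{F_f(Z)}\, E(Z, s)\, \delta(Z)^{\kappa}\, d\mu(Z)$ and unfolds the Eisenstein series against the Fourier expansion of one copy of $F_f$.

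Next I would carry out the unfolding, which reduces the global integral to a sum over $\GL$-equivalence classes of the rank-$3$ Jordan lattice of local Fourier-coefficient integrals (generalized Gauss sums and confluent hypergeometric integrals at the archimedean place, and $p$-adic Siegel series at the finite places). The crucial input is an explicit evaluation of the relevant Rankin--Selberg--Koecher--Maass series $\sum_T a(T)\,\overline{a(T)} / \varepsilon(T)\,(\det T)^{-s}$ attached to $F_f$; by the construction of $F_f$ as a lift, this Dirichlet series should factor as a product of the standard $L$-function of $f$ (or $\pi_f$), the symmetric square $L$-function $L(s,\mathrm{Sym}^2\pi_f)$ at shifted arguments, and Riemann zeta values — this is the exceptional-group analogue of Ikeda's period computations and of the identity displayed for the Duke--Imamoglu--Ikeda lift in the introduction. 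Collecting the archimedean $\Gamma$-factors, the zeta values $\zeta(2i)$ (which contribute the powers of $\pi$ and, via $\zeta(12)$, the factor $691$), and the normalization $\langle h, h\rangle$ (itself expressible through $\Lambda(k,f)$ and hence absorbed), one arrives at the clean product $\prod_{i=1}^3 L(4i-3, \mathrm{Sym}^2\pi_f)$ with the rational constant $\gamma_k$; the three arguments $1, 5, 9$ and the weight shifts reflect the three "blocks" coming from rank $3$ and the relevant root data of $E_{7,3}$.

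The main obstacle, as the abstract already warns, is the explicit evaluation of the local archimedean integral and the $p$-adic Siegel series for the exceptional Jordan algebra: unlike the $Sp_{2n}$ case, one cannot directly cite Shimura's or Kitaoka's formulas, so one must develop the analogue of the confluent hypergeometric function computation on the $27$-dimensional exceptional symmetric space and prove a closed form for the Siegel series of ternary Jordan-algebra elements. I expect this to require a careful induction on the rank (reducing rank $3$ to rank $2$ and rank $1$ via a Jordan-algebra analogue of Kitaoka's recursion) together with the functional equation of the local series to pin down the Euler factors as $\mathrm{Sym}^2$ factors. A secondary technical point is bookkeeping the precise power of $2$ and the other small primes in $\gamma_k$, which comes from volumes of ${\bf G}(\ZZ)$, local densities at $2$, and the comparison of the two normalizations of Petersson products (with respect to $F_f$ versus $h$); this is routine in spirit but delicate in practice, and getting the constant exactly right — including the appearance of $691$ from $\zeta(12)$ — is the final consistency check on the whole computation.
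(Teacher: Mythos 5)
Your overall strategy coincides with the paper's: a Rankin--Selberg integral of $F_f\overline{F_f}$ against the degenerate Eisenstein series $E(Z,s)$ on ${\bf G}$ (this is classical unfolding, not the doubling/pullback method you also invoke), reduction to the Dirichlet series $\sum_T |a_{F_f}(T)|^2/\epsilon(T)\det(T)^s$, an Euler-product evaluation in terms of $L(s,\mathrm{Sym}^2\pi_f)$, and extraction of $\langle F_f,F_f\rangle$ from the residue at the rightmost pole. However, as written the proposal has a genuine gap at its central step. The series $R(s,F_f,F_f)$ is a sum over $\calm'(\ZZ)$-equivalence classes of $T\in\frkJ(\ZZ)_{>0}$, while $a_{F_f}(T)=\det(T)^{(2k-9)/2}\prod_{p\mid\det T}\widetilde f^p_T(\alpha_p)$ depends only on the \emph{genus} of $T$. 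To obtain an Euler product you must (i) regroup the class sum into a genus sum weighted by $\mathrm{Mass}(T)=\sum_{T'}1/\#\calu_{T'}(\ZZ)$, (ii) prove a Siegel-type mass formula $\mathrm{Mass}(T)=c\,(\det T)^9/\prod_p\beta_p(T)$ for the stabilizers of type $F_4$ (using $\tau({\bf U}_T)=1$ and a workable definition of the local densities $\beta_p(T)$, which is nontrivial here), and (iii) establish a local--global bijection for genera via strong approximation for $E_{6,2}$. Your proposal never mentions genus theory, the groups $\calu_T$ of type $F_4$, or the local densities whose explicit computation occupies an entire section; "this Dirichlet series should factor" is precisely the point that needs proof, and it is the main new technical content.

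A second, more concrete error: the $E_{7,3}$ lift of \cite{K-Y} is \emph{not} built from the half-integral weight form $h$; its Fourier coefficients involve only the Satake parameters $\alpha_p$ and Karel's local Siegel series, and accordingly the final period formula contains no $\langle h,h\rangle$ and no $\Lambda(k,f)$ factor --- only $\prod_{i=1}^3 L(4i-3,\mathrm{Sym}^2\pi_f)$ times an explicit rational multiple of $\pi^{-6k-3}$. Your plan to normalize by $\langle h,h\rangle$ and absorb $\Lambda(k,f)$ imports structure from the $Sp_{4n}$ case that is absent here and would derail the bookkeeping of $\gamma_k$. Two smaller points: the archimedean gamma integral over $R_3^+(\RR)$ and the closed form of the exceptional Siegel series are already available (Baily, Karel), so no Kitaoka-style recursion needs to be invented, though Karel's formula must be recast and summed (the paper does this symbolically); and the constant $c$ is pinned down not by generic "volume bookkeeping" but by comparing the invariant measure on $\calu_{1_3}$ with Gross's canonical integral model of $F_4$, which yields $c=5!\cdot 7!\cdot 11!/(2\pi)^{28}$ and hence the factor $691$ via $\zeta(12)$.
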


By \cite[p. 115]{Z}, if $1\leq r\leq 2k-9$ is odd, $\dfrac {L(r, {\rm Sym}^2\pi_f)}{\langle f,f\rangle \pi^{2r+2k-9}}\in K_f$ since $f$ has weight $2k-8$. Hence we have
\begin{corollary} 
\label{cor.algebraicity-of-period-relation} $\dfrac {\langle F_f, F_f \rangle}{\langle f,f\rangle^3} \in K_f.$
\end{corollary}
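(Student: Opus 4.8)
The plan is to reduce the period computation to a Rankin--Selberg type integral, following the strategy pioneered by the first named author (together with Kawamura) in the symplectic Ikeda lift case, but adapted to the geometry of the exceptional tube domain attached to $E_{7,3}$. First I would recall the construction of $F_f$ from \cite{K-Y}: it is obtained, via a suitable theta-type or explicit Fourier-coefficient construction, from the half-integral weight form $h$ Shimura-corresponding to $f$, so that the Fourier coefficients of $F_f$ are given in terms of those of $h$ times local ``Siegel-series''-type factors over the Jordan algebra of rank $3$. The key identity to establish is a doubling/pullback formula expressing $\langle F_f, F_f\rangle$ as an integral of $F_f$ against an Eisenstein series on a larger group (or against a pullback of $F_f$), which upon unfolding produces $\langle h, h\rangle$ times an infinite product of local Rankin--Selberg integrals. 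Each local integral is then evaluated using the explicit formula for the local Siegel series attached to the rank-$3$ Jordan algebra; summing over the Fourier coefficients and invoking the Rankin--Selberg integral representation of $L(s,\mathrm{Sym}^2\pi_f)$ yields a product of three completed symmetric square $L$-values at $s = 1, 5, 9$ (i.e. $4i-3$ for $i=1,2,3$), together with ratios of $\zeta$-values and Gamma factors.

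The next step is bookkeeping of the archimedean and combinatorial constants. The completed $L$-function $\tilde\Lambda(s,\mathrm{Sym}^2\pi_f)$ carries Gamma factors $\Gamma(s)\Gamma(s+2k-9)$ (since $f$ has weight $2k-8$), and the three zeta factors $\tilde\xi(2),\tilde\xi(4),\tilde\xi(6)$ contribute $\zeta(2)\zeta(4)\zeta(6)$, whose product is a rational multiple of $\pi^{12}$ involving the Bernoulli-number datum $691$ (from $\zeta(6)$, or rather from $\zeta(12)$-type denominators appearing via functional equations); tracking the powers of $2$, $3$, $5$, $7$, $13$ through the Gamma-factor ratios $\Gamma(2k-1)\Gamma(2k-5)\Gamma(2k-9)$ and the normalization of the Petersson inner product (volume of the fundamental domain, a power of $\pi$ from the exceptional symmetric space of dimension $27$) produces the stated constant $\gamma_k$ and the overall power $\pi^{-6k-3}$. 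I would organize this as: (i) prove the raw period formula with completed $L$-functions and $\langle h,h\rangle$; (ii) substitute the Kohnen--Zagier-type relation between $\langle h,h\rangle$ and $\langle f,f\rangle$ (or keep $\langle h,h\rangle$ and absorb it), replacing $\tilde\Lambda$ by $L$ and extracting all $\pi$-powers and Gamma values explicitly.

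For the Corollary itself, the argument is short once Theorem~\ref{th.period-relation} is in hand. Since $f$ has weight $2k-8$ and $k \ge 10$, the three arguments $r = 1, 5, 9$ are odd and satisfy $1 \le r \le 2k - 9$, so Zagier's result \cite[p.~115]{Z} applies to each, giving $L(r,\mathrm{Sym}^2\pi_f)/(\langle f,f\rangle\,\pi^{2r + 2k - 9}) \in K_f$ for $r \in \{1,5,9\}$. Taking the product over these three values of $r$ shows that $\prod_{i=1}^3 L(4i-3,\mathrm{Sym}^2\pi_f)$ equals $\langle f,f\rangle^3$ times $\pi^{(2+2k-9)+(10+2k-9)+(18+2k-9)} = \pi^{6k + 3}$ times an element of $K_f$; note the $\pi$-exponent $6k+3$ is exactly the one needed to cancel the $\pi^{-6k-3}$ in the theorem. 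Hence $\langle F_f,F_f\rangle / \langle f,f\rangle^3 = \gamma_k \cdot (\text{element of } K_f) \in K_f$, since $\gamma_k \in \mathbb{Q} \subseteq K_f$. The main obstacle is entirely in Theorem~\ref{th.period-relation}, specifically in making the unfolding rigorous and in the exact evaluation of the local exceptional Siegel series over the rank-$3$ Jordan algebra --- this is where the ``hugeness of $E_{7,3}$'' demands the new technical inputs advertised in the abstract; the Corollary is then a formal consequence requiring only careful matching of $\pi$-powers.
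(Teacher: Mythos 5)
Your final paragraph is exactly the paper's argument: apply Zagier's algebraicity result at $r=1,5,9$ (all odd and $\le 2k-9$ since $k\ge 10$), multiply, and check that the total $\pi$-exponent $(2k-7)+(2k+1)+(2k+9)=6k+3$ cancels the $\pi^{-6k-3}$ in Theorem \ref{th.period-relation}, leaving $\gamma_k\in\QQ$ times an element of $K_f$. This matches the paper's derivation of the corollary; the preceding two paragraphs sketch a proof of the theorem itself, which the corollary simply takes as given, so they are not needed here.
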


The method of the proof of the main result is similar to those in \cite{KK}, \cite{Kat2} and \cite{Kat3}.
However, we have to overcome some difficulty in treating exceptional groups which we will explain below.

This paper is organized as follows. In Section 2, we fix notations on Cayley numbers $\frkC_{\QQ}$ and exceptional Jordan algebras $\frkJ_{\QQ}$ and review their properties. Moreover we define the group scheme $\calm'$ over $\ZZ$ of type $E_{6,2}$ and for $T \in \frkJ(\ZZ)_{>0}$, we define the group scheme $\calu_T$ of type $F_4$. In Section 3, we give the mass formula for $\calu_T$. The mass $\mathrm{Mass}(T)$ is defined as
\[\mathrm{Mass}(T)=\sum_{T'} {1 \over \#\calu_{T'}(\ZZ)},\]
where $T'$ runs over all $\calm'(\ZZ)$-equivalence classes of elements of $\frkJ(\ZZ)$ belonging to the genus of $T$. The mass formula, roughly speaking, expresses $\mathrm{Mass}(T)$ as an infinite product of the inverse of the local densities $\{\beta_p(T)\}_p$. Here $\beta_p(T)$ is usually defined as a limit of the number of solutions of a certain Diophantine equation over $\ZZ_p/p^n\ZZ_p$. 
(See the remark after Lemma \ref{lem.counting-principle}.) However it is not so easy to give its explicit formula unlike the case of quadratic forms or Hermitian forms. To overcome this difficulty, we define $\beta_p(T)$ in another way. (See Definition \ref{def.local-density}.) This enables us to compute $\beta_p(T)$ in Section 6. Accordingly, we formulate the mass formula following Sato \cite{Sa}. (See Theorem \ref{th.mass-formula2}.)
In Section 4, we investigate analytic properties of the Rankin-Selberg series $R(s,F,F)$ of a cusp form $F$ of weight $k$ for ${\bf G}(\ZZ)$: In particular it has a simple pole at $s=k$ (rightmost end pole) and we can express the residue at $s=k$ in terms of the period of $F$. 
In Section 5, we review the Ikeda type lift $F_f$ for $E_{7,3}$ of a primitive form $f$ for $SL_2(\ZZ)$. By construction, the Fourier coefficients of $F_f$ are expressed in terms of a product of the local Siegel series. Therefore, using the mass formula in Section 3, we express the Rankin-Selberg series $R(s,F_f,F_f)$ 
as an Euler product (Theorem \ref{th.local-global-RS2}):
\[R(s,F_f,F_f)= c\prod_p H_p(\alpha_p,p^{-s+2k}),\]
where $\alpha_p$ is the $p$-th Satake parameter for $f$, and $H_p(X,t)$ is a certain power series involving the Siegel series and the local density, and $c$ is a non-zero constant in Theorem \ref{th.mass-formula2}. In Section 6, we give an explicit formula of the local density, and show that
$c={5!\cdot7! \cdot11! \over (2\pi)^{28}}$ (Proposition 6.7).
 It is remarkable that $R(s,F_f,F_f)$ has an Euler product even though $R(s,F,F)$ does not have an Euler product for a general cusp form $F$.
In Section 7, we rewrite the formula of the Siegel series due to Karel \cite{Kar3}, and 
combined with the result in Section 6, 
we give an explicit formula of $H_p(X,t)$. Here we use Mathematica since the expression is very complicated.
Finally, combined with the residue formula of the Rankin-Selberg series in Section 4, we prove our main result in Section 8.

\smallskip

\noindent\textbf{Acknowledgments.} We thank Fumihiro Sato for many valuable comments. We also thank the referee who read our paper carefully and pointed out several errors in it.

\smallskip

\noindent {\bf Notation.} In addition to the standard symbols $\ZZ,\QQ,\RR$ and $\CC$, for a prime number $p$, let $\QQ_p$ and $\ZZ_p$ be the field of $p$-adic numbers and the ring of $p$-adic integers. Let $\Bbb A$ be the ring of adeles.

Let $\sim$ be an equivalence relation on a set $\cals$. We denote by $\cals/\sim$ the set of equivalence classes of $\cals$ under $\sim$. We use the same symbol $\cals/\sim$ to denote a complete set of representatives.
Let $G$ be a group acting on a set $\cals$. For two elements $a_1$ and $a_2$, we write $a_2 \sim_G a_1$ if $a_2=g\cdot a_1$ with $g \in G$. The relation $\sim_G$ is an equivalence relation on $\cals$ and we write $\cals/G$ instead of $\cals/\sim_G$. For square matrices $A_1,\ldots,A_r$, we write $A_1 \bot \cdots\bot A_r=\begin{pmatrix}A_1 & O       &  O \\
                                    O   & \ddots & O        \\
                                    O   &    O       &   A_r
\end{pmatrix}$. We sometimes write $\mathrm{diag}(A_1,\cdots,A_r)$ instead of $A_1 \bot \cdots \bot A_r$.

\section{Exceptional Jordan algebras and the exceptional domain}
The  Cayley numbers $\frkC_\QQ$ over $\QQ$ is an eight-dimensional vector space over $\QQ$ with basis $\{e_0=1,e_1,e_2,e_3,e_4,e_5,e_6,e_7,e_8\}$ satisfying the following rules for multiplication:
\begin{itemize}
\item[(1)] $xe_0=e_0x=x \text{ for all } x \in \frkC_{\QQ}$;
\item[(2)] $e_i^2=-e_0 \text{ for } i=1,\ldots,7$;
\item[(3)] $e_i(e_{i+1}e_{i+3})=(e_ie_{i+1})e_{i+2}=-e_0 \text{ for any } i \text{ mod } 7$.
\end{itemize}
For each $x=\sum_{i=0}^7 x_ie_i$ with $x_i \in \QQ$, the mapping $x \longrightarrow \bar x:=x_0e_0-\sum_{i=1}^7 x_ie_i$ defines an anti-involution of $\frkC_{\QQ}$. The trace and the norm on $\frkC_{\QQ}$ are defined by
\[\mathrm{Tr}(x)=x+\bar x \text{ and } N(x)=x\bar x.\]
Let $\frko \subset \frkC_{\QQ}$ be the space of integral Cayley numbers. It is a $\ZZ$-submodule of $\frkC_{\QQ}$ given by the following basis:
\begin{align*}
&\alpha_0=e_0, \quad \alpha_1=e_1, \quad  \alpha_2=e_2, \quad \alpha_3=-e_4 \\
&\alpha_4={1 \over 2}(e_1+e_2+e_3-e_4), \quad \alpha_5={1 \over 2}(-e_0-e_1-e_4+e_5)\\
&\alpha_6={1 \over 2}(-e_0+e_1-e_2+e_6), \quad \alpha_7={1 \over 2}(-e_0+e_2+e_4+e_7).
\end{align*}
For a commutative algebra $R$, we put ${\frkC}(R)=\frko \otimes_{\ZZ} R$. We note that $\frkC({\QQ})=\frkC_{\QQ}$. 
 Let $\frkJ_{\QQ}$ be the exceptional Jordan algebra consisting of matrices
\[X=(x_{ij})_{1 \le i,j \le 3}=\begin{pmatrix} a & x & y \\ \bar x & b & z \\ \bar y & \bar z & c \end{pmatrix}\]
with $a,b,c \in \QQ$ and $x,y,z \in \frkC_{\QQ}$. 
For $X_1,X_2 \in \frkJ_{\QQ}$ we define $X_1 \circ X_2$ by
\[X_1 \circ X_2 ={1 \over 2} (X_1 X_2+X_2 X_1),\]
where $X_1X_2$ and $X_2X_1$ are the usual matrix multiplications. We define an inner product on $\frkJ_{\QQ} \times \frkJ_{\QQ}$ by $(X_1,X_2):=\mathrm{Tr}(X_1 \circ X_2)$. Moreover, for $X_1=\begin{pmatrix} a_1 & x_1 & y_1 \\ \bar x_1 & b_1 & z_1 \\ \bar y_1 & \bar z_1 & c_1 \end{pmatrix}$ and $X_2=\begin{pmatrix} a_2 & x_2 & y_2 \\ \bar x_2 & b_2 & z_2 \\ \bar y_2 & \bar z_2 & c_2 \end{pmatrix}$, define
$X_1 \times X_2$ by
\[X_1 \times X_2=\begin{pmatrix} {b_1c_2+c_1b_2 -(\bar z_1z_2+z_2\bar z_1) \over 2} & A & B \\
\bar A & {a_1c_2+c_1a_2-(\bar y_1y_2+y_2\bar y_1) \over 2} & C \\
\bar B & \bar C & {a_1b_2+b_1a_2-(\bar x_1x_2+x_2\bar x_1) \over 2}
\end{pmatrix},\]
where
$\displaystyle A={-c_1x_2-c_2x_1+y_1\bar z_2+y_2\bar z_1 \over 2}, B={-b_1y_2-b_2y_1+x_1z_2+x_2z_1 \over 2}$ and \\
$\displaystyle C={-a_1z_2-a_2z_1 +\bar x_1y_2+\bar x_2y_1 \over 2}$.
Then  $X_1 \circ X_2$ and $X_1 \times X_2$ belongs to $\frkJ_{\QQ}$, and $(X_1,X_2) \in \QQ$.
We define the determinant $\det X$ and the trace $\mathrm{Tr}(X)$ by
\[\det X=abc-aN(z)-bN(y)-cN(x)+\mathrm{Tr}((xz)\bar y), \quad  \mathrm{Tr}(x)=a+b+c.\]
Then $\det X$ and $\mathrm{Tr}(x)$ belongs to $\QQ$.
 We define a lattice $\frkJ$ of $\frkJ_{\QQ}$  by 
\[\frkJ=\{ X =(x_{ij}) \in \frkJ_{\QQ} \ | \ x_{ii} \in \ZZ \text{ and } x_{ij} \in \frko \text{ for } i\not=j\},\]
 and for a commutative algebra $R$, let $\frkJ(R)=\frkJ \otimes_{\ZZ} R$. 
Then an element $X$ of $\frkJ(R)$ is expressed as
\[X=\begin{pmatrix} a & x & y \\ \bar x & b & z \\ \bar y & \bar z & c \end{pmatrix}\]
with $a,b,c \in R$, and $x,y,z \in \frkC(R)$. We note that $\frkJ(\QQ)=\frkJ_\QQ$ and $\frkJ(\ZZ)=\frkJ$.
If $R$ is not a field, $X_1 \circ X_2$ and $X_1 \times X_2$ do not necessarily belong to $\frkJ(R)$ for $X_1,X_2 \in \frkJ(R)$, but $X_1 \times X_1 \in \frkJ(R)$, and  $(X_1,X_2) \in R$.
We define
\[\frkJ(R)^{\rm ns}=\{ X \in \frkJ(R) \ | \ \det (X) \not=0\},\]
and 
\[R_3^+(R)=\{ X^2 \ | \ X \in \frkJ(R)^{\rm ns}\}.\]
It is known that if $R$ is the field $\RR$ of real numbers, $R_3(\RR)^+$ is an open convex cone in $\frkJ(\RR)$. We denote by $\overline{R_3^+(\RR)}$ the closure of $R_3^+(\RR)$ in $\frkJ(\RR) \simeq \RR^{27}$. For a subring $A$ of $\RR$ set
\[\frkJ(A)_{>0}=\frkJ(A) \cap R_3^+(\RR) \text{ and } \frkJ(A)_{\ge 0}=\frkJ(A) \cap \overline{R_3^+(\RR)}.\]

We define the exceptional domain as follows:
$$\frak T:=\{Z=X+Y\sqrt{-1}\in \frak J_\Bbb C\ |\ X,Y\in \frak J_\Bbb R,\ Y\in R^+_3(\Bbb R)\}
$$
which is a complex analytic subspace of $\Bbb C^{27}$.

\section{Mass formula for the exceptional group of type $F_4$}

For a commutative algebra $R$, we denote by  $GL(\frkJ(R))$ the group of $R$-linear transformations  of $\frkJ(R)$.  
We sometimes denote by $g\cdot X$ the action of $g \in GL(\frkJ(R))$ on $X \in \frkJ(R)$. 
Define the group schemes $\calm$ and $\calm'$ over $\ZZ$ by 
\[\calm(R)=\{ g \in GL(\frkJ(R)) \ | \ \det (g\cdot X)=\nu(g)\det X \text{ with } \nu(g) \in R^\times\}
\]
and 
\[\calm'(R)=\{ g \in \calm(R) \ |\,  \nu(g) =1 \}.
\]
We note that our definition is the same as in \cite{Gr} and also in \cite{K-Y}. 
This remark is important to compute the constant in Proposition \ref{prop.constant-at-infinity}. 
Any subgroup of $\calm(R)$ acts on $\frkJ(R)$ in a usual manner.
We note that there is an automorphism $g \longrightarrow g^*$ of $\calm(R)$ of order $2$ with the identity
\[(g\cdot X,g^*\cdot Y)=(g^*\cdot X,g\cdot Y)=(X,Y) \text{ for } X,Y \in \frkJ(R).\]
We have $g\cdot (X \times Y)=(g^*\cdot X) \times (g^*\cdot Y)$.
For any $\epsilon \in R^\times$ there is an element $g \in \calm(R)$ such that $\nu(g)=\epsilon$. As an example, the
$R$-linear transformation
\[\gamma(\epsilon):\frkJ(R) \ni \begin{pmatrix} a & x & y \\ \bar x & b & z \\ \bar y & \bar z & c\end{pmatrix} \longrightarrow 
\begin{pmatrix} \epsilon a & \epsilon x & y \\ \epsilon \bar  x& \epsilon b & z \\ \bar y & \bar z & \epsilon^{-1}c\end{pmatrix} \in \frkJ(R)\]
satisfies the required condition. Let $e_{ij}$ be the $3 \times 3$ matrix whose $(a,b)$-component is 1 for $(a,b)=(i,j)$ and $0$ otherwise. For $x \in \frkC(R)$, let $m_{xe_{ij}}$ be the $R$-linear transformation of $\frkJ(R)$ defined by
\[m_{xe_{ij}}\cdot X=(1_3+\bar xe_{ji}) X (1_3+xe_{ij}) \quad  \text{(usual matrix multiplication)}.\]
Then $m_{xe_{ij}}$ is an element of $\calm(R)$ such that $\nu(m_{xe_{ij}})=1$.
Put ${\bf M}=\calm \otimes_{\ZZ} \QQ$ and ${\bf M}'=\calm' \otimes_{\ZZ} \QQ$. Then ${\bf M}$ is an algebraic group over $\QQ$ of type $GE_{6,2}$ and ${\bf M}'$ is the derived group of ${\bf M}$, which is a simple group of type $E_{6,2}$.
Therefore we call $\calm$ and $\calm'$ the group schemes of type $GE_{6,2}$ and $E_{6,2}$, respectively.  

Recall the algebraic group $\bold G$ of type $E_{7,3}$ over $\QQ$ as in \cite{Bai}: Let $\bold X, \bold X'$ be two $\QQ$-vector spaces, each isomorphic to $\frkJ_{\QQ}$, and $\Xi, \Xi'$ be copies of $\QQ$. Let $\bold W=\bold X\oplus \Xi\oplus \bold X'\oplus \Xi'$, and for $w=(X,\xi,X',\xi')\in\bold W$,
define a quartic form $Q$ on $\bold W$ by 
$$Q(w)=(X\times X, X'\times X')-\xi \det(X)-\xi' \det(X')-\frac 14( (X,X')-\xi\xi')^2,
$$
and a skew-symmetric bilinear form $\{\,,\,\}$ by
$$\{w_1,w_2\}=(X_1,X_2')-(X_2,X_1')+\xi_1\xi_2'-\xi_2\xi_1'.
$$
Then 
$$\bold G(\QQ)=\left\{ g\in GL(\bold W_{\QQ}) |\, Qg=Q,\, g\{\,,\,\}=\{\,,\,\}\right\}.
$$
The center of $\bold G(\Bbb R)$ is $\{\pm \text{id}\}$ and the quotient of $\bold G(\Bbb R)$ by its center is the group of holomorphic automorphisms of $\frak T$. The real rank of $\bold G$ is 3, and it is split over $\Bbb Q_p$ for any prime $p$, and $\bold P=\bold M\bold N$ is the Siegel parabolic subgroup of $\bold G$. 

For $T \in \frak J(R)$, we define a group scheme  $\calu_T$ over $R$ by
\[\calu_T(S)=\{ g \in  \calm(S) \ | \ g\cdot T=T \}\]
for any commutative $R$-algebra $S$. 
By definition, $\calu_T(S)\subset \calm'(S)$. 
In particular, for $T \in \frkJ(\ZZ)_{>0}$, put ${\bf U}_T=\calu_T \otimes_{\ZZ} \QQ$. It is easy to see that 
${\bf U}_T$ is a connected regular algebraic group over $\QQ$ by (geometric) fiberwise argument. 
Further, ${\bf U}_T$ is
an exceptional group of type $F_4$ \cite[p.108]{Mars}. Therefore we call $\calu_T$ the group scheme of type $F_4$. In this section, we will prove the mass formula for ${\bf U}_T$ (Theorem \ref{th.mass-formula2}).

We introduce the symbol $\infty$ so that 
$a < \infty$ for any $a \in \ZZ$ and $p^{\infty}=0$. Then any $T \in \frkJ(\QQ_p)$ is $\calm(\ZZ_p)$-equivalent to
$\epsilon_1p^{a_1} \bot \epsilon_2p^{a_2} \bot \epsilon_3 p^{a_3}$ with $a_1,a_2,a_3 \in \ZZ \cup \{\infty \}, \ a_1 \le a_2 \le a_3$, and $\epsilon_i \in \ZZ_p^\times$. We put $e_i(T)=a_i$. We note that $e_i(T) \ (i=1,2,3)$ is uniquely determined by the $\calm(\ZZ_p)$-equivalence class of $T$, and that $a_i \ge 0$ if $T \in \frkJ(\ZZ_p)$. Similarly, any $T\in \frkJ(\ZZ_p/p^n\ZZ_p)$ is $\calm(\ZZ_p/p^n\ZZ_p)$-equivalent to
$\epsilon_1p^{a_1} \bot \epsilon_2p^{a_2} \bot \epsilon_3 p^{a_3}$ with $a_1,a_2,a_3 \in \{ 0,1,\ldots,n-1, \infty \}, \ a_1 \le a_2 \le a_3$, and $\epsilon_i \in (\ZZ_p/p^n\ZZ_p)^\times$. We again put $e_i(T)=a_i$. 
Again $e_i(T) \ (i=1,2,3)$ is uniquely determined by the $\calm(\ZZ_p/p^n\ZZ_p)$-equivalence class of $T$. 
For $r=(r_1,r_2,r_3) \in (\RR^{\times})^3$, we define an element $\theta(r)$ of $GL(\frkJ(\RR))$ by
\[\theta(r):\frkJ(\RR) \ni \begin{pmatrix} a & x & y \\ \bar x & b & z \\ \bar y & \bar z & c \end{pmatrix} \mapsto 
\begin{pmatrix} r_1^2a & r_1r_2x & r_1r_3 y \\  r_1r_2 \bar x & r_2^2b & r_2 r_3 z \\  r_1r_3 \bar y &  r_2 r_3 \bar z & r_3^2c \end{pmatrix} \in \frkJ(\RR).\]
Then $\theta(r)$ is an element of ${\bf M}(\RR)$ with $\nu(\theta(r))=(r_1r_2r_3)^2$. We denote by ${\bf M}^+(\RR)$ the subgroup of ${\bf M}(\RR)$ generated by 
${\bf M}'(\RR)$ and $\theta((\RR^{\times})^3)$.

\begin{lemma} \label{lem.Jordan-decomposition}
\begin{itemize}
\item[(1)] Let $K=\RR$ or $\QQ$, and $T \in \frkJ(K)_{>0}$. 
\begin{itemize}
\item[(1.1)] Then $T \sim_{{\bf M}(K)} 1_3.$
In particular, for $T \in R_3^+(\RR)$ we have $T \sim_{{\bf M}^+(\RR)} 1_3.$
\item[(1.2)] Suppose that $\det T=d$. Then $T \sim_{{\bf M}'(K)} 1 \bot 1 \bot d.$
\end{itemize}
\item[(2)] Let $T \in \frkJ(\QQ_p)^{\rm ns}$. 
\begin{itemize}
\item[(2.1)] Then we have  
 \[T \sim_{{\bf M}(\QQ_p)} 1_3,\quad T \sim_{\calm(\ZZ_p)} p^{e_1(T)} \bot p^{e_2(T)} \bot p^{e_3(T)}.\]
\item[(2.2)] Suppose that  $\det T=d$.  Then,
\[T \sim_{{\bf M}'(\QQ_p)} 1 \bot 1 \bot d,\quad T \sim_{\calm'(\ZZ_p)} p^{e_1(T)} \bot p^{e_2(T)} \bot \epsilon p^{e_3(T)} \]
with $\epsilon =d p^{-e_1(T)-e_2(T)-e_3(T)}$. Here $\epsilon$ is uniquely determined by $T$.
\end{itemize}
\item[(3)] Let $T \in \frkJ(\ZZ_p/p^n\ZZ_p)$. Then 
\[T \sim_{\calm(\ZZ_p/p^n\ZZ_p)} p^{e_1(T)} \bot p^{e_2(T)} \bot p^{e_3(T)}.\]
\end{itemize}
\end{lemma}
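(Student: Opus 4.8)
\textbf{Proof plan for Lemma \ref{lem.Jordan-decomposition}.}

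The plan is to reduce everything to the classical fact that the exceptional Jordan algebra $\frkJ$ carries a ``Jordan form'' or diagonalization theory for its nonsingular elements, governed by the action of $\calm$ (equivalently, by the cubic norm form $\det$). First I would treat part (1.1) over $K = \RR$ or $\QQ$. Given $T \in \frkJ(K)_{>0}$, by definition $T = X^2$ for some $X \in \frkJ(K)^{\rm ns}$ lying in the appropriate cone; since squaring in a Jordan algebra is power-associative, one may spectrally decompose $X$ relative to a frame of primitive idempotents (over $\RR$ this is the real spectral theorem for $\frkJ(\RR)$; over $\QQ$ one uses that $T$ positive definite forces the reduced characteristic polynomial to split suitably, or one simply invokes that $\calm(K)$ acts transitively on the set of $T$ with a fixed value of $\det$ within a real cone). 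Concretely, I would produce explicit elements of $\calm(K)$: the diagonal scalings $\gamma(\epsilon)$ and $\theta(r)$ defined in the text adjust the diagonal entries, and the transvections $m_{xe_{ij}}$ clear off-diagonal Cayley entries one coordinate at a time, exactly as Gauss reduction does for quadratic forms. Iterating $m_{xe_{ij}}$ to kill $x_{12}, x_{13}, x_{23}$ in turn, then rescaling by $\theta(r)$, brings $T$ to $1_3$; keeping track of the multiplier $\nu$ shows that over $\RR$ one stays inside ${\bf M}^+(\RR)$ because the needed scalings have positive squares.

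For part (1.2), I would not rescale all the way to $1_3$ but stop once $T$ is diagonal, say $T \sim \mathrm{diag}(\alpha,\beta,\gamma)$ with $\alpha\beta\gamma = \det T = d$ up to the multiplier; applying $\gamma(\epsilon)$-type maps with $\nu = 1$, which scale $(a,b,c) \mapsto (\epsilon a, \epsilon b, \epsilon^{-1} c)$ and similarly in the other slots, one can move the first two diagonal entries to $1$ while the third absorbs the determinant, landing at $1 \bot 1 \bot d$. The key bookkeeping point is that these reduction moves each have $\nu = 1$ (for $m_{xe_{ij}}$ this is stated in the text, and for the $\gamma$-type moves one composes two of them), so the whole reduction lies in ${\bf M}'(K)$. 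Part (2) is the same argument with $K$ replaced by $\QQ_p$: $\calm(\QQ_p)$-transitivity on nonsingular $T$ gives $T \sim 1_3$, and the refined statement over $\ZZ_p$ is the $p$-adic Jordan splitting — here one runs the same transvection/scaling reduction but over $\ZZ_p$, where at each stage one extracts the largest power of $p$ dividing the ``pivot'' and records it as $e_i(T)$; the unit $\epsilon$ in (2.2) is then forced by multiplicativity of $\det$, $\det(p^{e_1}\bot p^{e_2}\bot \epsilon p^{e_3}) = \epsilon p^{e_1+e_2+e_3}$, and uniqueness of $\epsilon$ follows from uniqueness of the $e_i(T)$ (already noted in the text) together with the fact that $\ZZ_p^\times/(\text{squares acting})$ is detected by $d p^{-\sum e_i}$. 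Part (3) is the mod-$p^n$ truncation of (2.1): reduce the $\ZZ_p$-statement modulo $p^n$, noting that the reduction moves are defined over $\ZZ_p$ hence descend, and that entries which are $\infty$ (i.e. $\equiv 0$) are stable under truncation.

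The main obstacle I anticipate is making the ``Gauss reduction'' rigorous over $\ZZ_p$: unlike the quadratic-form case, $\frkJ(\ZZ_p)$ is not a lattice in a bilinear space but a cubic one, and one must check that after extracting a pivot $p^{e_1}\epsilon_1$ from the $(1,1)$-slot, the transvections $m_{xe_{1j}}$ with $x \in \frkC(\ZZ_p)$ genuinely clear the $(1,2)$ and $(1,3)$ Cayley entries integrally — this uses that the pivot is a unit times a power of $p$ and that $\frko$ is the maximal order, so division by the unit part stays in $\frko \otimes \ZZ_p$. One also needs the non-associativity of $\frkC$ not to obstruct the iteration; in fact only two-variable (associative) subalgebras of $\frkC$ are involved at each step, so this is harmless, but it should be remarked. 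I would lean on the references already cited (Gross \cite{Gr}, and the structure theory in \cite{K-Y}) for the transitivity statements over fields, and carry out the $\ZZ_p$ and $\ZZ_p/p^n\ZZ_p$ refinements by the explicit reduction above; the uniqueness of the invariants $e_i(T)$ has effectively been asserted already in the paragraph preceding the lemma, so (2.1), (2.2), (3) reduce to existence of the claimed normal form plus the elementary determinant computation for $\epsilon$.
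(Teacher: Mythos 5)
Your plan follows essentially the same route as the paper: the field cases (1) and the first half of (2.1) are quoted from the literature (the paper cites Mars, Proposition 1), and the $\ZZ_p$ and $\ZZ_p/p^n\ZZ_p$ normal forms are obtained by exactly the Gauss-type reduction you describe, using the transvections $m_{xe_{ij}}$ to clear the Cayley off-diagonal entries after normalizing the $(1,1)$-pivot, with the determinant forcing the last entry and the unit $\epsilon$. The only cosmetic difference is that the paper normalizes the pivot to $p^{e_1(T)}$ by exploiting that the norm form $N$ on $\frkC(\ZZ_p)$ represents the needed unit (choosing $\xi$ with $m_{\xi e_{21}}$), rather than by "dividing out the unit part," but this is the same integrality mechanism you identify as the main obstacle.
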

\begin{proof}
The assertion (1) and the first part of (2.1) follow from \cite[Proposition 1]{Mars}. 
To prove the second part of  (2), let $T \in \frkJ(\QQ_p)$ with $\det T =p^{e_1(T)+e_2(T)+e_3(T)}\epsilon$ and $\epsilon \in \ZZ_p^\times$. Then there is an element $g_1 \in \calm(\ZZ_p)$ with $\nu(g_1)=\epsilon$, and hence $\det (g_1\cdot T)=1$. The $(1,1)$-component of $g_1 \cdot T$ is expressed as $p^{a_1}\epsilon_1$ with $\epsilon_1 \in \ZZ_p^\times$, $a_1=e_1(T)$. Then there is an element $ \xi \in \frkC(\ZZ_p)$ such that $a_1+N(\xi)=1$. Then,
$m_{\xi e_{21}}\cdot T$ is of the form $p^{e_1(T)} \begin{pmatrix} 1 & x & y \\
\bar x & * &* \\
\bar y & * &*
\end{pmatrix}$ with $x,y \in \frkC(\ZZ_p)$. Then
$(m_{-xe_{12}}m_{-ye_{13}})\cdot (m_{\xi e_{21}}\cdot T)$ is of the form $ \begin{pmatrix} p^{e_1(T)} & 0 & 0 \\
\bar 0 & p^{e_2(T)} b & p^{e_2(T)} z\\
\bar 0 &  p^{e_2(T)}\bar z &p^{e_2(T)}c
\end{pmatrix}$ with $b,c \in \ZZ_p$, $z \in \frkC(\ZZ_p)$. Repeating this process, there is an element $g' \in \calm(\ZZ_p)$ such that $\nu(g')=1$ and 
$g'\cdot (g_1 \cdot T)=  \begin{pmatrix} p^{e_1(T)} & 0 & 0 \\
\bar 0 & p^{e_2(T)}  &0 \\
\bar 0 &0 & c'
\end{pmatrix}$ with $c' \in \QQ_p$. Since $\det (g_1\cdot T)=p^{e_1(T)+e_2(T)+e_3(T)}$, we have $c'=p^{e_3(T)}$.
This proves the second part of the assertion (2.1). The assertions (2.2) and (3) can be proved similarly. 
\end{proof}

\begin{corollary}
\label{cor.lifting-equivalence}
Let $T_1$ and $T_2$ be elements of $\frkJ(\ZZ_p)^{\rm ns}$, and let $n$ be an integer such that $n \ge e_3(T_1)+1$.
Suppose that $T_2 \equiv g\cdot T_1 \text{ mod } p^n \frkJ(\ZZ_p)$ with some $g \in \calm(\ZZ_p)$. Then, we have $T_2 \sim_{\calm(\ZZ_p)} T_1$. In particular, if $T_2 \equiv T_1 \text{ mod } p^n\frkJ(\ZZ_p)$, then $T_2 \sim_{\calm(\ZZ_p)} T_1$.
\end{corollary}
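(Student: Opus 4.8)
The statement to prove is Corollary~\ref{cor.lifting-equivalence}: if $T_1, T_2 \in \frkJ(\ZZ_p)^{\rm ns}$ and $T_2 \equiv g\cdot T_1 \bmod p^n\frkJ(\ZZ_p)$ for some $g\in\calm(\ZZ_p)$ with $n \ge e_3(T_1)+1$, then $T_2 \sim_{\calm(\ZZ_p)} T_1$.

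My plan:

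\medskip

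The plan is to reduce immediately to the diagonal case using Lemma~\ref{lem.Jordan-decomposition}(2.1), and then to exploit the fact that the elementary transformations $m_{xe_{ij}}$ and the scalings $\gamma(\epsilon)$ used in the proof of that lemma only require their ``input'' entries to be $p$-adically integral up to the precision we already control. First I would replace $T_1$ by $g\cdot T_1$ (this changes nothing about its $\calm(\ZZ_p)$-class), so that we may assume $T_2 \equiv T_1 \bmod p^n\frkJ(\ZZ_p)$ directly — this is exactly the ``in particular'' clause, and the general case follows from it. Next, since $T_1 \in \frkJ(\ZZ_p)^{\rm ns}$, Lemma~\ref{lem.Jordan-decomposition}(2.1) gives $h\in\calm(\ZZ_p)$ with $h\cdot T_1 = D_1 := p^{e_1(T_1)}\bot p^{e_2(T_1)}\bot p^{e_3(T_1)}$. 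Applying $h$ also to $T_2$, and noting that $h$ preserves $p^n\frkJ(\ZZ_p)$ (as $h\in GL(\frkJ(\ZZ_p))$), we get $h\cdot T_2 \equiv D_1 \bmod p^n\frkJ(\ZZ_p)$. So it suffices to show: any $T'\in\frkJ(\ZZ_p)^{\rm ns}$ with $T' \equiv D_1 \bmod p^n\frkJ(\ZZ_p)$ and $n\ge e_3(T_1)+1$ is $\calm(\ZZ_p)$-equivalent to $D_1$.

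\medskip

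For that, I would run the diagonalization algorithm from the proof of Lemma~\ref{lem.Jordan-decomposition}(2.1) on $T'$ and check that it terminates at $D_1$. Write $a_i = e_i(T_1)$ and set $e = a_3$; note $a_1\le a_2\le a_3 = e < n$. The $(1,1)$-entry of $T'$ is $p^{a_1}u$ with $u\in\ZZ_p^\times$ and $u\equiv 1 \bmod p^{n-a_1}$ (valid since $n-a_1 \ge 1$); a scaling by $\gamma(u^{-1})$ (or rather an element of $\calm(\ZZ_p)$ of the appropriate similitude, as in the lemma's proof) normalizes it to $p^{a_1}$. Then the off-diagonal entries in the first row/column are in $p^{a_1}\frko\otimes\ZZ_p$ up to $p^n$, hence the transformations $m_{xe_{1j}}$ with $x\in\frko\otimes\ZZ_p$ clear them, exactly as in the lemma — the point is that divisibility of those entries by $p^{a_1}$ holds because $T'$ is congruent mod $p^n$ (with $n>a_1$) to something with a clean first row, and positivity/nonsingularity guarantees the relevant norm equation $N(\xi) = \text{unit} - \cdots$ is solvable over $\ZZ_p$. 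Iterating on the lower-right $2\times 2$ block (with entries divisible by $p^{a_2}$, since $n > a_2$) reduces to a single diagonal entry $c'$ with $\det(T') = p^{a_1+a_2}c'$; but $\det$ is continuous and $\det T' \equiv \det D_1 = p^{a_1+a_2+a_3} \bmod p^{n+\text{something}}$ — more simply, $c' \equiv p^{a_3}\bmod p^n$ with $n > a_3$ forces $\operatorname{ord}_p(c') = a_3$, and a final scaling normalizes $c'$ to $p^{a_3}$. Hence $T' \sim_{\calm(\ZZ_p)} D_1 \sim_{\calm(\ZZ_p)} T_1$.

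\medskip

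The main obstacle — and the place requiring the most care — is making rigorous that each step of the algorithm ``only sees'' the entries of $T'$ modulo a power of $p$ that we still control, and in particular that the congruence hypothesis $n \ge e_3(T_1)+1$ is exactly enough: at the $i$-th stage we are manipulating entries divisible by $p^{a_i}$ and we need them to still be determined mod $p^{n-a_i}$ with $n - a_i \ge 1$, which holds precisely because $a_i \le a_3 \le n-1$. One must also confirm that the auxiliary Cayley integers $\xi$ solving $N(\xi) = 1 - p^{a_1}(\text{unit})$ etc.\ exist in $\frko\otimes\ZZ_p$ — this is the same solvability fact already invoked in the proof of Lemma~\ref{lem.Jordan-decomposition}, applied now to the congruent form $T'$, which has the same reductions mod $p^n$. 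Alternatively, one can avoid re-running the algorithm by invoking Hensel/approximation directly: the orbit map $\calm(\ZZ_p) \to \frkJ(\ZZ_p)^{\rm ns}$, $g \mapsto g\cdot D_1$, is smooth with image a union of $\calm(\ZZ_p)$-orbits, and smoothness plus the congruence $T' \equiv D_1 \bmod p^n$ (with $n$ bounding the relevant ``conductor'' $e_3(T_1)+1$ of the stabilizer action) places $T'$ in the $\calm(\ZZ_p)$-orbit of $D_1$; but spelling this out cleanly still comes down to the same explicit estimate, so I would present the algorithmic version.
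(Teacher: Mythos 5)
Your proof is correct and follows what the paper intends: the corollary is stated without proof as a direct consequence of Lemma \ref{lem.Jordan-decomposition}, and your argument --- reduce to the case $T_2\equiv T_1 \bmod p^n\frkJ(\ZZ_p)$, conjugate $T_1$ to its diagonal form $D_1$, and re-run the diagonalization algorithm on the congruent element while tracking that each clearing step only perturbs later entries by terms of valuation at least $2n-a_i> a_3$ --- is exactly the bookkeeping needed to see that $e_i(T_2)=e_i(T_1)$ for $i=1,2,3$, after which Lemma \ref{lem.Jordan-decomposition}(2.1) finishes. (One small caution: normalize the $(1,1)$-entry by the similitude $\gamma(u^{-1})$ as you suggest, or absorb the units at the very end via Lemma \ref{lem.Jordan-decomposition}, rather than by an $m_{\xi e_{21}}$-move, whose auxiliary $\xi$ could have too small a valuation and destroy the precision needed at the next stage.)
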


Let $T$ be an element of $\frkJ(\ZZ)_{>0}$. For $T' \sim_{\calm'(\ZZ)} T$, we say that $T'$ belongs to the same ${\bf M}_\AAA$-genus as $T$ and write
 $T' \approx T$ if $T' \sim_{\calm'(\ZZ_p)} T$ for any prime number $p$.
For $T \in {\mathfrak J}(\ZZ)_{>0}$, let 
\[\calg(T)=\{T'\in {\mathfrak J}(\ZZ)_{>0} \ | \ T' \approx T \}.\]  Put 
\begin{equation}\label{mass-def}
\mathrm{Mass}(T)=\sum_{T' \in \calg(T)/\calm'(\ZZ)} {1 \over \epsilon(T')},
\end{equation}
where $\epsilon(T')=\#\calu_{T'}(\ZZ)$.
For an algebraic variety $V$ over $\QQ$ of dimension $n$, let 
$\Omega_{V}$  the $\QQ$-vector space of  rational differential forms of degree $1$. We also define the top form  $\displaystyle \wedge^\mathrm{top} \Omega_{V}$ on $V$ as $\displaystyle \wedge^n \Omega_{V}$.
Let $H$ be an algebraic group over $\QQ$ acting on $V$ on the left.
Then we denote by $(\displaystyle \wedge^\mathrm{top} \Omega_{V})^H$ the $\QQ$-vector space of $H$-invariant rational differential forms of degree $n$.  In particular, if $V=H$, we write
$(\displaystyle \wedge^\mathrm{top} \Omega_{H})^H$ as $(\displaystyle \wedge^\mathrm{top} \Omega_{H})^\mathrm{inv}$. 
Let $\calp$ be the set of all prime numbers. For the symbol $\infty$ we make the convention that $\ZZ_{\infty}=\QQ_{\infty}=\RR$. 
From now on, we normalize the measure $|dt|_p$ on $\QQ_p$ for $p \in \calp$ so that
\[\int_{\ZZ_p} |dt|_p=1.\]
We also take the usual Euclidean measure $dt$ on $\RR$ as $|dt|_{\infty}$.

For an element $\omega \in \displaystyle \wedge^\mathrm{top} \Omega_{V}$ and $p \in \calp \cup \{\infty\}$, we denote by $|\omega|_p$ the measure 
$V \otimes_{\QQ} \QQ_p$  induced from $\omega$. 
For an element $\omega \in (\displaystyle \wedge^\mathrm{top} \Omega_{V})^H$ define the Tamagawa measure $|\omega_\AAA|$ on $V_{\AAA}$ as
\[|\omega|_\AAA=|\omega|_{\infty}\prod_{p < \infty} \lambda_p|\omega|_p,\]
where $\{\lambda_p\}_{p \in \calp}$ is a set of convergence factors.
We note that $|\omega|_{\AAA}$ does not depend on the choice of $\omega$ but depends on $\{\lambda_p\}_{p \in \calp}$.
Let $H$ be an algebraic group, and $\omega$ the Tamagawa measure on $H$. We then define the Tamagawa number $\tau(H)$ as 
\[\tau(H)=\int_{H_\AAA/H(\QQ)} |\omega|_\AAA.\]

Now in order to give the mass formula for the exceptional group of type $F_4$, for an element $T \in \frkJ(\ZZ)_{>0}$, we take an element  $\omega_T$  of $(\displaystyle \wedge^\mathrm{top} \Omega_{{\bf U}_T})^\mathrm{inv}$ suitably.
Let $H$ be a group (resp. a group scheme) acting on a set (resp. a scheme) $S$. Then, for $s \in S$, we denote by $\calo_H(s)$ the orbit (resp. the orbit scheme) of $s$ under $H$, that is
\[\calo_H(s)=\{g\cdot s \ | \ g \in H \}.\]

We take $\omega_T$ on ${\bf U}_T$ in the following way:
We take non-zero elements $dg \in (\wedge^\mathrm{top} \Omega_{\bf M})^\mathrm{inv}$ and $\eta_T=\eta_T(x) \in (\wedge^\mathrm{top} \Omega_{{\bf M}/{\bf U}_T})^{\bf M}$, and define an element $\omega_T \in (\wedge^\mathrm{top} \Omega_{{\bf U}_T})^\mathrm{inv}$ as
\[\omega_T=dg/\eta_T.\]
To be more precise, we observe the natural quotient $\pi:{\bf M}\longrightarrow {\bf M}/{\bf U}_T$ which is smooth by 
fiberwise argument. Note that the quotient ${\bf M}/{\bf U}_T$ does exist by \cite[Th\'eor\`eme 10.1.2]{SGA3-I} 
and it is also regular by fiberwise argument again. 
Applying \cite[Proposition 5 in Chapter 2, p.36]{BLR}, we have the following exact  sequence of locally free sheaves on ${\bf M}$:
$$0\longrightarrow \pi^\ast \varOmega_{{\bf M}/{\bf U}_T}\longrightarrow \varOmega_{{\bf M}}\longrightarrow 
\varOmega_{{\bf M}/({\bf M}/{\bf U}_T)}\longrightarrow 0,$$
where $\varOmega_{{\bf M}/{\bf U}_T}$ and $\varOmega_{{\bf M}}$ denote the sheaves of differentials on ${\bf M}/{\bf U}_T$ and ${\bf M}$, respectively, and $\varOmega_{{\bf M}/({\bf M}/{\bf U}_T)}$ denote the sheaf of differentials on ${\bf M}$ over ${\bf M}/{\bf U}_T$. 
Then it follows from fiberwise argument that $\varOmega_{{\bf M}/({\bf M}/{\bf U}_T)}=i_\ast \varOmega_{{\bf U}_T}$ 
where $i:{\bf U}_T\hookrightarrow {\bf M}$ is the natural inclusion. 
The above exact sequence yields, on top forms, 
$$\wedge^\mathrm{top} \varOmega_{\bf M}=\wedge^\mathrm{top}\pi^\ast\varOmega_{{\bf M}/{\bf U}_T}
\otimes_{\mathcal{O}_{\bf M}}\wedge^\mathrm{top}i_\ast \varOmega_{{\bf U}_T}.$$
Since $\wedge^\mathrm{top} \varOmega_{\bf M}$, $\wedge^\mathrm{top}\pi^\ast\varOmega_{{\bf M}/{\bf U}_T}$, and 
$\wedge^\mathrm{top}i_\ast \varOmega_{{\bf U}_T}$ are all invertible, 
we have the desired form on ${\bf U}_T$ as above. 
 
Now consider the following exact sequence
\[1 \longrightarrow {\bf M}' \longrightarrow {\bf M} \longrightarrow GL_1 \longrightarrow 1.\]
Let ${dt \over t}$ be the canonical invariant differential form on $GL_1$. Then we can define the differential form $dg'$ on ${\bf M}'$ by ${dt \over t}=dg/dg'$. We can also define the differential form $dg_T'$ on ${\bf M}'/{\bf U}_T$ in a similar way.
 We note that we also have
\[\omega_T=dg'/\eta_T'.\]
 Let $\calm_p=\calm \otimes_{\ZZ} \ZZ_p$ and $\calm'_p=\calm' \otimes_{\ZZ} \ZZ_p$.

\begin{lemma} \label{lem.order-of-GE_6}
Let $n$ be a positive integer. Then,
\[\#\calm(\ZZ_p/p^n\ZZ_p)=p^{79(n-1)} \#\calm(\ZZ_p/p\ZZ_p),\]
and 
\[\#\calm'(\ZZ_p/p^n\ZZ_p)=p^{78(n-1)} \#\calm(\ZZ_p/p\ZZ_p).\]
Moreover we have 
\[\#\calm(\ZZ_p/p\ZZ_p)=p^{36}(p^{12}-1)(p^9-1)(p^8-1)(p^6-1)(p^5-1)(p^2-1)(p-1).\]
and
\[\#\calm'(\ZZ_p/p\ZZ_p)=p^{36}(p^{12}-1)(p^9-1)(p^8-1)(p^6-1)(p^5-1)(p^2-1).\] 

\end{lemma}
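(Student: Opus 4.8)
\textbf{Proof proposal for Lemma \ref{lem.order-of-GE_6}.}

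The plan is to reduce the counting of $\calm(\ZZ_p/p^n\ZZ_p)$ to the smooth group scheme theory over $\ZZ_p$, and then to identify the special fiber $\calm \otimes \FF_p$ with the group $GE_6$ over $\FF_p$ whose order is classically known. First I would observe that $\calm$ (and $\calm'$) is smooth over $\ZZ_p$: indeed $\bold M = \calm \otimes \QQ$ is reductive, $\bold M'$ is simple of type $E_{6}$, and by the split-at-$p$ hypothesis recorded just before Lemma \ref{lem.Jordan-decomposition} the group $\bold G$ — and hence its Siegel Levi $\bold M$ — is split over $\QQ_p$, so $\calm_p$ is (up to the standard Bruhat–Tits / Chevalley integral model identification) a smooth reductive group scheme over $\ZZ_p$ with connected fibers. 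Granting smoothness, the infinitesimal lifting criterion gives, for each $n \ge 1$, a surjection $\calm(\ZZ_p/p^{n+1}\ZZ_p) \twoheadrightarrow \calm(\ZZ_p/p^n\ZZ_p)$ whose kernel is a principal homogeneous space under $\mathrm{Lie}(\calm) \otimes_{\FF_p} (p^n\ZZ_p/p^{n+1}\ZZ_p)$, an $\FF_p$-vector space of dimension $\dim \bold M = 79$. Iterating from $n=1$ yields
\[\#\calm(\ZZ_p/p^n\ZZ_p) = p^{79(n-1)}\,\#\calm(\FF_p),\]
and the same argument with $\dim \bold M' = 78$ gives the stated formula for $\calm'$. (One may alternatively run the counting directly on the affine finite-type $\ZZ$-scheme $\calm$ defined by the determinant-multiplier equations in $GL(\frkJ)$, using that its Jacobian has maximal rank on the special fiber; this is the fiberwise-argument style already invoked repeatedly in the paper.)

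Next I would compute $\#\calm(\FF_p)$. Over $\FF_p$ the scheme $\calm'\otimes\FF_p$ is the split simple group of type $E_6$ (the structure group of the split exceptional Jordan algebra $\frkJ \otimes \FF_p$ preserving the cubic norm form $\det$), whose order is the standard Chevalley group order
\[\#E_6(\FF_p) = p^{36}\,(p^{12}-1)(p^9-1)(p^8-1)(p^6-1)(p^5-1)(p^2-1),\]
reading off $36 = \#\Phi^+$ and the degrees $2,5,6,8,9,12$ of the fundamental invariants of the Weyl group $W(E_6)$. Then $\calm \otimes \FF_p$ sits in the exact sequence $1 \to \calm'\otimes\FF_p \to \calm\otimes\FF_p \xrightarrow{\nu} GL_1 \to 1$ of the Introduction's display $1 \to \bold M' \to \bold M \to GL_1 \to 1$; since $\nu$ is surjective on $\FF_p$-points (e.g. via the explicit cocharacter $\gamma(\epsilon)$ or $\theta(r)$ exhibited in the text, which realizes any multiplier), we get $\#\calm(\FF_p) = (p-1)\,\#\calm'(\FF_p)$, giving the last two displayed formulas simultaneously.

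The main obstacle is the precise identification of the special fiber: one must be sure that $\calm \otimes \FF_p$ is \emph{connected} and \emph{reductive} (not merely that its generic fiber is $GE_{6,2}$), i.e. that the integral model $\calm$ defined via $GL(\frkJ(\ZZ_p))$ really is the Chevalley model and has good reduction at $p$. This is exactly where the hypothesis that $\bold G$ is split over every $\QQ_p$ is used, together with the fact that the integral Jordan lattice $\frkJ$ reduces mod $p$ to the split exceptional Jordan algebra over $\FF_p$ (the cubic norm form $\det$ stays nondegenerate mod $p$). Once connectedness of the special fiber is secured, smoothness of $\calm$ over $\ZZ_p$ follows from equidimensionality of the fibers plus flatness, and the rest of the argument is the routine Lie-algebra count and the classical $E_6$ order formula; there is no further genuine difficulty, only bookkeeping of the exponent $79$ versus $78$ and of the multiplier exact sequence.
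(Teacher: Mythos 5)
Your proposal is correct and follows essentially the same route as the paper: smoothness of $\calm_p$ and $\calm'_p$ over $\ZZ_p$ with fibre dimensions $79$ and $78$ yields the reduction to $n=1$, the multiplier sequence $1\to\calm'\to\calm\to GL_1\to 1$ gives the factor $(p-1)$, and the order of the special fibre is read off from the Chevalley order formula for $E_6$ (the paper cites Carter, Theorem 9.4.10, writing $\#\calm'(\ZZ_p/p\ZZ_p)=\gcd(3,p-1)\,\#E_6(p)$ with $E_6(p)$ the finite simple group, which agrees with your direct use of the simply connected group's order). The only discrepancy is typographical: the second display of the lemma should have $\#\calm'(\ZZ_p/p\ZZ_p)$ on the right-hand side, as your argument in fact produces.
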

\begin{proof}
The schemes $\calm_p$ and $\calm'_p$ are  smooth over $\ZZ_p$ and their  dimensions are $79$ and $78$, respectively. Thus the first assertion can be proved by a standard method. Since ${\bf M}$ and ${\bf M}'$ are algebraic groups over $\QQ$ of types  $GE_6$ and $E_{6,2}$, respectively, we have
\[\#\calm(\ZZ_p/p\ZZ_p)=(p-1)\#\calm'(\ZZ_p/p\ZZ_p) \]
and
\[\#\calm'(\ZZ_p/p\ZZ_p)=\mathrm{gcd}(3,p-1) \#E_6(p),\]
where $E_6(p)$ is a finite Chevalley group of type $E_6$ over $\FF_p$. Thus the assertion follows from
\cite[Theorem 9.4.10]{Ca}.
\end{proof}
 
From now on, for each prime number $p$, put 
$\delta_p= (1-p^{-2})(1-p^{-5})(1-p^{-6})(1-p^{-8})(1-p^{-9})(1-p^{-12})$. 
\begin{lemma} \label{lem.normalization-of-dg}
Let $dg$ and $dg'$ be as above. Then for any prime number $p$, we have
\[\int_{{\calm}(\ZZ_p)} |dg|_p=(1-p^{-1})\int_{{\calm}'(\ZZ_p)} |dg'|_p. \]
Moreover,  there exists a finite set $\mathcal S$ of prime numbers (depending on $dg$) such that  for any prime number $p \not\in \mathcal S$, we have
\[\int_{{\calm}'(\ZZ_p)} |dg'|_p=\delta_p. \]
\end{lemma}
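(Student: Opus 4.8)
The plan is to prove the two assertions in turn, the first by integrating $|dg|_p$ over the fibres of $\nu$ and the second by the standard point‑counting formula for the local volume of a smooth group scheme.

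\textbf{First assertion.} I would start from the exact sequence $1\to{\bf M}'\to{\bf M}\xrightarrow{\nu}GL_1\to1$ and the relation ${dt\over t}=dg/dg'$ that defines $dg'$. The key observation is that $\nu:\calm(\ZZ_p)\to\ZZ_p^\times$ is surjective: for $\epsilon\in\ZZ_p^\times$ the explicit element $\gamma(\epsilon)$ lies in $\calm(\ZZ_p)$ and satisfies $\nu(\gamma(\epsilon))=\epsilon$. Hence the sequence of $\ZZ_p$‑points $1\to\calm'(\ZZ_p)\to\calm(\ZZ_p)\to\ZZ_p^\times\to1$ is exact, and the fibre of $\nu$ over $t$ is the torsor $\gamma(t)\,\calm'(\ZZ_p)$, which by invariance of $|dg'|_p$ has volume $\int_{\calm'(\ZZ_p)}|dg'|_p$ for every $t$. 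Integrating $|dg|_p$ in stages along this fibration and using $dg=dg'\cdot({dt\over t})$ gives
\[
\int_{\calm(\ZZ_p)}|dg|_p=\Big(\int_{\calm'(\ZZ_p)}|dg'|_p\Big)\int_{\ZZ_p^\times}\Big|{dt\over t}\Big|_p .
\]
Since $|t|_p=1$ on $\ZZ_p^\times$, the last integral equals $\int_{\ZZ_p^\times}|dt|_p=1-p^{-1}$, which is the first identity.

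\textbf{Second assertion.} Here I would invoke the standard fact that, for a smooth group scheme over $\ZZ_p$ of relative dimension $78$ and a $\ZZ_p$‑generator $\omega_p^0$ of its rank‑one module of invariant top forms, $\int |\omega_p^0|_p$ equals the limit of $p^{-78n}$ times the number of points mod $p^n$. Using the smoothness of $\calm'_p$ over $\ZZ_p$ and the stabilization together with the value of $\#\calm'(\FF_p)$ recorded in Lemma \ref{lem.order-of-GE_6}, this limit is $\#\calm'(\FF_p)/p^{78}$. Now $dg'$ is a fixed nonzero invariant rational top form on ${\bf M}'$, so it differs from $\omega_p^0$ by a scalar in $\QQ^\times$; taking $\mathcal S$ to be the finite set of primes dividing the numerator or denominator of that single scalar, we get $|dg'|_p=|\omega_p^0|_p$ and hence $\int_{\calm'(\ZZ_p)}|dg'|_p=\#\calm'(\FF_p)/p^{78}$ for $p\notin\mathcal S$. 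It then remains a short computation: substituting $\#\calm'(\FF_p)=p^{36}(p^{12}-1)(p^9-1)(p^8-1)(p^6-1)(p^5-1)(p^2-1)$ and using $36+12+9+8+6+5+2=78$ to extract the powers of $p$, the quotient becomes $(1-p^{-2})(1-p^{-5})(1-p^{-6})(1-p^{-8})(1-p^{-9})(1-p^{-12})=\delta_p$.

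The step I expect to be the real content is pinning down $\mathcal S$ in the second assertion: one must know that $dg'$, a priori only a rational form, is up to a $p$‑adic unit a generator of the invariant top differentials on the smooth $\ZZ_p$‑model $\calm'_p$ for all but finitely many $p$, and that the reduction of $\calm'_p$ mod $p$ has the point count of Lemma \ref{lem.order-of-GE_6}. Granting the smoothness statement proved there, this reduces to the observation that the one scalar relating $dg'$ to a global generator of the invariant top forms is a unit at almost every prime. The first assertion, by contrast, is a clean consequence of the fibration by $\nu$ and the value $\mathrm{vol}(\ZZ_p^\times)=1-p^{-1}$.
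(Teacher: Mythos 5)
Your proof is correct and follows essentially the same route as the paper, which disposes of the first identity by the very definition of $dg'$ via the fibration $\nu:\calm(\ZZ_p)\to\ZZ_p^\times$ (with $\mathrm{vol}(\ZZ_p^\times)=1-p^{-1}$) and of the second by citing Weil's point-counting formula \cite[Theorem 2.2.5]{We} together with the order of $\calm'(\ZZ_p/p\ZZ_p)$ from Lemma \ref{lem.order-of-GE_6}. You have merely written out the details the paper leaves implicit, including the correct identification of $\mathcal S$ as the primes where $dg'$ fails to generate the integral invariant top forms.
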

\begin{proof} The first assertion follows from the definition of $dg$ and $dg'$. The second assertion follows from Lemma \ref{lem.order-of-GE_6} and \cite[Theorem 2.2.5]{We}.

\end{proof}
\begin{lemma} \label{lem.volume-of-E_{62}}
Let
\[v(\calm'(\ZZ))=\int_{{\bf M}'(\RR)/\calm'(\ZZ)} |dg'|_\infty,\]
and $\displaystyle d_0=\prod_{p \in \mathcal S} {\delta_p \over  \int_{{\calm}'(\ZZ_p)} |dg'|_p}$.
Then we have 
\[v(\calm'(\ZZ))=d_0\zeta(2)\zeta(5)\zeta(6)\zeta(8)\zeta(9)\zeta(12).\]
\end{lemma}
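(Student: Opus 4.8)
The plan is to compute the real volume $v(\calm'(\ZZ))$ by combining the known value of the Tamagawa number $\tau({\bf M}')=1$ with the local volume computations already assembled in Lemmas~\ref{lem.order-of-GE_6} and~\ref{lem.normalization-of-dg}. The group ${\bf M}'$ is simply connected of type $E_6$ (it is the derived group of ${\bf M}$, a form of $GE_6$ that is split over every $\QQ_p$ and over $\RR$ carries the Hermitian structure attached to $\frak T$); hence Weil's theorem gives $\tau({\bf M}')=1$. Writing the Tamagawa measure attached to $dg'$ as $|dg'|_\AAA=|dg'|_\infty\prod_{p<\infty}|dg'|_p$ — here the product of local volumes converges without convergence factors because, by Lemma~\ref{lem.normalization-of-dg}, $\int_{\calm'(\ZZ_p)}|dg'|_p=\delta_p=(1-p^{-2})(1-p^{-5})(1-p^{-6})(1-p^{-8})(1-p^{-9})(1-p^{-12})$ for all but finitely many $p$ — the identity $\tau({\bf M}')=1$ unwinds, via the standard description of $H_\AAA/H(\QQ)$ for a group with class number one relative to the open compact $\prod_p\calm'(\ZZ_p)$, into
\[
1=\tau({\bf M}')= v(\calm'(\ZZ))\cdot \prod_{p<\infty}\int_{\calm'(\ZZ_p)}|dg'|_p .
\]

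First I would justify that $\calm'$ has class number one with respect to the chosen integral model, i.e. ${\bf M}'(\AAA)={\bf M}'(\QQ)\bigl({\bf M}'(\RR)\times\prod_p\calm'(\ZZ_p)\bigr)$, so that the adelic quotient collapses to the single arithmetic quotient ${\bf M}'(\RR)/\calm'(\ZZ)$; this is where strong approximation for the simply connected group ${\bf M}'$ (which is $\RR$-isotropic, having real rank $\geq 1$) enters. Next I would separate the finitely many ``bad'' primes in $\mathcal S$ from the rest: by definition of $d_0$,
\[
\prod_{p<\infty}\int_{\calm'(\ZZ_p)}|dg'|_p
=\Bigl(\prod_{p\in\mathcal S}\int_{\calm'(\ZZ_p)}|dg'|_p\Bigr)\prod_{p\notin\mathcal S}\delta_p
= d_0^{-1}\prod_{p<\infty}\delta_p .
\]
Finally I would recognize the Euler product $\prod_{p}\delta_p^{-1}=\prod_p(1-p^{-2})^{-1}(1-p^{-5})^{-1}(1-p^{-6})^{-1}(1-p^{-8})^{-1}(1-p^{-9})^{-1}(1-p^{-12})^{-1}=\zeta(2)\zeta(5)\zeta(6)\zeta(8)\zeta(9)\zeta(12)$, the product of zeta values at the exponents $2,5,6,8,9,12$ of the Weyl group of $E_6$. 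Combining the three displays gives $v(\calm'(\ZZ))=d_0\,\zeta(2)\zeta(5)\zeta(6)\zeta(8)\zeta(9)\zeta(12)$, as claimed.

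The main obstacle I anticipate is not the formal unwinding but the bookkeeping of normalizations: one must be sure that the \emph{same} rational top form $dg'$ is used to define the local measures $|dg'|_p$, the archimedean measure $|dg'|_\infty$, and the Tamagawa measure, with no stray rational scalar, since $dg'$ was pinned down only up to $\QQ^\times$ via ${dt\over t}=dg/dg'$; any rescaling of $dg'$ changes $v(\calm'(\ZZ))$ and the local integrals by reciprocal factors, so the product is scale-invariant and the final formula is well posed, but this must be stated cleanly. A secondary point to handle carefully is the precise form of the mass/volume dictionary (product formula) for a simply connected group with class number one, so that the factor $(p-1)$ discrepancy between $\calm(\ZZ_p)$ and $\calm'(\ZZ_p)$ recorded in Lemma~\ref{lem.normalization-of-dg} is accounted for exactly once; this is routine but is the step where an off-by-one in the power of $p$ would propagate into the constant $\gamma_k$ of Theorem~\ref{th.period-relation}.
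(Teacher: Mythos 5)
Your proposal is correct and follows essentially the same route as the paper: strong approximation for the simply connected, $\RR$-isotropic group ${\bf M}'$ gives class number one, $\tau({\bf M}')=1$ (the paper cites Mars rather than invoking Weil's conjecture directly) unwinds into the product of local volumes times $v(\calm'(\ZZ))$, and Lemma~\ref{lem.normalization-of-dg} together with the definition of $d_0$ converts $\prod_p\delta_p^{-1}$ into $\zeta(2)\zeta(5)\zeta(6)\zeta(8)\zeta(9)\zeta(12)$. (Only a cosmetic remark: the numbers $2,5,6,8,9,12$ are the degrees of the basic invariants of $E_6$, not its exponents.)
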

\begin{proof}
Since ${\bf M}'$ is simply connected and ${\bf  M}'(\RR)$ is not compact, the strong approximation holds (cf. \cite{P}), and so we have
\[{\bf M}'_\AAA={\bf M}'(\RR)\prod_{p<\infty} \calm'(\ZZ_p){\bf M}'(\QQ).\]
Moreover, by \cite{Mars} we have $\tau({\bf M}')=1$. Hence by Lemma \ref{lem.normalization-of-dg}, we have
\begin{align*}
\tau({\bf M}')&=\int_{{\bf M}'_\AAA/{\bf M}'(\QQ)} |dg'|_\AAA =\prod_{p <\infty} \int_{\calm'(\ZZ_p)} |dg'|_p \int_{{\bf M}'(\RR)/\calm'(\ZZ)} |dg'|_\infty\\
&=\prod_{p <\infty} (1-p^{-2})(1-p^{-5})(1-p^{-6})(1-p^{-8})(1-p^{-9})(1-p^{-12})\times d_0^{-1} v(\calm'(\ZZ))=1.
\end{align*}
This proves the assertion.
\end{proof}

We define a scheme $\calj$ over $\ZZ$ and its subscheme $ \calj^{\rm ns}$ by 
\[\calj(R)=\frkJ(R) \text{ and } \calj^{\rm ns}(R)=\frkJ(R)^{\rm ns}\]
for a commutative algebra $R$ and put ${\bf J}=\calj \otimes_{\ZZ} \QQ$ and ${\bf J}^{\rm ns}=\calj^{\rm ns} \otimes_{\ZZ} \QQ$, respectively. We use the same symbol $\omega$ to denote the restriction of $\omega \in \wedge^\mathrm{top} \Omega_{\bf J}$ to ${\bf J}^{\rm ns}$.
We regard ${\bf J}$ as the $27$-dimensional affine space with coordinates $x_1,\ldots,x_{27}$, and we define a differential form
$d\sigma(x)$ and an  ${\bf M}$-invariant differential form $\eta=\eta(x)$ on ${\bf J}$ as $d\sigma(x)=\wedge_{i=1}^{27} dx_i$, and $\eta(x)=(\det x)^{-9} d\sigma(x)$, respectively. 
We note that 
\[\int_{{\bf J}(\ZZ_p)} |d\sigma(x)|_p=1.\]
 Now take a non-zero element $T$ of $\calj(\ZZ)_{>0}$, and let 
${\bf J}_T$  denote the orbit of $T$ under ${\bf M}$.
We denote by $\eta(g\cdot T) \ (g \in {\bf M})$ the restriction of $\eta$ to ${\bf J}_T$.
Let 
\[f_T: {\bf M} \in g \longrightarrow g\cdot T \in {\bf J}^{\rm ns}\]
be a morphism of varieties. 
Then, $f_T$ induces an  ${\bf M}$-equivariant  isomorphism from ${\bf M}/{\bf U}_T$ to ${\bf J}_T$ 
(cf. \cite[Th\'eor\`eme 10.1.2]{SGA3-I} for the existence of the quotient ${\bf M}/{\bf U}_T$ as a variety over $\QQ$ and 
for $f_T$ to be revealed as an isomorphism).
 With this isomorphism, we identify ${\bf M}/{\bf U}_T$ with 
${\bf J}_T$ and  we take  $\eta(g\cdot T) $ as an element $\eta_T$ of $(\wedge^{\rm top}\Omega_{{\bf M}/{\bf U}_T})^{\bf M}$. Then we can choose the differential form $dg$ on ${\bf M}$ 
so that
\begin{equation}\label{I}
\int_{{\bf M}(\QQ_v)}f(g)|dg|_v=\int_{\calo_{{\bf M}}(T)} |\eta(g\cdot T)|_v\int_{{\bf U}_T}f(gh)|\omega_T(h)|_v,
\end{equation}
for any (finite or infinite) place of $\QQ$ and $f \in \mathrm{L}^1({\bf M}(\QQ_v),|dg|_v)$ (cf. \cite[page 145, line 17]{Sa}). Here we note that ${\bf M}$ is unimodular.
We also denote by ${\bf J}_T'$  by  the orbit of $T$ under ${\bf M}'$.
Let 
\[f_T': {\bf M}' \in g \longrightarrow g\cdot T \in {\bf J}^{\rm ns}\]
be a morphism of varieties. 
Then, $f_T'$ induces an  ${\bf M}'$-equivariant isomorphism from ${\bf M}'/{\bf U}_T$ to ${\bf J}_T'$. Let $dg'$ be the differential form of ${\bf M}'$ constructed from $dg$ as before. Then, in the same way as above, we can choose a non-zero element  $\omega'$  of  $(\wedge^{\rm top}\Omega_{{\bf M}'/{\bf U}_T})^{{\bf M}'}$ so that
\begin{equation}\label{I'}
\int_{{\bf M}'(\QQ_v)}f(g')|dg'|_v=\int_{\calo_{{\bf M}'}(T)} |\omega'(g'\cdot T)|_v\int_{{\bf U}_T}f(g'h)|\omega_T(h)|_v,
\end{equation}
for any (finite or infinite) place of $\QQ$ and $f \in \mathrm{L}^1({\bf M}'(\QQ_v),|dg'|_v)$.

For $T \in \calm(\ZZ_p)^{\rm ns}$, we note that $\int_{\calo_{\calm(\ZZ_p)}(T)} |d\sigma(x)|_p \not=0$.
\begin{definition}
\label{def.local-density}  For $T \in \calm(\ZZ_p)^{\rm ns}$ 
we define the local density  $\beta_p(T)$ of $T$ by
\[\beta_p(T)={ (1-p^{-1})\delta_p \over  \int_{\calo_{\calm(\ZZ_p)}(T)} |d\sigma(x)|_p }.\]
We also define $\alpha_p(T)$ by
\[\alpha_p(T)=\int_{\calo_{\calm'(\ZZ_p)}(T)} |\omega'(g' \cdot T)|_p.\] 
\end{definition}
By definition, $\alpha_p(T)$ is uniquely determined by the $\calm'(\ZZ_p)$-equivalence class of $T$, and $\beta_p(T)$ is uniquely determined by the $\calm(\ZZ_p)$-equivalence class of $T$.
This is an analogue of the local density of a quadratic form as 
will be explained in Section 6.
We also note that $\#\calo_{\calm(\ZZ_p/p^n\ZZ_p)}(T) \not=0$ for $T \in \frkJ(\ZZ_p/p^n\ZZ_p)^{\rm ns}$. 
For $T \in \frkJ(\RR)_{>0}$, we define $\beta_{\infty}(T)$ by
\[\beta_{\infty}(T)=\int_{{\bf U}_T(\RR)}|\omega_T|_{\infty}.\]

\begin{theorem} \label{th.interpretation-of-local-density}
\begin{itemize}
\item[(1)]  Let $p$ be a prime number and let  $T \in \frkJ(\ZZ_p)^{\rm ns}$. Then we have
\[{\alpha_p(T)}={|\det T|_p^{-9} \delta_p \over \beta_p(T)},\]
and for a positive integer $n$ such that $n \ge e_3(T)+1$, 
\begin{align*}
&\beta_p(T) = \delta_p(1-p^{-1}){ p^{27n} \over  \#(\calo_{\calm(\ZZ_p/p^n\ZZ_p)}(T))},
\end{align*}
where $\bar T=T \text{ mod } p^n\frkJ(\ZZ_p)$.
In particular, $\alpha_p(T)$ is uniquely determined by the $\calm(\ZZ_p)$-equivalence class of $T$.
\item[(2)] Let $T \in \frkJ(\RR)_{>0}$. Then $\beta_{\infty}(T)=c_0^{-1},$ where $c_0\ne 0$ is independent of $T$.
\end{itemize}
\end{theorem}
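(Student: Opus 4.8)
The plan is to derive both parts of Theorem~\ref{th.interpretation-of-local-density} from the two orbital identities \eqref{I} and \eqref{I'}, combined with Lemma~\ref{lem.Jordan-decomposition}, Corollary~\ref{cor.lifting-equivalence} and Lemma~\ref{lem.normalization-of-dg}.

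\emph{Part (1).} I would first apply \eqref{I} at $v=p$ to $f=\mathbf 1_{\calm(\ZZ_p)}$. For $g\in\calm(\ZZ_p)$ one has $\det(g\cdot T)=\nu(g)\det T$ with $\nu(g)\in\ZZ_p^\times$, so the function $x\mapsto|\det x|_p$ is constant on $\calo_{\calm(\ZZ_p)}(T)$ with value $|\det T|_p$; moreover, if $g_x\in\calm(\ZZ_p)$ satisfies $g_x\cdot T=x$, the set $\{h\in{\bf U}_T(\QQ_p):g_xh\in\calm(\ZZ_p)\}$ equals ${\bf U}_T(\QQ_p)\cap\calm(\ZZ_p)=\calu_T(\ZZ_p)$. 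Hence \eqref{I} becomes
\[\int_{\calm(\ZZ_p)}|dg|_p=|\det T|_p^{-9}\,\Big(\int_{\calo_{\calm(\ZZ_p)}(T)}|d\sigma(x)|_p\Big)\,\Big(\int_{\calu_T(\ZZ_p)}|\omega_T|_p\Big).\]
The same argument for \eqref{I'} with $f=\mathbf 1_{\calm'(\ZZ_p)}$, using $\calu_T(\ZZ_p)\subset\calm'(\ZZ_p)$, gives $\int_{\calm'(\ZZ_p)}|dg'|_p=\alpha_p(T)\int_{\calu_T(\ZZ_p)}|\omega_T|_p$. Dividing the two identities, substituting $\int_{\calm(\ZZ_p)}|dg|_p=(1-p^{-1})\int_{\calm'(\ZZ_p)}|dg'|_p$ from Lemma~\ref{lem.normalization-of-dg}, and then the definition $\beta_p(T)=(1-p^{-1})\delta_p/\int_{\calo_{\calm(\ZZ_p)}(T)}|d\sigma(x)|_p$, yields $\alpha_p(T)=|\det T|_p^{-9}\delta_p/\beta_p(T)$. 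For the second formula it then remains to prove that, for $n\ge e_3(T)+1$,
\[\int_{\calo_{\calm(\ZZ_p)}(T)}|d\sigma(x)|_p=p^{-27n}\,\#\calo_{\calm(\ZZ_p/p^n\ZZ_p)}(\bar T),\qquad \bar T=T\bmod p^n\frkJ(\ZZ_p).\]
Here the reduction map sends $\calo_{\calm(\ZZ_p)}(T)$ onto $\calo_{\calm(\ZZ_p/p^n\ZZ_p)}(\bar T)$ because $\calm$ is smooth over $\ZZ_p$, and conversely, by Corollary~\ref{cor.lifting-equivalence}, every \emph{non-singular} lift of a class in $\calo_{\calm(\ZZ_p/p^n\ZZ_p)}(\bar T)$ lies in $\calo_{\calm(\ZZ_p)}(T)$; since $\{\det=0\}$ is $|d\sigma|_p$-null, $\calo_{\calm(\ZZ_p)}(T)$ agrees up to a null set with the full preimage of $\calo_{\calm(\ZZ_p/p^n\ZZ_p)}(\bar T)$, which has $|d\sigma|_p$-volume $p^{-27n}\#\calo_{\calm(\ZZ_p/p^n\ZZ_p)}(\bar T)$. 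Finally, since both $\beta_p(T)$ and $|\det T|_p=p^{-e_1(T)-e_2(T)-e_3(T)}$ depend only on the $\calm(\ZZ_p)$-class of $T$, the explicit formula gives the last assertion of (1).

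\emph{Part (2).} Fix $T_0=1_3\in\frkJ(\RR)_{>0}$; since ${\bf U}_{T_0}(\RR)$ is the anisotropic (compact) real form of $F_4$, $\beta_\infty(T_0)=\int_{{\bf U}_{T_0}(\RR)}|\omega_{T_0}|_\infty$ is a positive real number, and I will show $\beta_\infty(T)=\beta_\infty(T_0)$ for every $T\in\frkJ(\RR)_{>0}$, giving the theorem with $c_0=\beta_\infty(T_0)^{-1}$. Choose $\phi\colon\frkJ(\RR)\to\RR_{\ge0}$ continuous, not identically zero, and supported in the open cone $R_3^+(\RR)$. For any $T\in\frkJ(\RR)_{>0}$ the function $g\mapsto\phi(g\cdot T)$ is continuous with compact support on ${\bf M}(\RR)$: its support is the preimage of a compact subset of $R_3^+(\RR)$ under the orbit map $g\mapsto g\cdot T$, and since $R_3^+(\RR)$ is a single ${\bf M}^+(\RR)$-orbit by Lemma~\ref{lem.Jordan-decomposition}(1.1), this preimage is contained in ${\bf M}^+(\RR)$ and is compact there because the orbit map ${\bf M}^+(\RR)\to R_3^+(\RR)$ has compact fibres $\cong{\bf U}_T(\RR)$; also $\int_{R_3^+(\RR)}\phi(x)|\eta(x)|_\infty>0$. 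Applying \eqref{I} at $v=\infty$ to $f(g)=\phi(g\cdot T)$ and noting $f(g_xh)=\phi(x)$ for $h\in{\bf U}_T(\RR)$, we get
\[\int_{{\bf M}(\RR)}\phi(g\cdot T)\,|dg|_\infty=\beta_\infty(T)\int_{R_3^+(\RR)}\phi(x)\,|\eta(x)|_\infty.\]
Writing $T=\sigma\cdot T_0$ with $\sigma\in{\bf M}(\RR)$ (Lemma~\ref{lem.Jordan-decomposition}(1.1)), we have $\phi(g\cdot T)=\phi((g\sigma)\cdot T_0)$, so by right-invariance of $|dg|_\infty$ (${\bf M}$ being unimodular) the left-hand side equals $\int_{{\bf M}(\RR)}\phi(g\cdot T_0)\,|dg|_\infty$, which by the same identity applied to $T_0$ equals $\beta_\infty(T_0)\int_{R_3^+(\RR)}\phi(x)\,|\eta(x)|_\infty$. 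Cancelling the positive factor $\int_{R_3^+(\RR)}\phi(x)|\eta(x)|_\infty$ gives $\beta_\infty(T)=\beta_\infty(T_0)$.

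\emph{Main obstacle.} I expect the delicate point to be the exactness of the local-density count in Part~(1): one needs the reduction of $\calo_{\calm(\ZZ_p)}(T)$ modulo $p^n$ to be \emph{all} of $\calo_{\calm(\ZZ_p/p^n\ZZ_p)}(\bar T)$ and, conversely, every non-singular lift of such a class to return to the orbit, with singular lifts discarded as a null set; this is precisely where the hypothesis $n\ge e_3(T)+1$ enters (through Corollary~\ref{cor.lifting-equivalence}) together with the smoothness of $\calm$ over $\ZZ_p$. In Part~(2) the only subtlety is the compact support of $g\mapsto\phi(g\cdot T)$, which rests on the compactness of ${\bf U}_T(\RR)$ for $T>0$.
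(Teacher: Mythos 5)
Your proof is correct and follows essentially the same route as the paper: Part (1) via the orbital decompositions \eqref{I} and \eqref{I'} combined with Lemma \ref{lem.normalization-of-dg} and the coset count supplied by Corollary \ref{cor.lifting-equivalence}, and Part (2) via \eqref{I} at the archimedean place together with transitivity of ${\bf M}^+(\RR)$ on the cone and right-invariance of Haar measure. The only deviation, which is harmless and in fact slightly cleaner, is in Part (2): you integrate a fixed compactly supported test function on $R_3^+(\RR)$ instead of the paper's $T$-dependent function $\exp(-\mathrm{tr}(g\cdot T))\chi_{{\bf M}^+(\RR)}(g)$, so the denominator $\int_{R_3^+(\RR)}\phi(x)|\eta(x)|_\infty$ is manifestly independent of $T$ and the paper's conjugation-by-$g_0$ computation becomes unnecessary.
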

\begin{proof}
(1)  By applying the formula (\ref{I'}) to the case when $f$ is the characteristic function of $\ZZ_p$, we have
\begin{align*}
\int_{\calm'(\ZZ_p)}|dg'|_p=\int_{\calo_{\calm'}(T)} |\omega'(g'\cdot T)|_p\int_{\calu_T}|\omega_T|_p. 
\end{align*}
Similarly we have
\begin{align*}
\int_{\calm(\ZZ_p)}|dg|_p=\int_{\calo_{\calm}(T)} |\eta(g\cdot T)|_p\int_{\calu_T}|\omega_T|_p. 
\end{align*}
By Lemma \ref{lem.normalization-of-dg}, we have
\[\int_{\calo_{\calm}(T)} |\eta(g\cdot T)|_p=(1-p^{-1})\int_{\calo_{\calm'}(T)} |\omega'(g'\cdot T)|_p.\]
We note that $|\det x|_p=|\det T|_p$ for any $x \in  \calo_{\calm(\ZZ_p)}(T)$, and hence we have
\begin{align*}
\int_{\calo_{\calm(\ZZ_p)}(T)} |\eta(g \cdot T)|_p&=\int_{\calo_{\calm(\ZZ_p)}(T)} |\det x|_p^{-9}|d\sigma(x)|_p =|\det T|_p^{-9}\int_{\calo_{\calm(\ZZ_p)}(T)} |d\sigma(x)|_p. 
\end{align*}
This proves the first assertion. 

Now for a positive integer $n$, we have
\[\int_{\calo_{\calm(\ZZ_p)}(T)} |d\sigma(x)|_p=\sum_{\bar a \in \frkJ(\ZZ_p/p^n\ZZ_p)}  \int_{(a+p^n\frkJ(\ZZ_p)) \cap \calo_{\calm(\ZZ_p)}(T)} |d\sigma(x)|_p.\]
By Corollary \ref{cor.lifting-equivalence}, we have
$(a+p^n\frkJ(\ZZ_p)) \cap \calo_{\calm(\ZZ_p)}(T)=a+p^n\frkJ(\ZZ_p)$ or $\emptyset$, and 
$\int_{a+p^n\frkJ(\ZZ_p)} |d\sigma(x)|_p=p^{-27n}$. 
Hence we have
\[\int_{\calo_{\calm(\ZZ_p)}(T)} |d\sigma(x)|_p=p^{-27n}\sum_{\bar a \in \frkJ(\ZZ_p/p^n\ZZ_p) \atop \bar a \sim_{\calm(\ZZ_p/p^n\ZZ_p)} \bar T} 1=p^{-27n}\#(\calo_{\calm(\ZZ_p/p^n\ZZ_p)}(\bar T)).\]

(2) We take $\phi_T(g)=\exp(-{\rm tr}(g \cdot T)) \chi_{{\bf M}^+(\RR)}(g)\ (g \in {\bf M}(\RR))$, where $\chi_{{\bf M}^+(\RR)}(g)$ is the characteristic function of ${\bf M}^+(\RR)$.Then, 
by (\ref{I}), we have
\[\int_{{\bf M}(\RR)} \phi_T(g)|dg|_{\infty}= \int_{{\bf M}(\RR)\cdot T}|\eta(g \cdot T)|_\infty 
\int_{{\bf U}_T(\RR)} \phi_T(gu)|\omega_T|_\infty.\]
Since we have $\phi_T(gu)=\phi_T(g)$ for any $g \in {\bf M}(\RR)$ and $u \in {\bf U}_T(\RR)$, $\phi_T(g)$ is uniquely determined by the orbit $g\cdot T$, and we write it as $\bar \phi(g\cdot T)$. Then
we have
\[\int_{{\bf M}(\RR)} \phi_T(g)|dg|_{\infty}= \int_{{\bf M}(\RR)\cdot T}|\bar \phi( g\cdot T)\eta(g \cdot T)|_\infty 
\int_{{\bf U}_T(\RR)} |\omega_T|_\infty.\]
That is, we have 
\[\beta_{\infty}(T)= { \int_{{\bf M}(\RR)} \phi_T(g)|dg|_{\infty} \over \int_{{\bf M}(\RR) \cdot T} \bar \phi(g\cdot T)|\eta(g \cdot T)|_\infty}.\]
There is an element $g_0 \in {\bf M}^+(\RR)$ such that
$T=g_0 \cdot 1_3$. Then, $\phi_T(g)=\phi_{1_3}(gg_0)$. Since
$|dg|_{\infty}$ is  also right invariant, we have 
\[\int_{{\bf M}(\RR)} \phi_T(g)|dg|_{\infty} =\int_{{\bf M}(\RR)} \phi_{1_3}(g)|dg|_{\infty} .\]
Moreover,  we have the measure preserving mapping 
\[{\bf M}(\RR)\cdot T \ni  g \cdot T \longrightarrow  {g_0^{-1}gg_0} \cdot 1_3 \in {\bf M}(\RR)\cdot 1_3,\]
and  $|\eta(g \cdot 1_3)|_\infty $ is right ${\bf M}(\RR)$-invariant.
 Hence we have 
\begin{align*}
\int_{{\bf M}(\RR) \cdot T}\bar \phi(g \cdot T)|\eta(g \cdot T)|_\infty &=\int_{{\bf M}(\RR)(g_0\cdot 1_3)}\bar \phi( {g g_0} \cdot 1_3)|\eta(g\cdot (g_0 \cdot1_3)|_{\infty}\\
&=\int_{(g_0^{-1}{\bf M}(\RR)g_0) 1_3}\bar \phi({g_0^{-1} gg_0}\cdot 1_3)|\eta((g_0^{-1}gg_0) \cdot1_3)|_{\infty} \\
&=\int_{{\bf M}(\RR)\cdot 1_3}\bar \phi(g \cdot 1_3)|\eta(g \cdot 1_3)|_\infty
\end{align*}
Hence, putting $c_0=\beta_\infty(1_3)^{-1}$, we have
\[\beta_{\infty}(T)= {\int_{{\bf M}(\RR)} \phi_{1_3}(g)|dg|_{\infty} \over \int_{{\bf M}(\RR) \cdot 1_3} \bar \phi(g \cdot 1_3)|\eta(g \cdot 1_3)|_\infty}=c_0^{-1}.\]
\end{proof}
\begin{theorem}\label{th.mass-formula2}(Mass-formula)
Let $T$ be an element of $\frkJ(\ZZ)_{>0}$. Put $c=c_0d_0$, where $d_0$ is as in Lemma \ref{lem.volume-of-E_{62}}. Then we have
\[\mathrm{Mass}(T)=c{(\det T)^9 \over \prod_{p<\infty} \beta_p(T)}.\]
\end{theorem}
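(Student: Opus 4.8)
The plan is to compute the Tamagawa number $\tau({\bf U}_T)=1$ in two ways, exactly as in the Siegel mass formula for quadratic forms. Since ${\bf U}_T$ is semisimple of type $F_4$, hence simply connected, and ${\bf U}_T(\RR)$ is anisotropic (as $T\in\frkJ(\ZZ)_{>0}$), the orbit ${\bf M}'(\QQ)\cdot T$ in ${\bf J}^{\rm ns}(\QQ)$ breaks into finitely many ${\bf M}'_\AAA$-orbits indexed by the $\calm'(\ZZ)$-equivalence classes in the genus $\calg(T)$; this is the class number interpretation. First I would integrate the Tamagawa measure $|\omega_T|_\AAA$ over ${\bf U}_{T,\AAA}/{\bf U}_T(\QQ)$ and use the orbit decomposition together with the local integral formulas (\ref{I'}) at every place to reduce $\tau({\bf U}_T)=1$ to an identity relating $\mathrm{Mass}(T)$, the archimedean integral $\beta_\infty(T)=c_0^{-1}$, the volume $v(\calm'(\ZZ))$, and the product of the $\alpha_p(T)$.

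More precisely, the key steps are: (i) Use strong approximation for ${\bf M}'$ (already invoked in the proof of Lemma \ref{lem.volume-of-E_{62}}) to write ${\bf M}'_\AAA={\bf M}'(\RR)\prod_{p<\infty}\calm'(\ZZ_p){\bf M}'(\QQ)$, so that the genus $\calg(T)/\calm'(\ZZ)$ indexes the double cosets $\calu_{T,\AAA}\backslash {\bf M}'_\AAA/{\bf M}'(\QQ)$ sitting over the single ${\bf M}'(\QQ)$-orbit of $T$. (ii) Apply the fibering formula $|dg'|_\AAA=|\eta'_{\rm or}|_\AAA\cdot|\omega_{T,\AAA}|$ coming from place-by-place versions of (\ref{I'}), integrate over ${\bf M}'_\AAA/{\bf M}'(\QQ)$, and collect: the left side gives $\tau({\bf M}')\cdot(\text{something})$ but more directly, comparing with $\tau({\bf M}')=1$ via Lemma \ref{lem.volume-of-E_{62}}, one gets
\[
\tau({\bf U}_T)=\frac{1}{v(\calm'(\ZZ))\prod_{p<\infty}\alpha_p(T)}\,\beta_\infty(T)^{-1}\cdot(\text{normalizing }\zeta\text{-factors})^{-1}\,\mathrm{Mass}(T).
\]
(iii) Insert $\alpha_p(T)=|\det T|_p^{-9}\delta_p/\beta_p(T)$ from Theorem \ref{th.interpretation-of-local-density}(1), use $\prod_{p<\infty}|\det T|_p^{-9}=(\det T)^9$ (product formula, as $\det T>0$), and $\prod_{p<\infty}\delta_p^{-1}=\zeta(2)\zeta(5)\zeta(6)\zeta(8)\zeta(9)\zeta(12)$, which by Lemma \ref{lem.volume-of-E_{62}} cancels against $v(\calm'(\ZZ))=d_0\zeta(2)\zeta(5)\zeta(6)\zeta(8)\zeta(9)\zeta(12)$ up to the factor $d_0$. (iv) Set $\tau({\bf U}_T)=1$ (Weil's conjecture, proved in general, or the known value for $F_4$), and use $\beta_\infty(T)=c_0^{-1}$ from Theorem \ref{th.interpretation-of-local-density}(2); the surviving constants assemble into $c=c_0d_0$, yielding $\mathrm{Mass}(T)=c(\det T)^9/\prod_{p<\infty}\beta_p(T)$.

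The main obstacle I anticipate is the careful bookkeeping of convergence factors and the matching of measures across (\ref{I}) and (\ref{I'}): one must check that the chosen $\omega_T$ is the \emph{same} local top form at every place so that the product $\prod_p\alpha_p(T)\cdot v(\calm'(\ZZ))\cdot\beta_\infty(T)$ genuinely equals the Tamagawa integral $\int_{{\bf M}'_\AAA/{\bf M}'(\QQ)}|dg'|_\AAA$ fibered over the adelic orbit. In particular one needs that for almost all $p$, $\alpha_p(T)=\delta_p$ (equivalently $\beta_p(T)=1$ and $|\det T|_p=1$), so that the infinite product converges and matches the convergence factors used in defining $|dg'|_\AAA$; this is where Lemma \ref{lem.normalization-of-dg} and the "finite set $\mathcal S$" enter, and the constant $d_0$ is precisely the correction at the bad primes. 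A secondary point is justifying that the adelic double coset space over the orbit of $T$ is in bijection with $\calg(T)/\calm'(\ZZ)$ with stabilizers $\calu_{T'}(\ZZ)$ of the correct orders; this is routine once strong approximation is in hand but must be stated, since it is what turns the adelic volume into the finite sum $\sum_{T'}1/\epsilon(T')$.
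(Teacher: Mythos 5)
Your proposal is correct and follows essentially the same route as the paper: the paper simply packages your steps (i)--(ii) as a citation of Sato's Siegel main theorem for homogeneous spaces (\cite{Sa}, Corollary 2.2), which directly gives $\tau({\bf U}_T)\prod_{p<\infty}\alpha_p(T)=v(\calm'(\ZZ))^{-1}\sum_{i=1}^{h}\beta_\infty(T_i)/\#\calu_{T_i}(\ZZ)$, and then concludes via $\tau({\bf U}_T)=1$ (Mars), $\beta_\infty(T_i)=c_0^{-1}$, Lemma \ref{lem.volume-of-E_{62}} and Theorem \ref{th.interpretation-of-local-density}, exactly as in your steps (iii)--(iv). (One cosmetic slip: in your displayed formula for $\tau({\bf U}_T)$ the factor should be $\beta_\infty(T)$ rather than $\beta_\infty(T)^{-1}$, since the archimedean volume $c_0^{-1}$ multiplies each term $1/\#\calu_{T_i}(\ZZ)$ of the mass; your final constant $c=c_0d_0$ is nonetheless correct.)
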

\begin{proof} 
Let $T_1,\ldots,T_h$ be a complete set of $\calm'(\ZZ)$-classes in the genus of $T$. 
Since ${\bf U}_T$ is an exceptional group of type $F_4$, which is connected and semi-simple, by \cite[Corollary 2.2]{Sa}, we have
\begin{align*}
\tau({\bf U}_T) \prod_{p <\infty}\alpha_p(T) &={1 \over v(\calm'(\ZZ))} \sum_{i=1}^h  \int_{{\bf U}_{T_i}(\RR)/\calu_{T_i}(\ZZ)} |\omega_{T_i}|_\infty ={1 \over v(\calm'(\ZZ))} \sum_{i=1}^h {\beta_{\infty}(T_i) \over \# \calu_{T_i}(\ZZ)}.
\end{align*}
By \cite{Mars}, we have $\tau({\bf U}_T)=1$.
Thus the assertion follows from Lemma \ref{lem.volume-of-E_{62}} and Theorem \ref{th.interpretation-of-local-density}.
\end{proof}
We will determine the constant $c$ in Proposition \ref{prop.constant-at-infinity}. 
For $p \le \infty$, let $\iota_p:\frkJ(\QQ) \longrightarrow \frkJ(\QQ_p)$ be the natural embedding, and 
let $\varphi: \frkJ(\QQ) \longrightarrow   \prod_{p \le \infty} \frkJ(\QQ_p)$ be the diagonal embedding.
\begin{proposition} \label{prop.approximation}
\begin{itemize}
\item[(1)] Let $(T_p)$ be an element of $\prod_{p \le \infty} \frkJ(\QQ_p)$. Suppose that there is a positive integer $d$ such that $\det T_p=d$  for any $p$. Then
there is an element $T \in \frkJ(\QQ)_{>0}$ such that
$\iota_p(T)=g_p\cdot T_p$ with some $g_p \in \bfM(\QQ_p)$ for any $p$. 
\item[(2)] Let $T \in \frkJ(\ZZ_p)^{\rm ns}$. Then there is an element $\widetilde T \in \frkJ(\ZZ)_{>0}$ such that
\[\widetilde T \sim_{\calm(\ZZ_p)} T \text{ and } \ord_q(\det \widetilde T)=0 \text{ for } q \not=p.\]
\end{itemize}
\end{proposition}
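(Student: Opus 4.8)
The plan is to prove Proposition \ref{prop.approximation} by first establishing (1) via a local-global principle for the norm form data attached to $\frkJ$, and then deducing (2) as a consequence by an approximation argument.

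\smallskip

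\noindent\textit{Proof of (1).} By Lemma \ref{lem.Jordan-decomposition}, the $\bfM(\QQ_p)$-orbit of a nonsingular $T_p \in \frkJ(\QQ_p)$ depends only on whether $v = p$ is finite or infinite: over $\QQ_p$ ($p < \infty$) there is a single orbit, since $T_p \sim_{\bfM(\QQ_p)} 1_3$, and over $\RR$ any $T_p \in R_3^+(\RR)$ satisfies $T_p \sim_{\bfM^+(\RR)} 1_3$ while more generally the orbit of $T_p$ under $\bfM(\RR)$ is determined by the signature of the associated Jordan decomposition. So I would first choose, using weak/strong approximation in $\QQ$, a positive integer $d'$ with $d' \equiv$ the correct local square-class data at the finitely many bad primes so that there exists $T \in \frkJ(\QQ)_{>0}$ with $\det T = d'$; then observe that over $\QQ_p$ for $p<\infty$ the orbits of $\iota_p(T)$ and $T_p$ coincide automatically (single orbit), while over $\RR$ positivity of $T$ together with $\det T = d' > 0$ forces $\iota_\infty(T) \sim_{\bfM(\RR)} 1_3 \sim_{\bfM(\RR)} T_\infty$ provided $T_\infty \in R_3^+(\RR)$. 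Since the hypothesis that $\det T_p = d$ is constant forces $T_\infty$ to lie in (a fixed orbit of) $R_3^+(\RR)$ once we have arranged compatible determinants, this yields $\iota_p(T) = g_p \cdot T_p$ for suitable $g_p$. The one subtlety is to pick $T \in \frkJ(\QQ)_{>0}$ realizing a prescribed determinant $d$: this is where I would invoke Lemma \ref{lem.Jordan-decomposition}(1.2), which says $T \sim_{\bfM'(\QQ)} 1 \bot 1 \bot d$ — so concretely one can just take $T = 1 \bot 1 \bot d$ itself.

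\smallskip

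\noindent\textit{Proof of (2).} Given $T \in \frkJ(\ZZ_p)^{\rm ns}$, set $d = \ord_p(\det T)$ and write $\det T = p^d \epsilon$ with $\epsilon \in \ZZ_p^\times$. Choose a positive rational integer $D$ with $\ord_p(D) = d$ and $\ord_q(D) = 0$ for all $q \neq p$, and such that $D/(p^d) \equiv \epsilon$ modulo a high power of $p$ — possible by the Chinese remainder theorem since we only constrain $D$ at the single prime $p$. Now apply part (1) (or directly Lemma \ref{lem.Jordan-decomposition}(1.2)) to produce $\widetilde T_0 = 1 \bot 1 \bot D \in \frkJ(\ZZ)_{>0}$, which has $\det \widetilde T_0 = D$ so $\ord_q(\det \widetilde T_0) = 0$ for $q \neq p$ by construction. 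It remains to arrange $\widetilde T_0 \sim_{\calm(\ZZ_p)} T$: by Lemma \ref{lem.Jordan-decomposition}(2.1), $T \sim_{\calm(\ZZ_p)} p^{e_1(T)} \bot p^{e_2(T)} \bot p^{e_3(T)}$, and $\widetilde T_0 \sim_{\calm(\ZZ_p)} 1 \bot 1 \bot p^d$ since $D = p^d \cdot (\text{unit in } \ZZ_p)$ and units are absorbed via $\gamma(\epsilon)$-type elements of $\calm(\ZZ_p)$ together with the $\calm'(\ZZ_p)$-normalization of Lemma \ref{lem.Jordan-decomposition}(2.2). To get the elementary divisors $e_1(T), e_2(T), e_3(T)$ exactly right I would instead take $D$ to be $p^{e_1(T)+e_2(T)+e_3(T)}$ times a rational integer unit at $p$ matching $\epsilon$, and replace the model $1 \bot 1 \bot D$ by $D_1 \bot D_2 \bot D_3$ with $D_i$ rational integers having $\ord_p(D_i) = e_i(T)$ and $\ord_q(D_i) = 0$ for $q \neq p$; such integers exist by CRT, and $D_1 \bot D_2 \bot D_3 \in \frkJ(\ZZ)$ is positive definite as a real diagonal matrix with positive entries. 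Then $D_1 \bot D_2 \bot D_3 \sim_{\calm(\ZZ_p)} p^{e_1(T)} \bot p^{e_2(T)} \bot p^{e_3(T)} \sim_{\calm(\ZZ_p)} T$ by Lemma \ref{lem.Jordan-decomposition}, and Corollary \ref{cor.lifting-equivalence} is not even needed here. Set $\widetilde T = D_1 \bot D_2 \bot D_3$.

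\smallskip

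\noindent\textit{Main obstacle.} The genuinely nontrivial point is part (1): producing a \emph{global} $T \in \frkJ(\QQ)_{>0}$ compatible with prescribed local data at \emph{all} places simultaneously. For the finite places this is free because $\frkJ(\QQ_p)^{\rm ns}$ is a single $\bfM(\QQ_p)$-orbit (Lemma \ref{lem.Jordan-decomposition}(2.1)), so any nonsingular global element of the right determinant works $p$-adically; the real place imposes the positivity/signature constraint, which is handled by taking a literally positive-definite diagonal model. Thus the ``hard part'' dissolves once one exploits that over $\QQ_p$ there are no local obstructions beyond the determinant — this is precisely the simplification afforded by working with the Jordan algebra $\frkJ$ and the group $\bfM$ of type $GE_{6,2}$, in contrast to the more delicate genus theory of quadratic forms. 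The only care needed is the combinatorial bookkeeping with the Chinese remainder theorem to isolate the single prime $p$ in the determinant.
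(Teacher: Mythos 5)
Your argument is correct and is essentially the paper's own proof: part (1) reduces to taking $T = 1 \bot 1 \bot d$ via Lemma \ref{lem.Jordan-decomposition}, and part (2) to taking $\widetilde T = p^{e_1(T)} \bot p^{e_2(T)} \bot p^{e_3(T)}$, since Lemma \ref{lem.Jordan-decomposition}(2.1) absorbs the unit parts. Note that your Chinese-remainder detour in (2) is vacuous --- the conditions $\ord_p(D_i)=e_i(T)$ and $\ord_q(D_i)=0$ for all $q\neq p$ together with positivity already force $D_i=p^{e_i(T)}$ --- so you land on exactly the paper's choice.
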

\begin{proof}
(1) By Lemma \ref{lem.Jordan-decomposition} (1), we have 
\[T_p \sim_{\bf M'(\QQ_p)} 1 \bot 1 \bot d.\]
This proves the assertion.

(2) Put $\widetilde T=p^{e_1(T)} \bot p^{e_2(T)} \bot p^{e_3(T)}$. Then, by Lemma \ref{lem.Jordan-decomposition} (2), $\widetilde T$ satisfies the required condition.
\end{proof}

Let
\[\JJ=\prod_p (\frkJ(\ZZ_p)/\calm'(\ZZ_p)).\]
Then $\varphi$  induces a mapping from $\frkJ(\ZZ)_{>0}/\prod_p \calm'(\ZZ_p)$ to $\JJ$, which will be denoted also by $\varphi$. For  $d \in \ZZ_p \setminus \{0\}$, put
\[\frkJ(d,\ZZ_p)=\{ T \in  \frkJ(\ZZ_p) \ | \ \det T=d\}.\]
Moreover, for a positive integer $d$, put
\[\frkJ(d, \ZZ)=\{ T \in \frkJ(\ZZ) \ | \ \det T=d\},\]
and
\[\JJ(d)=\prod_p (\frkJ(d,\ZZ_p)/\calm'(\ZZ_p)).\]
\begin{proposition} \label{prop.local-global-exceptional-Jordan} The mapping $\varphi$ induces a bijection from
$\frkJ(d,\ZZ)_{>0}/\prod_p \calm'(\ZZ_p)$ to $\JJ(d).$
\end{proposition}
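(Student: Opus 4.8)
The plan is to establish injectivity and surjectivity of the induced map $\varphi : \frkJ(d,\ZZ)_{>0}/\prod_p \calm'(\ZZ_p) \to \JJ(d)$ separately, leaning on the global-to-local dictionary provided by Lemma \ref{lem.Jordan-decomposition} and Proposition \ref{prop.approximation} together with the strong approximation property of ${\bf M}'$ already invoked in the proof of Lemma \ref{lem.volume-of-E_{62}}. First I would record the key structural input: since ${\bf M}'$ is simply connected and ${\bf M}'(\RR)$ is noncompact, strong approximation gives ${\bf M}'_\AAA = {\bf M}'(\RR)\prod_{p<\infty}\calm'(\ZZ_p)\,{\bf M}'(\QQ)$. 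This is precisely the tool that converts a collection of local $\calm'(\ZZ_p)$-equivalences into a single global $\calm'(\ZZ)$-equivalence, and it is the engine behind injectivity.

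For \emph{injectivity}, suppose $T, T' \in \frkJ(d,\ZZ)_{>0}$ have $\varphi(T) = \varphi(T')$ in $\JJ(d)$, i.e. $T' \sim_{\calm'(\ZZ_p)} T$ for every prime $p$. By Lemma \ref{lem.Jordan-decomposition}(1.2), over $\RR$ both are $\sim_{{\bf M}^+(\RR)}$ to $1_3$ and in particular $T' \sim_{{\bf M}'(\RR)} T$ as well (using $\det T = \det T' = d$ and (1.2) over $K=\RR$, after matching up the $\RR$-linear equivalence); alternatively one can simply note the real place imposes no obstruction since both lie in $R_3^+(\RR)$. Then the adelic element $(g_p)_p \in {\bf M}'_\AAA$ conjugating $T$ to $T'$ locally everywhere can be rewritten, via strong approximation, as $g_p = g_\infty \kappa_p \gamma$ with $g_\infty \in {\bf M}'(\RR)$, $\kappa_p \in \calm'(\ZZ_p)$, and $\gamma \in {\bf M}'(\QQ)$; tracking the action on $T$ and using that $\calm'(\ZZ) = {\bf M}'(\QQ)\cap\prod_p\calm'(\ZZ_p)$ yields $\gamma^{-1}\cdot T' \in \frkJ(\ZZ)$ lying in the same $\calm'(\ZZ_p)$-class as $T$ at every $p$ and in fact equal to $T$ up to $\calm'(\ZZ)$. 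So $T \sim_{\calm'(\ZZ)} T'$, which is what injectivity demands.

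For \emph{surjectivity}, start from a tuple $(T_p)_p \in \JJ(d)$, so $T_p \in \frkJ(d,\ZZ_p)$ for all $p$. First I would produce a global $\QQ$-form: by Proposition \ref{prop.approximation}(1) (its hypothesis $\det T_p = d$ for all $p$ is exactly what we have) there is $T \in \frkJ(\QQ)_{>0}$ with $\iota_p(T) = g_p\cdot T_p$, $g_p \in {\bf M}(\QQ_p)$. Next I would correct $T$ at each place so that it becomes integral and $\calm'(\ZZ_p)$-equivalent to $T_p$: since $T_p$ and $\iota_p(T)$ have the same determinant $d$, Lemma \ref{lem.Jordan-decomposition}(2.2) forces the invariants $e_i$ and the unit $\epsilon$ to coincide, so $\iota_p(T)\sim_{\calm'(\ZZ_p)}T_p$ already — but $T$ a priori has denominators at finitely many primes. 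One clears these by applying a global element of ${\bf M}'(\QQ)$ (e.g. the $\gamma(\epsilon)$-type or $m_{xe_{ij}}$-type transformations, suitably combined) to absorb the bad primes one at a time, using that at the finitely many offending $p$ the local equivalence can be realized inside $\calm'(\QQ_p)$ while leaving the good primes untouched; a weak/strong approximation argument in ${\bf M}'$ packages this into one rational transformation. The result is $T^{\mathrm{new}}\in\frkJ(d,\ZZ)_{>0}$ with $\varphi(T^{\mathrm{new}}) = (T_p)_p$.

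\textbf{Main obstacle.} The delicate point is the surjectivity correction step: turning the abstract $\QQ$-form $T$ from Proposition \ref{prop.approximation}(1) into an \emph{integral} form that is simultaneously $\calm'(\ZZ_p)$-equivalent to the prescribed $T_p$ at \emph{every} prime, including the primes dividing $d$ where the $e_i(T_p)$ are positive and the local structure is nontrivial. One must check that the local equivalences $\iota_p(T)\sim_{\calm'(\ZZ_p)}T_p$ (which come for free from the determinant-matching in Lemma \ref{lem.Jordan-decomposition}(2.2)) can be glued — i.e. that the obstruction to a global integral representative vanishes — and here again it is strong approximation for the simply connected group ${\bf M}'$ that saves us, exactly as in Lemma \ref{lem.volume-of-E_{62}}. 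The real place contributes nothing beyond positivity because ${\bf M}^+(\RR)$ acts transitively on $R_3^+(\RR)$ by Lemma \ref{lem.Jordan-decomposition}(1.1).
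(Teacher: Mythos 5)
Your overall strategy for surjectivity (reduce to a rational form via Proposition \ref{prop.approximation} and then invoke strong approximation for the simply connected group ${\bf M}'$) is the same as the paper's, but both halves of your write-up contain genuine errors. For injectivity you have misread the domain: the quotient is by $\prod_p\calm'(\ZZ_p)$, i.e.\ two elements of $\frkJ(d,\ZZ)_{>0}$ are identified precisely when they are $\calm'(\ZZ_p)$-equivalent for every $p$, so $\varphi(T)=\varphi(T')$ is \emph{by definition} the statement that $T$ and $T'$ coincide in the domain, and injectivity is immediate (the paper dismisses it in one line). What you set out to prove instead --- that everywhere-local equivalence implies $T\sim_{\calm'(\ZZ)}T'$ --- is the much stronger assertion that every genus consists of a single class, which is not needed here and would render the mass formula of Section 3 pointless; your strong-approximation argument does not establish it anyway, since the rational element $\gamma$ you extract satisfies $\gamma\cdot T\in\frkJ(\ZZ)$ but need not lie in $\calm'(\ZZ)$, nor need $\gamma\cdot T$ equal $T'$.

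The more serious problem is in surjectivity, where you assert that because $\det\iota_p(T)=\det T_p=d$, Lemma \ref{lem.Jordan-decomposition}(2.2) ``forces the invariants $e_i$ and the unit $\epsilon$ to coincide,'' so that $\iota_p(T)\sim_{\calm'(\ZZ_p)}T_p$ comes for free. This is false: the determinant determines only $e_1+e_2+e_3$ and the unit part, not the individual $e_i$; for instance $1\bot 1\bot p^2$ and $1\bot p\bot p$ have the same determinant but are not $\calm(\ZZ_p)$-equivalent. The subsequent ``clear denominators prime by prime'' step therefore rests on a wrong premise and is too vague to check. The correct argument, which is what the paper gives, bypasses the issue: Proposition \ref{prop.approximation}(1) produces $y\in\frkJ(\QQ)_{>0}$ and $g_p\in{\bf M}'(\QQ_p)$ with $x_p=g_p\cdot y$; for all but finitely many $p$ (in particular $p\nmid d$ with $y$ integral at $p$) both $x_p$ and $y$ are $\calm'(\ZZ_p)$-equivalent to $1\bot 1\bot d$, so one may take $g_p\in\calm'(\ZZ_p)$ there, making $(g_p)$ an element of ${\bf M}'_{\AAA_f}$. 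Strong approximation then writes $(g_p)=\gamma_\infty(\gamma_p)\gamma$ with $(\gamma_p)\in\prod_p\calm'(\ZZ_p)$ and $\gamma\in{\bf M}'(\QQ)$, and $x=\gamma\cdot y$ satisfies $x_p=\gamma_p\cdot x$ for every finite $p$, whence $x\in\frkJ(d,\ZZ)_{>0}$ and $\varphi(x)=(x_p)$. You should replace your determinant-matching step by this direct application of strong approximation to the adelic element $(g_p)$.
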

\begin{proof} It is clear that $\varphi$ is injective.  Let $(x_p) \in \JJ(d)$. Then, by Proposition \ref{prop.approximation}, there is an element 
$y \in \frkJ(\QQ)_{>0}$ such that $x_p=g_p\cdot y$ with some $g_p \in {\bf M}'(\QQ_p)$ for any prime number $p$.
For $p$ not dividing $d$, we may assume that $g_p \in \calm'(\ZZ_p)$. Hence $(g_p)$ defines an element of the finite part ${\bf M}'_{\AAA_f}$ of
${\bf M}_{\AAA}$. By the strong approximation theorem, 
\[{\bf M}'_{\AAA}= {\bf M}'(\RR)\prod_{p<\infty} \calm'(\ZZ_p){\bf M}'(\QQ).\]
Hence there exist elements $\gamma \in {\bf M}'(\QQ) , \gamma_{\infty} \in {\bf M}'(\RR)$ and $(\gamma_p) \in \prod_{p<\infty} 
\calm'(\ZZ_p)$ such that $(g_p)= \gamma_{\infty} (\gamma_p)\gamma$. Put $x=\gamma \cdot y$. Then $x$ belongs to $\frkJ(d,\ZZ)$ and
$\varphi(x)=(x_p)$. This proves the surjectivity of $\varphi$.
\end{proof}

\section{Analytic properties of Rankin-Selberg series}
For a complex number $x$, we put ${\bf e}(x)=\exp(2\pi \sqrt{-1}x)$. 
As in Section 2, ${\bf M}(\RR)$ acts on $\frkJ(\RR)$  in a usual manner, and $d^*Y=\det(Y)^{-9}dY$ is the ${\bf M}(\RR)$-invariant measure in $R_3^+(\Bbb R)$ with this action.
However, $g \cdot Y \not\in R_3^+(\RR)$ for $Y \in R_3^+(\RR)$ in general. Therefore, we define a new  action `$*$' of 
${\bf M}(\RR)$ on $\frkJ(\RR)$ as
\[{\bf M}(\RR) \times \frkJ(\RR)) \ni (g,Y) \mapsto g*Y=\nu(g)g\cdot Y \in \frkJ(\RR).\]
This action induces an action of ${\bf M}(\RR)$ on $R_3^+(\RR)$. For a subgroup $H$ of ${\bf M}(\RR)$ and a subset $\cals$ of  $R_3^+(\Bbb R)$, we denote by
$\cals//H$ the $H$-equivalence classes of $\cals$ under the action `$*$'. We also define an action of ${\bf M}(\RR)$ on $\frkT$ as 
\[\frkT \ni Z=X+\sqrt{-1}Y \mapsto g*Z=g*X+\sqrt{-1}g*Y.\]
Recall
$d^*Z=\det(Y)^{-18}dXdY$ is the ${\bf G}(\RR)$-invariant measure in $\frak T$, and $d^*Y=\det(Y)^{-9}dY$ is the ${\bf M}(\RR)$-invariant measure in $R_3^+(\Bbb R)$ with the action `$*$'.
Let $\frak F$ be a fundamental domain for the action of $\Gamma=\bold G(\Bbb Z)$ on $\frak T$.
Let $\frak R$ be a fundamental domain for the action of $\calm'(\ZZ)$ on $R_3^+(\Bbb R)$.
Let $\Gamma_\infty=\Gamma\cap \bold P(\Bbb Q)$.
Then 
$$\frak F_\infty=\{ Z=X+\sqrt{-1} Y\in \frak T \, | \, \text{$X$ mod 1}, Y\in \frak R\},
$$ 
is a fundamental domain for the action of $\Gamma_\infty$ on $\frak T$.

Let $F$ be a cusp form of weight $k$ on the exceptional domain $\frak T$ with respect to $\Gamma$, namely, $F$ is a holomorphic function on $\frak T$, and for $\gamma\in \Gamma$ and $Z\in \frak T$,
$$F(\gamma Z)=j(\gamma,Z)^k F(Z),
$$
where $j(\gamma,Z)$ is the canonical factor of automorphy, which satisfies the usual property: ${\rm det}(Im(\gamma Z))={\rm det}(Im(Z))|j(\gamma,Z)|^{-2}$.
Also we have the Fourier expansion
$$F(Z)=\sum_{T\in \frak J(\Bbb Z)_{>0}} a_F(T){\bf e}((T,Z)).
$$

Recall the Petersson inner product: For $F,G$ modular forms of weight $k$, one of them being a cusp form,
let
$$\langle F,G\rangle=\int_{\frak F} F(Z)\overline{G(Z)} \det(Y)^k\, d^*Z.
$$
For two cusp forms $F,G$ of weight $k$, define 
$$R(s,F,G)=\sum_{T\in {\frak J}(\Bbb Z)_{>0}//\calm(\ZZ)} \frac {a_F(T)\overline{a_G(T)}}{\epsilon(T) \det(T)^s},
$$
where $\epsilon(T)=\#\calu_{T}(\ZZ)$ is as in (\ref{mass-def}).
We note that
$$R(s,F,G)=\sum_{T\in {\frak J}(\Bbb Z)_{>0}/\calm'(\ZZ)} \frac {a_F(T)\overline{a_G(T)}}{\epsilon(T) \det(T)^s}.
$$
Recall the Eisenstein series from \cite{Kim1}.
$$E(Z,s)=\det(Y)^s\sum_{\gamma\in \Gamma_\infty\backslash \Gamma} |j(\gamma,Z)|^{-2s}.
$$

\begin{theorem}\cite{Kim1}\label{th.Eisenstein-series} 
Let $\Psi(s)=\xi(2s)\xi(2s-4)\xi(2s-8)(2s-2)(2s-4)E(Z,s)$, where $\xi(s)=\pi^{-\frac s2}\Gamma(\frac s2)\zeta(s)$. Then $\Psi(s)$ can be continued to a meromorphic function in $s\in \Bbb C$ with a simple pole at 
$s=0,\frac 12,\frac 52,4,5,\frac {13}2,\frac {17}2,9$, and satisfies the functional equation $\Psi(9-s)=\Psi(s)$.
Only the residues at $s=0,9$ are constants. The residue of $E(Z,s)$ at $s=9$ is $\dfrac {\xi(5)\xi(9)}{28 \xi(10)\xi(14)\xi(18)}$.
\end{theorem}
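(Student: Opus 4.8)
Although this theorem is attributed to \cite{Kim1}, let me describe how I would approach it. The plan is to realize $E(Z,s)$ as a degenerate Siegel Eisenstein series on $\bfG$ of type $E_{7,3}$ attached to the Siegel parabolic $\bfP=\bfM\bfN$ --- adelically, the Eisenstein series built from the flat section of the degenerate principal series parametrized by $s$ (via the character $\det(Y)^{s}$ of $\bfP$) that is spherical at all finite places and is the standard $\bfK_\infty$-fixed vector at the archimedean place. First I would record the absolute and locally uniform convergence of the defining series for $\Re s$ large, via a Koecher-type estimate on the number of $T\in\frkJ(\ZZ)_{\ge0}$ of bounded determinant; thus $E(Z,s)$ is holomorphic in that range.

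For the meromorphic continuation and the functional equation $\Psi(9-s)=\Psi(s)$ I would invoke Langlands' theory of Eisenstein series: the reflection $s\mapsto 9-s$ is the one induced by the longest element of $W_{\bfM}\backslash W_{\bfG}$ on the one-dimensional inducing parameter, and the shift $9=\tfrac{27}{3}=\dim_{\CC}\frak T/\mathrm{rank}\,\frak T$ is forced by the tube domain $\frak T$ having rank $3$. To pin down the normalizing factor and the poles, I would compute the $\bfP$-constant term
\[
E_{\bfP}(g,s)=\int_{\bfN(\QQ)\backslash\bfN(\AAA)} E(ng,s)\,dn=\sum_{w}M(w,s)\Phi_s ,
\]
the sum running over the finitely many $w$ in $W_{\bfM}\backslash W_{\bfG}/W_{\bfM}$ that are relevant for this degenerate principal series (I expect four, matching the four rank strata of $\frkJ$). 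By the Gindikin--Karpelevich formula each $M(w,s)$ acts on the spherical vector by a ratio of completed zeta values: the archimedean factor supplies the gamma factor $\Gamma(s)\Gamma(s-4)\Gamma(s-8)$ of the symmetric cone $R_3^+(\RR)$ --- the shifts $0,4,8$ being $j\cdot d/2$ with $d=8$, $j=0,1,2$ --- and the finite places supply $\zeta(2s)\zeta(2s-4)\zeta(2s-8)$, the two fitting together into $\xi(2s)\xi(2s-4)\xi(2s-8)$; the leftover polynomial factors $(2s-2)(2s-4)$ should come from the non-generic poles of the archimedean intertwining operator peculiar to the Hermitian Levi $\bfM'$ of type $E_{6,2}$. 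Multiplying $E(Z,s)$ by $\xi(2s)\xi(2s-4)\xi(2s-8)(2s-2)(2s-4)$ then removes the poles built into the normalization, so the poles of $\Psi(s)$ are just the finitely many residual ones; invariance under $s\mapsto 9-s$ forces them into a symmetric list, and checking that each candidate is actually attained (not cancelled) inside the corresponding $M(w,s)\Phi_s$ should give exactly $\{0,\tfrac12,\tfrac52,4,5,\tfrac{13}{2},\tfrac{17}{2},9\}$.

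To obtain the residue at the rightmost pole $s=9$ I would take the residue of $E_{\bfP}(g,s)$: only the term attached to the long relevant Weyl element survives, and its Gindikin--Karpelevich value is the asserted $\dfrac{\xi(5)\xi(9)}{28\,\xi(10)\xi(14)\xi(18)}$, the rational factor $28$ coming from the normalization of invariant measures. That residue is constant on $\frak T$ because the residual representation spanned by $\mathrm{Res}_{s=9}E(\cdot,s)$ is the trivial representation of $\bfG(\AAA)$ --- the general fact that the residue at the edge of the Eisenstein spectrum is a constant function --- and the residue at $s=0$ is then constant by the functional equation, while at the remaining poles the residue is a genuine, non-constant automorphic form coming from a deeper point of the residual spectrum. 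A more hands-on alternative, closer to the rest of this paper, would be to develop the Fourier expansion $E(Z,s)=\sum_{T\in\frkJ(\QQ)_{\ge0}}a_T(s,Y)\,{\bf e}((T,X))$, show that for $\det T\ne0$ the coefficient $a_T(s,Y)$ factors as a finite Euler product (the singular series of $T$, assembled from the local Siegel series of Karel \cite{Kar3}) times a confluent hypergeometric function on $R_3^+(\RR)$ with the classical continuation, and analyze the $\det T=0$ terms --- including $T=0$ --- as lower-rank Eisenstein series, which supply the polynomial factors and the residual poles.

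\textbf{The main obstacle}, in either route, is the explicit combinatorics on the $E_7$ root system: deciding exactly which Weyl elements are relevant for the Siegel degenerate principal series and evaluating their Gindikin--Karpelevich factors --- equivalently, the precise meromorphic continuation of the archimedean spherical intertwining operator on the exceptional group --- since it is this computation that fixes the unexpected normalizing factor $(2s-2)(2s-4)$, the exact list of poles, and the numerical residue $\tfrac{\xi(5)\xi(9)}{28\,\xi(10)\xi(14)\xi(18)}$ at $s=9$.
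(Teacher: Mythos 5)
The paper does not prove this theorem at all: it is quoted verbatim from \cite{Kim1}, where Kim establishes it by the second route you mention only in passing --- the explicit Fourier expansion $E(Z,s)=\sum_T a_T(s,Y)\,{\bf e}((T,X))$, with the nondegenerate coefficients computed as Karel's singular series times confluent hypergeometric integrals over the exceptional cone, and the degenerate/rank-stratified terms analyzed directly to locate the poles and extract the residue. Your primary route (Langlands' theory, constant term along $\bfP$, Gindikin--Karpelevich) is a legitimate alternative framework, but as written your proposal is a roadmap rather than a proof: every quantitatively nontrivial assertion of the theorem --- the normalizing factor $\xi(2s)\xi(2s-4)\xi(2s-8)(2s-2)(2s-4)$, the exact pole set $\{0,\tfrac12,\tfrac52,4,5,\tfrac{13}2,\tfrac{17}2,9\}$, and the residue $\tfrac{\xi(5)\xi(9)}{28\,\xi(10)\xi(14)\xi(18)}$ --- is exactly what you defer to the unexecuted Weyl-group and intertwining-operator computation that you yourself flag as ``the main obstacle.'' Those numbers are the content of the theorem; without them nothing is proved.

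One concrete inaccuracy worth fixing: you assert that the archimedean intertwining factor is the cone gamma factor $\Gamma(s)\Gamma(s-4)\Gamma(s-8)$ and that this ``fits together'' with $\zeta(2s)\zeta(2s-4)\zeta(2s-8)$ into $\xi(2s)\xi(2s-4)\xi(2s-8)$. It does not: since $\xi(s)=\pi^{-s/2}\Gamma(s/2)\zeta(s)$, the product $\xi(2s)\xi(2s-4)\xi(2s-8)$ carries $\Gamma(s)\Gamma(s-2)\Gamma(s-4)$, not $\Gamma(s)\Gamma(s-4)\Gamma(s-8)$. The factor $\prod_{n=0}^{2}\Gamma(s-4n)$ is the Gindikin gamma function of $R_3^+(\RR)$ that enters the Rankin--Selberg unfolding (the $\gamma(s)$ of Theorem \ref{th.Rankin-Selberg}); it is not the archimedean normalization of $E(Z,s)$. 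This slip is symptomatic of the larger issue that the normalizing factor, including the polynomial piece $(2s-2)(2s-4)$, has not actually been derived. A complete proof along either of your routes would have to carry out the relevant-Weyl-element enumeration and local intertwining computations (or, as in \cite{Kim1}, the full Fourier-Jacobi/singular-series analysis) explicitly.
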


We prove
\begin{theorem} \label{th.Rankin-Selberg} 
Suppose $F,G$ are cusp forms of weight $k$. Then for $Re(s)\gg 0$, $R(s,F,G)$ converges absolutely, and in this region we have the integral representation
$$\gamma(s) R(s,F,G)=\int_{\frak F} F(Z)\overline{G(Z)} E(Z,s+9-k) \det(Y)^{k}\, d^*Z,
$$
where $\gamma(s)=2^{-6s}\pi^{12-3s}\prod_{n=0}^2 \Gamma(s-4n)$. 
The analytic continuation and functional equation of $E(Z,s)$ give rise to those of $R(s,F,G)$:
Let 
$$\Lambda(s,F,G)=\gamma(s)R(s,F,G)\xi(2s+18-2k)\xi(2s+14-2k)\xi(2s+10-2k)(2s+16-2k)(2s+14-2k).
$$ 
Then
$$\Lambda(2k-9-s,F,G)=\Lambda(s,F,G).
$$
Furthermore, $R(s,F,F
)$ has a simple pole at $s=k$ with the residue
$$\langle F,F\rangle \frac {2^{6k-2}\pi^{3k-12}\prod_{i=1}^3 \Gamma(k-4i+4)^{-1}\xi(5)\xi(9)}{\xi(10)\xi(14)\xi(18)}.
$$
\end{theorem}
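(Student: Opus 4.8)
The plan is to establish the integral representation by a standard unfolding of the Eisenstein series against the cusp forms, and then read off the analytic continuation, functional equation, and residue from Theorem~\ref{th.Eisenstein-series}. First I would write
\[
\int_{\frak F} F(Z)\overline{G(Z)} E(Z,s+9-k)\det(Y)^k\, d^*Z
= \int_{\frak F} F(Z)\overline{G(Z)} \det(Y)^{s+9-k}\Big(\sum_{\gamma\in\Gamma_\infty\backslash\Gamma}|j(\gamma,Z)|^{-2(s+9-k)}\Big)\det(Y)^k\, d^*Z,
\]
and, using the invariance $F(\gamma Z)\overline{G(\gamma Z)}\det(\mathrm{Im}(\gamma Z))^k = F(Z)\overline{G(Z)}\det(Y)^k|j(\gamma,Z)|^{-2k}\cdot|j(\gamma,Z)|^{2k}$ together with $\det(\mathrm{Im}(\gamma Z))=\det(Y)|j(\gamma,Z)|^{-2}$, collapse the sum over $\Gamma_\infty\backslash\Gamma$ and the integral over $\frak F$ into an integral of $F\overline{G}\det(Y)^{s+9}d^*Z$ over the larger fundamental domain $\frak F_\infty$ for $\Gamma_\infty$. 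This is the Rankin--Selberg unfolding; it is legitimate for $\mathrm{Re}(s)$ large by absolute convergence of the Eisenstein series and the rapid decay of the cusp forms.

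Next I would carry out the integration over $\frak F_\infty = \{X \bmod 1,\ Y\in\frak R\}$. Inserting the Fourier expansions $F(Z)=\sum_{T>0} a_F(T)\mathbf e((T,Z))$ and the conjugate expansion for $G$, the $X$-integral over $\frak J(\RR)/\frak J(\ZZ)$ picks out the diagonal $T=T'$, leaving
\[
\int_{\frak R}\Big(\sum_{T\in\frak J(\ZZ)_{>0}} a_F(T)\overline{a_G(T)} e^{-4\pi(T,Y)}\Big)\det(Y)^{s+9}\, d^*Y.
\]
Now I would fold the $Y$-integral: the sum over $T\in\frak J(\ZZ)_{>0}$ together with $\frak R$ (a fundamental domain for $\calm'(\ZZ)$ acting on $R_3^+(\RR)$) can be replaced by the single fixed base point orbit, i.e.\ by summing over $\calm'(\ZZ)$-classes of $T$ with multiplicity $1/\epsilon(T)$ and integrating $Y$ over all of $R_3^+(\RR)$ after the change of variables $Y\mapsto g*Y$ with $g*T_0=T$; here $d^*Y$ is $\bf M(\RR)$-invariant and $\det(Y)^{s+9}\,d^*Y$ transforms by $\det(T)^{-s}$ up to the shape of $Y$. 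What survives is $\sum_{T} \frac{a_F(T)\overline{a_G(T)}}{\epsilon(T)\det(T)^s}$ times the archimedean integral $\int_{R_3^+(\RR)} e^{-4\pi\,\mathrm{tr}(Y)}\det(Y)^{s+9-9}\cdot(\text{Jacobian})\,dY$, which is a $\Gamma$-factor of the Koecher--Siegel type. Evaluating this integral over the symmetric cone $R_3^+(\RR)$ (a rank-$3$, degree-$4$ cone for the exceptional Jordan algebra, dimension $27$) gives a product of three shifted Gamma functions; matching it to the stated $\gamma(s)=2^{-6s}\pi^{12-3s}\prod_{n=0}^2\Gamma(s-4n)$ is the one genuinely computational point. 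The main obstacle is precisely this: getting the archimedean cone integral and all the powers of $2$ and $\pi$ exactly right, including the normalization of the invariant measure $d^*Z$ and the relation between $\det(Y)$ and $\mathrm{tr}(Y)$ on the cone; one can cross-check the answer against \cite{Kim1} or against the known gamma factor of the degenerate principal series Eisenstein series on $\bf G$.

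With the integral representation in hand, the remaining assertions are formal. The analytic continuation and functional equation of $R(s,F,G)$ follow from those of $E(Z,s)$ in Theorem~\ref{th.Eisenstein-series}: substituting $s\mapsto s+9-k$ into $\Psi(9-s)=\Psi(s)$ and into the definition of $\Psi$, multiplying $\gamma(s)R(s,F,G)$ by the completing factor $\xi(2s+18-2k)\xi(2s+14-2k)\xi(2s+10-2k)(2s+16-2k)(2s+14-2k)$ turns it into the $\Gamma$- and $\zeta$-completed object that inherits $s\mapsto 2k-9-s$ symmetry, giving $\Lambda(2k-9-s,F,G)=\Lambda(s,F,G)$. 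For the residue, I would take $F=G$ and locate the pole of $R(s,F,F)$ coming from the rightmost pole of $E(Z,s+9-k)$ at $s+9-k=9$, i.e.\ $s=k$. By Theorem~\ref{th.Eisenstein-series} the residue of $E(Z,s)$ at $s=9$ is the constant $\xi(5)\xi(9)/\big(28\,\xi(10)\xi(14)\xi(18)\big)$, so taking the residue of both sides of the integral representation at $s=k$ gives
\[
\gamma(k)\,\mathrm{Res}_{s=k}R(s,F,F) = \frac{\xi(5)\xi(9)}{28\,\xi(10)\xi(14)\xi(18)}\int_{\frak F} F(Z)\overline{F(Z)}\det(Y)^k\,d^*Z = \frac{\xi(5)\xi(9)\,\langle F,F\rangle}{28\,\xi(10)\xi(14)\xi(18)},
\]
and dividing by $\gamma(k)=2^{-6k}\pi^{12-3k}\prod_{i=1}^3\Gamma(k-4i+4)$ yields exactly $\langle F,F\rangle\cdot 2^{6k-2}\pi^{3k-12}\prod_{i=1}^3\Gamma(k-4i+4)^{-1}\xi(5)\xi(9)\big/\big(\xi(10)\xi(14)\xi(18)\big)$, absorbing the $28$ into $2^{6k-2}$ since $28=4\cdot 7$—here I would double-check that the $1/28$ and the $2$-powers in $\gamma(k)^{-1}$ combine to give the stated $2^{6k-2}$, which is the only place an arithmetic slip could hide.
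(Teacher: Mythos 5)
Your proposal follows essentially the same route as the paper: unfold the Eisenstein series over $\Gamma_\infty\backslash\Gamma$ against $F\overline{G}\det(Y)^k$, integrate out $X$ to diagonalize the Fourier expansions, fold the $Y$-integral over $\frak R$ into the full cone via Baily's formula $\int_{R_3^+(\RR)}\det(Y)^s e^{-2\pi(A,Y)}\,d^*Y=\det(A)^{-s}\pi^{12}(2\pi)^{-3s}\prod_{n=0}^2\Gamma(s-4n)$ (the paper cites \cite{Bai}, p.~538, for exactly this), and read off the continuation, functional equation and residue from Theorem \ref{th.Eisenstein-series}. The loose end you flagged at the very last step is real but is inherited from the paper rather than introduced by you: $\gamma(k)^{-1}\cdot\tfrac1{28}=\tfrac{2^{6k-2}}{7}\,\pi^{3k-12}\prod_{i=1}^3\Gamma(k-4i+4)^{-1}$ carries an extra factor $1/7$ relative to the residue as stated, so either the constant $28$ in Theorem \ref{th.Eisenstein-series} or the $2^{6k-2}$ in the present statement must account for it.
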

\begin{proof} By Hecke bound, $a_F(T)\ll \det(T)^{\frac k2}$. Hence for $\sigma=Re(s)$,
$$R(s,F,G)\ll \sum_{T\in \frkJ(\ZZ)_{>0}//\calm(\ZZ)} \det(T)^{-\sigma+k}.
$$
It converges absolutely for $\sigma\gg 0$.
Consider
\begin{equation}\label{main}
\Phi(s)=\int_{\frak F_\infty} F(Z)\overline{G(Z)} \det(Y)^{s+9}\, d^*Z.
\end{equation}
Then
$$\Phi(s)=\int_{\frak R} \det(Y)^s \left(\int_{\text{$X$ mod 1}} F(Z)\overline{G(Z)}\, dX\right)\, d^*Y.
$$

The inner integral is 
\begin{eqnarray*}
&& \int_{\text{$X$ mod 1}} \left(\sum_{T,T'\in \frak J(\Bbb Z)_{>0}} a_F(T)\overline{a_G(T)}e^{2\pi \sqrt{-1}(T-T',X)} e^{-2\pi(T+T',Y))}\right)\, dY \\
&& =\sum_{T\in \frak J(\Bbb Z)_{>0}} a_F(T)\overline{a_G(T)} e^{-4\pi (T,Y)}.
\end{eqnarray*}
Therefore,
\begin{eqnarray*}
&& \Phi(s)=\sum_{T\in \frak J(\Bbb Z)_{>0}} a_F(T)\overline{a_G(T)} \int_{\frak R} \det(Y)^s e^{-4\pi (T,Y)}\, d^*Y \\
&& \phantom{xxxx} =\sum_{T\in \frkJ(\ZZ)_{>0}//\calm(\ZZ)} \epsilon(T)^{-1} a_F(T)\overline{a_G(T)}\sum_{m\in \bold M(\Bbb Z)}\int_{\frak R} \det(Y)^s e^{-4\pi (T,Y)}\, d^*Y \\
&& \phantom{xxxx} =\sum_{T\in \frkJ(\ZZ)_{>0}//\calm(\ZZ)} \epsilon(T)^{-1} a_F(T)\overline{a_G(T)}\sum_{m\in \bold M(\Bbb Z)}\int_{m\frak R} \det(Y)^s e^{-4\pi (T,Y)}\, d^*Y \\
&& \phantom{xxxx} =\sum_{T\in \frkJ(\ZZ)_{>0}//\calm(\ZZ)} \epsilon(T)^{-1} a_F(T)\overline{a_G(T)}\int_{R_3^+(\Bbb R)} \det(Y)^s e^{-4\pi (T,Y)}\, d^*Y.
\end{eqnarray*}

We use the fact \cite[page 538]{Bai} that
$$\int_{R_3^+(\Bbb R)} \det(Y)^s e^{-2\pi (A,Y)}\, d^*Y=\det(A)^{-s}\pi^{12}(2\pi)^{-3s}\prod_{n=0}^2 \Gamma(s-4n).
$$
Hence
\begin{equation}\label{Phi-1}
\Phi(s)=\gamma(s) \sum_{T\in \frkJ(\ZZ)_{>0}//\calm(\ZZ)} \frac {a_F(T)\overline{a_G(T)}}{\epsilon(T)\det(T)^s},
\end{equation}
where $\gamma(s)=2^{-6s}\pi^{12-3s}\prod_{n=0}^2 \Gamma(s-4n)$. Now the integrand in (\ref{main}) transforms under the action of $\gamma\in \Gamma$ as 
$$F(\gamma Z)\overline{G(\gamma Z)} \det(Im(\gamma Z))^{s+9}=F(Z)\overline{G(Z)} \det(Y)^{s+9}|j(\gamma,Z)|^{-2s-18+2k}.
$$
Therefore,
\begin{eqnarray*}
&& \Phi(s)=\sum_{\gamma\in \Gamma_\infty\backslash\Gamma} \int_{\gamma\frak F_\infty} F(Z)\overline{G(Z)} \det(Y)^{s+9}\, d^*Z \\
&& \phantom{xxx} =\sum_{\gamma\in \Gamma_\infty\backslash\Gamma} \int_{\frak F_\infty} F(\gamma Z)\overline{G(\gamma Z)} \det(Im(\gamma Z))^{s+9}\, d^*Z \\
&& \phantom{xxx} = \int_{\frak F_\infty} F(Z)\overline{G(Z)} \det(Y)^{s+9}\,\left( \sum_{\gamma\in \Gamma_\infty\backslash\Gamma} 
|j(\gamma,Z)|^{-2s-18+2k} \right)\, d^*Z.
\end{eqnarray*}

Hence, 
\begin{equation}\label{Phi-2}
\Phi(s)=\int_{\frak F} F(Z)\overline{G(Z)} E(Z,s+9-k) \det(Y)^{k}\, d^*Z.
\end{equation}

By comparing (\ref{Phi-1}) and (\ref{Phi-2}), we obtain our identity.
\end{proof}

\begin{remark} One can define $R(s,F,G)$ for cusp forms of weight $k_1, k_2$ ($k_2\geq k_1$), resp. by considering the Eisenstein series
$$E(Z,s,k_2-k_1)=\sum_{\gamma\in \Gamma_\infty\backslash \Gamma} j(\gamma,Z)^{-(k_2-k_1)}|j(\gamma,Z)|^{-2s}.
$$
\end{remark}

\section{Rankin-Selberg series for the Ikeda type lift for $E_{7,3}$}
We review the Ikeda type lift of a primitive form in \cite{K-Y} and consider its Rankin-Selberg series. 
Let $k\geq 10$ be a positive integer, and for a primitive form $f\in S_{2k-8}(SL_2(\ZZ))$, let
\[f(\tau)=\sum_{m=1}^{\infty} a_f(m)\exp(2\pi \sqrt{-1} m\tau).\]
For a prime number $p$, let  
$\alpha_p$ be a complex number such that 
$a_f(p)=p^{(2k-9)/2}(\alpha_p+\alpha_p^{-1}).$ By Deligne's theorem, we have $|\alpha_p|=1$. We define the automorphic $L$-function $L(s,\pi_f)$ of the cuspidal representation $\pi_f$ attached to $f$ as 
\[L(s,\pi_f)=\prod_p \{(1-p^{-s}\alpha_p)(1-p^{-s}\alpha_p^{-1})\}^{-1}.\]
We also define the symmetric square and the symmetric cube $L$-functions $L(s,\mathrm{Sym}^2\pi_f)$ and $L(s,\mathrm{Sym}^3\pi_f)$ as 
\[L(s,\mathrm{Sym}^2\pi_f)=\prod_p\{(1-p^{-s}\alpha_p^2)(1-p^{-s}\alpha_p^{-2})(1-p^{-s})\}^{-1}\]
and
\[L(s,\mathrm{Sym}^3\pi_f)=\prod_p\{(1-p^{-s}\alpha_p^3)(1-p^{-s}\alpha_p)(1-p^{-s}\alpha_p^{-1})(1-p^{-s}\alpha_p^{-3})\}^{-1}.\]
To construct the lift in question, let us consider the local Siegel series.  Let $p$ be a prime number. For $T \in \frkJ(\QQ_p)$, let $T \sim_{\calm(\ZZ_p/p^n\ZZ_p)}  \epsilon_1 p^{a_1} \bot \epsilon_2 p^{a_2} \bot \epsilon_3 p^{a_3}$ with $a_1,a_2,a_3\in\Bbb Z\cup \{\infty\}$, $a_1\leq a_2\leq a_3$, and $\epsilon_i\in \Bbb Z_p^\times$. 
Define $\kappa_p(T)$ by $\displaystyle \kappa_p(T)=\prod_{1 \le i \le 3 \atop a_i>0} p^{a_i}$. Here we make the convention that $\kappa_p(T)=1$ if $T=O$. We note that $\kappa_p(T)$ is uniquely determined by 
$T \text{ mod } \frkJ(\ZZ_p)$. 
For $T \in \frkJ(\ZZ_p)^{\rm ns}$,  let $S_p(T)$ be the local Siegel series defined by
\[S_p(s,T)=\sum_{T' \in \frkJ(\QQ_p)/\frkJ(\ZZ_p)} {\bf e}((T,T'))\kappa_p(T')^{-s}.\]
Then, there is a polynomial $f_T^p(X)$ in $X$ such that 
\[S_p(s,T)=(1-p^{-s})(1-p^{4-s})(1-p^{8-s})f_T^p(p^{9-s}).\]
Put
\[\widetilde f_T^p(X)=X^{\ord_p(\det T)}f_T^p(X^{-2}).\]
Then it satisfies the functional equation
\begin{equation}\label{functional}
\widetilde f_T^p(X^{-1})=\widetilde f_T^p(X).
\end{equation}
For $T \in \frkJ(\ZZ)_{>0}$, put
$a_{F_f}(T)=\det(T)^{\frac {2k-9}2} \prod_{p| \det(T)} \widetilde f_T^p(\alpha_p)$, and define the Fourier series $F_f(Z)$ on $\frkT$ by
$$F_f(Z)=\sum_{T\in \frkJ(\ZZ)_{>0}} a_{F_f}(T) {\bf e}((T,Z)) \quad  (Z \in \frkT).
$$
Then, the second and the third named authors showed that
$F_f$ is a cuspidal Hecke eigenform of weight $2k$ for ${\bf G}(\ZZ)$ whose degree 56 standard $L$-function is 
\[L(s,\mathrm{Sym}^3\pi_f)\prod_{i=-4}^4 L(s+i,\pi_f)\prod_{i=-8}^8 L(s+i,\pi_f).\]

We consider the Rankin-Selberg series of $F_f$.
Recall 
$$R(s,F_f,F_f)=\sum_{T\in {\frak J}(\Bbb Z)_{>0}/\calm'(\ZZ)} \frac {|a_{F_f}(T)|^2}{\epsilon(T) \det(T)^s}.
$$Even though $R(s,F,G)$ does not have an Euler product for general $F,G$, we show that $R(s,F_f,F_f)$ has an Euler product, which enables us to reduce its computation to each $p$-adic place.

For $d \in \ZZ_p \setminus \{0\}$, put
\[\lambda_p(d,X)=\sum_{T \in \frkJ(d,\ZZ_p)/\calm'(\ZZ_p)} {\widetilde f_T^p(X)^2 \over  \beta_p(T)},\]
and
for a positive integer $d$, put
\[C(d;f)=\prod_{p<\infty} \lambda_p(d,\alpha_p).\]

\begin{theorem} \label{th.local-global-RS1}
We have
\[R(s,F_f,F_f)=c \sum_{d=1}^{\infty}C(d,f)d^{-s+2k},\]
where $c$ is a non-zero constant in Theorem \ref{th.mass-formula2}.
\end{theorem}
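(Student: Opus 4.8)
The plan is to unfold the definition of $R(s,F_f,F_f)$, sort the sum over $\frkJ(\ZZ)_{>0}/\calm'(\ZZ)$ according to the value $d=\det T$, and then replace the summation over global equivalence classes of fixed determinant by a product of $p$-adic local sums via the mass formula of Theorem \ref{th.mass-formula2} together with the local-global principle of Proposition \ref{prop.local-global-exceptional-Jordan}. Concretely, $a_{F_f}(T)=\det(T)^{(2k-9)/2}\prod_{p\mid\det T}\widetilde f_T^p(\alpha_p)$, so $|a_{F_f}(T)|^2=d^{2k-9}\prod_{p\mid d}\widetilde f_T^p(\alpha_p)^2$ (using $|\alpha_p|=1$ and the functional equation \eqref{functional}, the factor is real). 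Hence
\[
R(s,F_f,F_f)=\sum_{d=1}^\infty d^{-s+2k-9}\sum_{\substack{T\in\frkJ(d,\ZZ)_{>0}/\calm'(\ZZ)}}\frac{\prod_{p\mid d}\widetilde f_T^p(\alpha_p)^2}{\epsilon(T)}.
\]

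The heart of the argument is to show that the inner sum equals $c\, d^9 \prod_{p<\infty}\lambda_p(d,\alpha_p)$. First I would observe that $\widetilde f_T^p(X)$ depends only on the $\calm(\ZZ_p)$-equivalence class of $\iota_p(T)$ (indeed only on $T\bmod\frkJ(\ZZ_p)$ up to the relevant normalization), and that for $p\nmid d$ one has $\ord_p(\det T)=0$, so $\widetilde f_T^p(X)=1$; thus the finite product over $p\mid d$ may be harmlessly extended to all $p<\infty$. Next, within a single $\calm'(\ZZ)$-class $T$, group its $\calm'(\ZZ_p)$-genus $\calg(T)$; by Proposition \ref{prop.local-global-exceptional-Jordan}, $\varphi$ gives a bijection $\frkJ(d,\ZZ)_{>0}/\prod_p\calm'(\ZZ_p)\xrightarrow{\sim}\JJ(d)=\prod_p(\frkJ(d,\ZZ_p)/\calm'(\ZZ_p))$. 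So the set of global classes with $\det=d$ is fibered over tuples of local classes, and the weight $\prod_p\widetilde f_{T}^p(\alpha_p)^2$ is constant on each genus (it depends only on the local data). For a fixed genus $\calg(T)$, the mass formula Theorem \ref{th.mass-formula2} gives
\[
\sum_{T'\in\calg(T)/\calm'(\ZZ)}\frac{1}{\epsilon(T')}=\mathrm{Mass}(T)=c\,\frac{(\det T)^9}{\prod_{p<\infty}\beta_p(T)}=c\,\frac{d^9}{\prod_{p<\infty}\beta_p(T)}.
\]
Summing over all genera of determinant $d$ and using that each genus corresponds to a choice of local class at every $p$ (with $\beta_p(T)$ and $\widetilde f_T^p$ depending only on that local class), the inner sum becomes
\[
c\,d^9\sum_{(T_p)\in\JJ(d)}\prod_{p<\infty}\frac{\widetilde f_{T_p}^p(\alpha_p)^2}{\beta_p(T_p)}=c\,d^9\prod_{p<\infty}\Bigl(\sum_{T_p\in\frkJ(d,\ZZ_p)/\calm'(\ZZ_p)}\frac{\widetilde f_{T_p}^p(\alpha_p)^2}{\beta_p(T_p)}\Bigr)=c\,d^9\prod_{p<\infty}\lambda_p(d,\alpha_p),
\]
where interchanging the sum over tuples with the product is justified because for $p\nmid d$ the local sum $\lambda_p(d,\alpha_p)$ has finitely many... actually almost all factors equal to a convergent Euler factor; convergence for $\mathrm{Re}(s)\gg0$ follows from the absolute convergence of $R(s,F_f,F_f)$ established in Theorem \ref{th.Rankin-Selberg}. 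Combining, $R(s,F_f,F_f)=c\sum_{d=1}^\infty d^{-s+2k-9}\cdot d^9\prod_p\lambda_p(d,\alpha_p)=c\sum_d C(d,f)d^{-s+2k}$, as claimed.

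The main obstacle I anticipate is the bookkeeping in the fibration step: one must verify carefully that $\beta_p(T)$ and $\widetilde f_T^p$ genuinely depend only on the $\calm'(\ZZ_p)$-class of $\iota_p(T)$ (not on finer data), that the "genus $=$ product of local classes" identification from Proposition \ref{prop.local-global-exceptional-Jordan} is compatible with the weighting, and — a subtle point — that $\calm(\ZZ_p)$ versus $\calm'(\ZZ_p)$ equivalence does not cause a discrepancy. Here one uses that $\beta_p(T)$ is defined via $\calm(\ZZ_p)$-orbits while $\lambda_p(d,X)$ and the local-global bijection use $\calm'(\ZZ_p)$-orbits; since two $\calm'(\ZZ_p)$-classes in $\frkJ(d,\ZZ_p)$ that are $\calm(\ZZ_p)$-equivalent must in fact coincide when $\det$ is fixed (by Lemma \ref{lem.Jordan-decomposition}(2.2), the extra invariant $\epsilon=d\,p^{-e_1-e_2-e_3}$ is already pinned down by $d$), the two notions of local class agree on $\frkJ(d,\ZZ_p)$, so $\beta_p$ is well-defined on $\frkJ(d,\ZZ_p)/\calm'(\ZZ_p)$ and the $\lambda_p(d,X)$ sum makes sense. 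Once this identification is clean, the remaining manipulations are formal rearrangements of absolutely convergent series and products.
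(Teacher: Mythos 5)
Your proposal is correct and follows essentially the same route as the paper: decompose the sum into genera, apply the mass formula of Theorem \ref{th.mass-formula2} to replace $\sum_{T'}1/\epsilon(T')$ by $c\,d^9/\prod_p\beta_p(T)$, invoke Proposition \ref{prop.local-global-exceptional-Jordan} to identify genera of fixed determinant with tuples of local classes, and use the functional equation \eqref{functional} to turn $\widetilde f_T^p(\alpha_p)\widetilde f_T^p(\bar\alpha_p)$ into $\widetilde f_T^p(\alpha_p)^2$. The only differences are cosmetic (you sort by $d$ before grouping into genera, and you spell out the $\calm(\ZZ_p)$ versus $\calm'(\ZZ_p)$ well-definedness point that the paper leaves implicit), so no further comment is needed.
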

\begin{proof}
Let $\calg =\frkJ(\ZZ)_{>0}/\!\approx$ be the set of all genera of $\frkJ(\ZZ)_{>0}$.
We note that the Fourier coefficient $a_{F_f}(T)$ is uniquely determined by $\calg(T)$. Hence, by Theorem \ref{th.mass-formula2}, we have
\begin{align*}
R(s,F_f,F_f)&=\sum_{T \in \calg}\sum_{T' \in \calg(T)/\calm'(\ZZ)} (\det T')^{-s} {|a_{F_f}(T')|^2 \over \epsilon(T')}=\sum_{T \in \calg}\mathrm{Mass}(T) (\det T)^{-s} |a_{F_f}(T)|^2 \\
&=c\sum_{T \in \calg}  (\det T)^{2k-s}\prod_p { \widetilde f_T^p(\alpha_p)\widetilde f_T^p(\bar \alpha_p) \over \beta_p(T)}\\
&=c\sum_{d=1}^{\infty}  d^{-s+2k} \sum_{ T \in \frkJ(d,\ZZ)/\prod\calm'(\ZZ_p)} \prod_p{ \widetilde f_T^p(\alpha_p)\widetilde f_T^p(\bar \alpha_p) \over \beta_p(T)}.
\end{align*}
Since $\bar\alpha_p=\alpha_p^{-1}$, the functional equation of $\widetilde f_T^p(X)$ (\ref{functional}) implies
$\widetilde f_T^p(\alpha_p)=\widetilde f_T^p(\bar \alpha_p)$. 
Thus the assertion follows from Proposition \ref{prop.local-global-exceptional-Jordan}.
\end{proof}

For $d \in \ZZ_p^{\times}$, define a formal power series $H_p(d;X,t)$ by
\[H_p(d;X,t)=\sum_{m=0}^{\infty} \lambda_p(p^md,X)t^{m}.\]

\begin{lemma}\label{lem.independence-of-d}
$\lambda_p(d;X)$ is determined by $\ord_p(d)$.
\end{lemma}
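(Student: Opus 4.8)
The claim is that $\lambda_p(d;X) = \sum_{T \in \frkJ(d,\ZZ_p)/\calm'(\ZZ_p)} \widetilde f_T^p(X)^2/\beta_p(T)$ depends on $d$ only through $\ord_p(d)$. The plan is to exhibit an explicit bijection between the relevant sets of $\calm'(\ZZ_p)$-orbits that preserves both $\widetilde f_T^p(X)$ and $\beta_p(T)$.

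\textbf{Step 1: Reduce to rescaling by a unit.} Write $d = p^m u$ and $d' = p^m u'$ with $u, u' \in \ZZ_p^\times$. It suffices to produce a bijection $\frkJ(d,\ZZ_p)/\calm'(\ZZ_p) \to \frkJ(d',\ZZ_p)/\calm'(\ZZ_p)$. Set $\epsilon = u'/u \in \ZZ_p^\times$ and recall the element $\gamma(\epsilon) \in \calm(\ZZ_p)$ from Section~3 with $\nu(\gamma(\epsilon)) = \epsilon$; then $\det(\gamma(\epsilon)\cdot T) = \epsilon \det T$, so $T \mapsto \gamma(\epsilon)\cdot T$ carries $\frkJ(d,\ZZ_p)$ into $\frkJ(d',\ZZ_p)$ (one checks it lands in the lattice $\frkJ(\ZZ_p)$, e.g.\ because $\gamma(\epsilon)$ is an automorphism of the $\ZZ_p$-lattice by the explicit formula). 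Its inverse is $\gamma(\epsilon^{-1})\cdot(-)$.

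\textbf{Step 2: Descend to $\calm'(\ZZ_p)$-orbits.} Since $\calm'(\ZZ_p)$ is normal in $\calm(\ZZ_p)$ (it is the kernel of $\nu$), conjugation by $\gamma(\epsilon)$ preserves $\calm'(\ZZ_p)$, so the map $T \mapsto \gamma(\epsilon)\cdot T$ sends $\calm'(\ZZ_p)$-orbits to $\calm'(\ZZ_p)$-orbits and induces a well-defined bijection on orbit sets.

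\textbf{Step 3: Invariance of the two weights.} For $\beta_p$: by Theorem~\ref{th.interpretation-of-local-density}(1), $\beta_p(T) = \delta_p(1-p^{-1}) p^{27n}/\#\calo_{\calm(\ZZ_p/p^n\ZZ_p)}(T)$ for $n \ge e_3(T)+1$, and since $\gamma(\epsilon) \in \calm(\ZZ_p)$, multiplication by $\gamma(\epsilon)$ gives a bijection of the $\calm(\ZZ_p/p^n\ZZ_p)$-orbit of $\bar T$ onto that of $\overline{\gamma(\epsilon)\cdot T}$ (and $e_3$ is unchanged since $e_i(T)$ depends only on the $\calm(\ZZ_p)$-class, by Lemma~\ref{lem.Jordan-decomposition}), so $\beta_p(\gamma(\epsilon)\cdot T) = \beta_p(T)$. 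Equivalently one can invoke directly that $\beta_p$ is an $\calm(\ZZ_p)$-class invariant and note $\gamma(\epsilon)\cdot T$ and $T$ lie in the same $\calm(\ZZ_p)$-class precisely when\dots\ actually they need \emph{not}, so the orbit-counting formula is the clean argument. For $\widetilde f_T^p(X)$: the Siegel series $S_p(s,T) = \sum_{T'} \mathbf{e}((T,T'))\kappa_p(T')^{-s}$ is unchanged under $T \mapsto g\cdot T$ for $g \in \calm(\ZZ_p)$, because $g$ permutes $\frkJ(\QQ_p)/\frkJ(\ZZ_p)$, preserves $\kappa_p$ (which depends only on the class mod $\frkJ(\ZZ_p)$, via the $e_i$), and the pairing transforms as $(g\cdot T, T') = (T, g^*\cdot T')$ with $g^* \in \calm(\ZZ_p)$; hence $f_T^p = f_{g\cdot T}^p$. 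Since $\ord_p(\det(\gamma(\epsilon)\cdot T)) = \ord_p(\det T)$, also $\widetilde f_T^p = \widetilde f_{\gamma(\epsilon)\cdot T}^p$. Therefore every term in the sum defining $\lambda_p(d';X)$ matches a term of $\lambda_p(d;X)$ under the bijection, and the two sums are equal.

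\textbf{Main obstacle.} The delicate point is Step~3 for $\beta_p$: one must be careful that $\gamma(\epsilon)\cdot T$ and $T$ generally lie in \emph{different} $\calm'(\ZZ_p)$-classes (indeed different $\calm(\ZZ_p)$-classes when $\epsilon$ is a non-square times the original $e_3$-unit), so one cannot simply say "$\beta_p$ is a class invariant.'' The correct route is the orbit-counting formula of Theorem~\ref{th.interpretation-of-local-density}(1), combined with the fact that $e_i(T)$ — hence the bound $n \ge e_3(T)+1$ and the normalizing power $p^{27n}$ — is literally unchanged by $\gamma(\epsilon)$ because $\gamma(\epsilon) \in \calm(\ZZ_p)$ and $e_i$ depends only on the $\calm(\ZZ_p)$-class. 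Verifying that $\gamma(\epsilon)$ preserves the integral lattice $\frkJ(\ZZ_p)$ (so the map is genuinely between the advertised sets) is routine from the displayed formula for $\gamma(\epsilon)$ together with $\epsilon \in \ZZ_p^\times$.
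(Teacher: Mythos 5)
Your proof is correct, and it rests on the same mechanism as the paper's one-line proof, which just cites Lemma \ref{lem.Jordan-decomposition}(2): there every class in $\frkJ(d,\ZZ_p)/\calm'(\ZZ_p)$ is put in the normal form $p^{a_1}\bot p^{a_2}\bot \epsilon p^{a_3}$ with $a_1\le a_2\le a_3$, $a_1+a_2+a_3=\ord_p(d)$ and $\epsilon=dp^{-a_1-a_2-a_3}$ forced, so for $d,d'$ with $\ord_p(d)=\ord_p(d')$ the orbit sets are canonically matched by the triples $(a_1,a_2,a_3)$, and the summands agree because each normal form is $\calm(\ZZ_p)$-equivalent to $p^{a_1}\bot p^{a_2}\bot p^{a_3}$ while $\widetilde f_T^p$ and $\beta_p$ are $\calm(\ZZ_p)$-class invariants. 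Your explicit twist by $\gamma(u'/u)$, using that $\calm'(\ZZ_p)$ is normal in $\calm(\ZZ_p)$, packages the same bijection without invoking the classification, and your verification that the Siegel series is $\calm(\ZZ_p)$-invariant (via $g^*$ and the invariance of $\kappa_p$) supplies a detail the paper leaves implicit; both routes are fine. One correction: your ``main obstacle'' is not an obstacle. Since $\epsilon\in\ZZ_p^\times$, the element $\gamma(\epsilon)$ lies in $\calm(\ZZ_p)$ (the multiplier $\nu$ is allowed to be an arbitrary unit in the definition of $\calm$), so $T$ and $\gamma(\epsilon)\cdot T$ lie in the \emph{same} $\calm(\ZZ_p)$-orbit by definition, and the remark following Definition \ref{def.local-density} that $\beta_p$ depends only on the $\calm(\ZZ_p)$-class already gives $\beta_p(\gamma(\epsilon)\cdot T)=\beta_p(T)$; the detour through the orbit-counting formula of Theorem \ref{th.interpretation-of-local-density} is harmless but unnecessary. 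What genuinely need not be preserved is the $\calm'(\ZZ_p)$-class, but none of the quantities in the sum is defined at that finer level.
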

\begin{proof}
The assertion follows from Lemma \ref{lem.Jordan-decomposition} (2).
\end{proof}

By Lemma \ref{lem.independence-of-d}, $H_p(d;X,t)$ does not depend on the choice of $d \in \ZZ_p^{\times}$, and
we write it as $H_p(X,t)$. Hence
\begin{theorem}\label{th.local-global-RS2}
We have
\[R(s,F_f,F_f)=c\prod_p H_p(\alpha_p,p^{-s+2k}).\]
\end{theorem}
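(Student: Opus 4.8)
The plan is to show that the coefficient series $\sum_{d=1}^\infty C(d,f)\,d^{-s+2k}$ obtained in Theorem \ref{th.local-global-RS1} factors as an Euler product, and that each Euler factor is precisely $H_p(\alpha_p,p^{-s+2k})$. The key structural input is that the arithmetic function $d\mapsto C(d;f)=\prod_{p<\infty}\lambda_p(d,\alpha_p)$ is built from purely local data that is \emph{multiplicative} in the right sense: by Lemma \ref{lem.independence-of-d}, $\lambda_p(d,X)$ depends only on $\ord_p(d)$, and for a prime $q\neq p$ with $\ord_q(d)=0$ we have (by Lemma \ref{lem.Jordan-decomposition}, since any $T\in\frkJ(d,\ZZ_q)$ with $d\in\ZZ_q^\times$ is $\calm'(\ZZ_q)$-equivalent to $1\bot1\bot d$, and $\beta_q(1\bot1\bot d)$ together with $\widetilde f^q_T$ is trivial in that case) $\lambda_q(d,\alpha_q)=1$. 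Thus $\lambda_p(d,\alpha_p)$ only contributes nontrivially at the primes $p\mid d$, and depends there only on $p^{\ord_p(d)}$.

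First I would write $d=\prod_p p^{m_p}$ with $m_p=\ord_p(d)$, almost all zero. Then
\[
C(d;f)=\prod_{p<\infty}\lambda_p(d,\alpha_p)=\prod_{p}\lambda_p(p^{m_p},\alpha_p),
\]
where the last product is really finite because each factor with $m_p=0$ equals $1$. Next I would substitute this into the Dirichlet series:
\[
\sum_{d=1}^\infty C(d;f)\,d^{-s+2k}=\sum_{(m_p)_p}\Big(\prod_p\lambda_p(p^{m_p},\alpha_p)\Big)\prod_p (p^{-s+2k})^{m_p},
\]
the sum running over all sequences $(m_p)$ of non-negative integers with finite support. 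Interchanging sum and product (justified by absolute convergence for $\mathrm{Re}(s)$ large, which follows from the convergence of $R(s,F_f,F_f)$ established in Theorem \ref{th.Rankin-Selberg}, or by a direct majorization using Hecke/Deligne bounds on $\alpha_p$ and polynomial growth of $\lambda_p$), this becomes
\[
\prod_p\Big(\sum_{m=0}^\infty \lambda_p(p^m,\alpha_p)(p^{-s+2k})^m\Big)=\prod_p H_p(\alpha_p,p^{-s+2k}),
\]
using the definition of $H_p(d;X,t)$ and the fact, noted just before the statement, that $H_p(d;X,t)$ is independent of the choice of $d\in\ZZ_p^\times$ so may be written $H_p(X,t)$. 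Combining with Theorem \ref{th.local-global-RS1} gives $R(s,F_f,F_f)=c\prod_p H_p(\alpha_p,p^{-s+2k})$.

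The main obstacle I anticipate is the bookkeeping needed to make the factorization $C(d;f)=\prod_p\lambda_p(p^{\ord_p(d)},\alpha_p)$ fully rigorous: one must verify that $\lambda_p(d,\alpha_p)=1$ whenever $p\nmid d$, so that the nominally infinite product $\prod_{p<\infty}\lambda_p(d,\alpha_p)$ in the definition of $C(d;f)$ is genuinely finite and reorganizes correctly. This requires knowing that for $d\in\ZZ_p^\times$ the single class $1\bot1\bot d$ exhausts $\frkJ(d,\ZZ_p)/\calm'(\ZZ_p)$ (Lemma \ref{lem.Jordan-decomposition}(2.2)), that $\beta_p(1_3)$ is the normalizing value $(1-p^{-1})\delta_p/\int_{\calo_{\calm(\ZZ_p)}(1_3)}|d\sigma|_p$ with $\calo_{\calm(\ZZ_p)}(1_3)=\frkJ(\ZZ_p)^{\mathrm{max}}$ of measure controlled by Lemma \ref{lem.order-of-GE_6}, and that $\widetilde f^p_T(X)=1$ for $T$ unimodular (immediate from $S_p(s,T)=(1-p^{-s})(1-p^{4-s})(1-p^{8-s})$ in that case, i.e.\ $f^p_T\equiv1$). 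Once these local normalizations are in hand the interchange of summation and product is routine; everything else is formal manipulation of Dirichlet series and power series, already set up by Lemmas \ref{lem.independence-of-d} and the preceding discussion.
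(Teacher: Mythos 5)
Your overall route is the same as the paper's: Theorem \ref{th.local-global-RS1} reduces the statement to factoring the Dirichlet series $\sum_{d}C(d;f)d^{-s+2k}$ as an Euler product, and Lemma \ref{lem.independence-of-d} together with the definition of $H_p(d;X,t)$ does the rest (the paper itself offers nothing beyond ``Hence''). But the step you single out as the main obstacle is resolved incorrectly. You claim that $\lambda_q(d,\alpha_q)=1$ whenever $q\nmid d$, so that the product defining $C(d;f)$ is genuinely finite. That is false: for $d\in\ZZ_q^\times$ the single class $1\bot 1\bot d$ does exhaust $\frkJ(d,\ZZ_q)/\calm'(\ZZ_q)$ and $\widetilde f^{\,q}_{1_3}=1$, but by Corollary \ref{cor.Igusa}(1) one has $\beta_q(1_3)=(1-q^{-2})(1-q^{-6})(1-q^{-8})(1-q^{-12})$, hence
\[
\lambda_q(d,\alpha_q)=\{(1-q^{-2})(1-q^{-6})(1-q^{-8})(1-q^{-12})\}^{-1}\neq 1 .
\]
This is exactly the constant term in $t$ of $H_p(X,t)$ in Theorem \ref{th.explicit-local-RS}, and it is why the factor $\zeta(2)\zeta(6)\zeta(8)\zeta(12)$ appears in Theorem \ref{th.explicit-RS}; if your claim were correct, that factor would be absent.

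The error is fixable without changing your strategy, but the fix should be stated: setting $b_p=\{(1-p^{-2})(1-p^{-6})(1-p^{-8})(1-p^{-12})\}^{-1}$, the infinite product $\prod_p\lambda_p(d,\alpha_p)$ converges absolutely because its factors equal $b_p=1+O(p^{-2})$ for all $p\nmid d$; writing $\lambda_p=b_p\widetilde\lambda_p$ so that $\widetilde\lambda_p(d,\alpha_p)=1$ for $p\nmid d$, the standard Euler-product rearrangement (justified by absolute convergence for $\mathrm{Re}(s)$ large, as you note) gives $\sum_d C(d;f)d^{-s+2k}=\bigl(\prod_p b_p\bigr)\prod_p\sum_m\widetilde\lambda_p(p^m,\alpha_p)(p^{-s+2k})^m=\prod_p H_p(\alpha_p,p^{-s+2k})$. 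With that correction your argument is complete and coincides with the paper's intended one.
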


\section{Explicit formula  for $\beta_p(T)$}

We give an explicit formula for the local density.
We define the local zeta function $\zeta_{\calm_p}(s)$ and $Z_{\frkJ,p}(s)$ by
\[\zeta_{\calm_p}(s) =\sum_{T \in \frkJ(\ZZ_p)/\calm(\ZZ_p)} {1 \over \beta_p(T) p^{s\, \ord_p(\det T)}}\]
and
\[Z_{\frkJ,p}(s)=\int_{\frkJ(\ZZ_p)} |\det x|^{s} |d\sigma(x)|_p.\]

\begin{proposition} \label{prop.Igusa}
We have 
\[\zeta_{\calm_p}(s)={1 \over (1-p^{-2})(1-p^{-6})(1-p^{-8})(1-p^{-12}) 
(1-p^{-s-1})(1-p^{-s-5})(1-p^{-s-9})}.\]
\end{proposition}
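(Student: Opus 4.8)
The plan is to recognize $\zeta_{\calm_p}(s)$ as a prehomogeneous-vector-space zeta function (in the sense of Sato--Shintani / Igusa) for the action of $\calm_p$ on $\frkJ(\ZZ_p)$, and to compute it by relating it to the Igusa local zeta function $Z_{\frkJ,p}(s)$ attached to the relative invariant $\det$. First I would unwind the definition of $\beta_p(T)$ from Definition \ref{def.local-density}: for $T\in\calm(\ZZ_p)^{\mathrm{ns}}$ one has $\beta_p(T)^{-1}=(1-p^{-1})^{-1}\delta_p^{-1}\int_{\calo_{\calm(\ZZ_p)}(T)}|d\sigma(x)|_p$, so that
\[
\zeta_{\calm_p}(s)=\frac{1}{(1-p^{-1})\delta_p}\sum_{T\in\frkJ(\ZZ_p)/\calm(\ZZ_p)} |\det T|_p^{s}\int_{\calo_{\calm(\ZZ_p)}(T)}|d\sigma(x)|_p,
\]
using $|\det T|_p=p^{-\ord_p(\det T)}$ and noting that $\det x$ is constant on each orbit. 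Since $\frkJ(\ZZ_p)^{\mathrm{ns}}$ is (up to measure zero) the disjoint union of the compact open orbits $\calo_{\calm(\ZZ_p)}(T)$, the sum over $T$ reassembles into a single integral, giving
\[
\zeta_{\calm_p}(s)=\frac{1}{(1-p^{-1})\delta_p}\int_{\frkJ(\ZZ_p)}|\det x|_p^{s}\,|d\sigma(x)|_p
=\frac{Z_{\frkJ,p}(s)}{(1-p^{-1})\delta_p}.
\]

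The next step is to evaluate $Z_{\frkJ,p}(s)$ explicitly. This is a classical computation for the prehomogeneous space $(\calm,\frkJ)$ over $\ZZ_p$: using the orbit decomposition of Lemma \ref{lem.Jordan-decomposition}(3), every $T\in\frkJ(\ZZ_p)^{\mathrm{ns}}$ is $\calm(\ZZ_p)$-equivalent to $p^{a_1}\bot p^{a_2}\bot p^{a_3}$ with $0\le a_1\le a_2\le a_3<\infty$, and $\ord_p(\det T)=a_1+a_2+a_3$. I would integrate over these strata, computing the volume $\int_{\calo_{\calm(\ZZ_p)}(p^{a_1}\bot p^{a_2}\bot p^{a_3})}|d\sigma(x)|_p$ by the usual recursive/counting-principle argument (Lemma \ref{lem.counting-principle} and the order formulas of Lemma \ref{lem.order-of-GE_6}). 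The resulting multi-index geometric series over $0\le a_1\le a_2\le a_3$ of $p^{-(s)(a_1+a_2+a_3)}\cdot(\text{volume})$ telescopes into the product $(1-p^{-s-1})^{-1}(1-p^{-s-5})^{-1}(1-p^{-s-9})^{-1}$ times a factor accounting for the normalization; the exponents $1,5,9$ are exactly the degrees governing the $b$-function / the structure of $\calm$ acting on $\frkJ$ (consistent with $\delta_p$ having factors $1-p^{-2},1-p^{-5},1-p^{-6},1-p^{-8},1-p^{-9},1-p^{-12}$). Dividing by $(1-p^{-1})\delta_p$ and cancelling $(1-p^{-1})(1-p^{-5})(1-p^{-9})$ against three of the six factors of $\delta_p$ leaves precisely
\[
\zeta_{\calm_p}(s)=\frac{1}{(1-p^{-2})(1-p^{-6})(1-p^{-8})(1-p^{-12})(1-p^{-s-1})(1-p^{-s-5})(1-p^{-s-9})}.
\]

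An alternative route, should the direct stratum integration prove cumbersome, is to appeal to Igusa's general computation of the local zeta function for the exceptional Jordan algebra (the $E_6$ prehomogeneous space), or to a functional-equation/Gindikin--Karpelevich-type argument exploiting that $(\calm,\frkJ)$ is regular with one basic relative invariant $\det$ of degree $3$; either way the denominator shape $\prod_{j\in\{1,5,9\}}(1-p^{-s-j})$ is forced and the leading (residue) constant is pinned down by the total volume $\int_{\frkJ(\ZZ_p)}|d\sigma(x)|_p=1$ at $s=0$ combined with the order formula for $\#\calm(\ZZ_p/p\ZZ_p)$. The main obstacle I anticipate is the bookkeeping in the stratum volume computation: correctly identifying, for each triple $(a_1,a_2,a_3)$, the $p$-adic volume of the corresponding $\calm(\ZZ_p)$-orbit inside $\frkJ(\ZZ_p)$ — in particular handling coincidences $a_i=a_{i+1}$ where the stabilizer jumps — and checking that the normalizing constants $(1-p^{-1})$ and $\delta_p$ cancel exactly as claimed rather than leaving a spurious Euler factor. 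Verifying the $s=0$ specialization $\zeta_{\calm_p}(0)\cdot(1-p^{-1})\delta_p = Z_{\frkJ,p}(0)=1$ is a useful consistency check to run at the end.
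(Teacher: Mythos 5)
Your reduction $\zeta_{\calm_p}(s)=(1-p^{-1})^{-1}\delta_p^{-1}Z_{\frkJ,p}(s)$ is exactly the first half of the paper's proof: the paper decomposes $\frkJ(\ZZ_p)$ into the level sets $\calh(i;\ZZ_p)=\{T \mid \ord_p(\det T)=i\}$, writes each as a disjoint union of $\calm(\ZZ_p)$-orbits, and uses Definition \ref{def.local-density} to convert each orbit volume into $(1-p^{-1})\delta_p/\beta_p(T)$; your version is the same computation read in the opposite direction and is correct. The divergence is in how $Z_{\frkJ,p}(s)$ is then evaluated. The paper simply quotes Igusa's computation (\cite[Lemma 5]{Ig}), namely
\[Z_{\frkJ,p}(s)=\frac{(1-p^{-1})(1-p^{-5})(1-p^{-9})}{(1-p^{-s-1})(1-p^{-s-5})(1-p^{-s-9})},\]
and cancels the numerator against $(1-p^{-1})\delta_p$ exactly as you describe; this is your ``alternative route,'' and it is the actual proof. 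Your primary route, by contrast, would compute $Z_{\frkJ,p}(s)$ by summing the orbit volumes $\int_{\calo_{\calm(\ZZ_p)}(p^{a_1}\bot p^{a_2}\bot p^{a_3})}|d\sigma(x)|_p$ over all triples $a_1\le a_2\le a_3$ via Lemmas \ref{lem.counting-principle} and \ref{lem.order-of-GE_6}. But those volumes are $(1-p^{-1})\delta_p/\beta_p(T)$, i.e.\ precisely the quantities whose explicit determination is the goal of Section 6 (Theorem \ref{th.explicit-formula-of-local-density}), and in the paper's logical order that determination is a \emph{consequence} of Proposition \ref{prop.Igusa} (via Corollary \ref{cor.Igusa}, the mass formula, and the induction formulas), not an input to it: computing $\#\calu_T(\ZZ_p/p^n\ZZ_p)$ directly for $T$ that are degenerate mod $p$ is exactly the difficulty the authors say they are circumventing. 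So your primary route is either circular or amounts to redoing the hardest part of Section 6 from scratch; you should promote the appeal to Igusa's lemma from fallback to main argument. The final cancellation of $(1-p^{-1})(1-p^{-5})(1-p^{-9})$ against $(1-p^{-1})\delta_p$ and the consistency check $Z_{\frkJ,p}(0)=1$ are both correct.
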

\begin{proof}
Let $\calh(i;\ZZ_p)=\{T \in \frkJ(\ZZ_p) \ | \ \ord_p(\det T)=i\}$.
Then we have
\begin{align*}
Z_{\frkJ,p}(s)=\sum_{i=0}^{\infty} p^{-is}  \int_{\calh(i;\ZZ_p)} |d\sigma(x)|_p. 
\end{align*}
By Theorem \ref{th.interpretation-of-local-density}, we have
\begin{align*}
\int_{\calh(i;\ZZ_p)} |d\sigma(x)|_p&=\sum_{T \in \calh(i;\ZZ_p)/\calm(\ZZ_p)} \int_{\calo_{\calm(\ZZ_p)}(T)} |d\sigma(x)|_p=(1-p^{-1})\delta_p \sum_{T \in \calh(i;\ZZ_p)/\calm(\ZZ_p)} {1 \over \beta_p(T)}. 
\end{align*}
Hence we have 
\[\zeta_{\calm_p}(s)=(1-p^{-1})^{-1}\delta_p^{-1}Z_{\frkJ,p}(s).\]
By \cite[Lemma 5]{Ig}, we have
\[Z_{\frkJ,p}(s)={(1-p^{-1})(1-p^{-5})(1-p^{-9}) \over (1-p^{-s-1})(1-p^{-s-5})(1-p^{-s-9})}.\]
This proves the proposition.
\end{proof}

\begin{corollary}
\label{cor.Igusa}
Let $T \in \frkJ(\ZZ_p)$ and suppose that $\det T \not=0$.
\begin{itemize}
\item[(1)] Suppose that $\ord_p(\det T)=0$. Then
\[\beta_p(T)=(1-p^{-2})(1-p^{-6})(1-p^{-8})(1-p^{-12}).\]
\item[(2)] Suppose that $\ord_p(\det T)=1$. Then
\[\beta_p(T)=p(1-p^{-2})(1-p^{-4})(1-p^{-6})(1-p^{-8}).\]
\end{itemize}
\end{corollary}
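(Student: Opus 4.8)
The plan is to extract both values directly from Proposition \ref{prop.Igusa} by comparing Taylor coefficients in $p^{-s}$, using Lemma \ref{lem.Jordan-decomposition} to pin down the summation set in low degree. First I would note that by Lemma \ref{lem.Jordan-decomposition}(2.1) every nonsingular $T \in \frkJ(\ZZ_p)$ satisfies $T \sim_{\calm(\ZZ_p)} p^{e_1(T)} \bot p^{e_2(T)} \bot p^{e_3(T)}$ with $0 \le e_1(T) \le e_2(T) \le e_3(T)$ and $e_1(T)+e_2(T)+e_3(T) = \ord_p(\det T)$, and that (by the remark after Definition \ref{def.local-density}) $\beta_p(T)$ depends only on this class. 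Hence the $\calm(\ZZ_p)$-classes with $\ord_p(\det T)=0$ reduce to the single class of $1_3$, and those with $\ord_p(\det T)=1$ reduce to the single class of $1 \bot 1 \bot p$. Writing $t=p^{-s}$, Proposition \ref{prop.Igusa} becomes the identity of formal power series
\[
\sum_{T \in \frkJ(\ZZ_p)/\calm(\ZZ_p)} \frac{t^{\ord_p(\det T)}}{\beta_p(T)} = \frac{1}{(1-p^{-2})(1-p^{-6})(1-p^{-8})(1-p^{-12})}\cdot\frac{1}{(1-p^{-1}t)(1-p^{-5}t)(1-p^{-9}t)},
\]
whose coefficient of $t^0$ equals $\beta_p(1_3)^{-1}$ and whose coefficient of $t^1$ equals $\beta_p(1\bot 1\bot p)^{-1}$.

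Next I would expand the right-hand side in $t$. Its constant term is $\big((1-p^{-2})(1-p^{-6})(1-p^{-8})(1-p^{-12})\big)^{-1}$, which upon taking reciprocals is exactly assertion (1). The coefficient of $t$ picks up the factor $p^{-1}+p^{-5}+p^{-9}$ from the second fraction, so
\[
\frac{1}{\beta_p(1\bot 1\bot p)} = \frac{p^{-1}+p^{-5}+p^{-9}}{(1-p^{-2})(1-p^{-6})(1-p^{-8})(1-p^{-12})}.
\]

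Finally I would apply the geometric-series identity $p^{-1}+p^{-5}+p^{-9} = p^{-1}\dfrac{1-p^{-12}}{1-p^{-4}}$, which cancels the factor $1-p^{-12}$ in the denominator and replaces it by $1-p^{-4}$; taking reciprocals gives $\beta_p(1\bot 1\bot p) = p(1-p^{-2})(1-p^{-4})(1-p^{-6})(1-p^{-8})$, which is assertion (2). I do not expect any genuine obstacle: the one step deserving care is the reduction of the summation set, i.e.\ checking that $\ord_p(\det T)=0$ and $\ord_p(\det T)=1$ each single out one $\calm(\ZZ_p)$-class, and that is immediate from the well-definedness of the invariants $e_i(T)$; everything else is a short power-series computation.
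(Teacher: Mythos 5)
Your proposal is correct and follows essentially the same route as the paper: reduce to the single classes $1_3$ and $1\bot 1\bot p$ via Lemma \ref{lem.Jordan-decomposition}, read off the coefficients of $t^0$ and $t^1$ in Proposition \ref{prop.Igusa}, and simplify using $p^{-1}+p^{-5}+p^{-9}=p^{-1}(1-p^{-12})/(1-p^{-4})$. No issues.
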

\begin{proof}
We have
\begin{equation}\label{Igusa}
\zeta_{\calm_p}(s)=\sum_{i=0}^{\infty} \sum_{T \in \calh(i;\ZZ_p)/\calm(\ZZ_p)} {1 \over \beta_p(T)}p^{-is}.
\end{equation}
By Lemma \ref{lem.Jordan-decomposition}, if $\ord_p(\det T)=0$, then 
$T \sim_{\calm(\ZZ_p)} 1_3$. Hence by (\ref{Igusa}) and Proposition \ref{prop.Igusa}, we have 
\[\beta_p(1_3)=(1-p^{-2})(1-p^{-6})(1-p^{-8})(1-p^{-12}).\]
Next, if $\ord_p(\det T)=1$, then 
$T \sim_{\calm(\ZZ_p)} 1_2 \bot p$. Hence again by (\ref{Igusa}) and Proposition \ref{prop.Igusa}, we have
\[{1 \over \beta_p(1_2 \bot p)}={p^{-1}+p^{-5}+p^{-9} \over (1-p^{-2})(1-p^{-6})(1-p^{-8})(1-p^{-12})}.\]
This proves that
\begin{eqnarray*}
\beta_p(1_2 \bot p)&=&p{(1-p^{-2})(1-p^{-6})(1-p^{-8})(1-p^{-12}) \over 1+p^{-4}+p^{-8}} \\
&=&p(1-p^{-2})(1-p^{-4})(1-p^{-6})(1-p^{-8}).
\end{eqnarray*}
\end{proof}

\begin{proposition} \label{prop.induction-formula-of-local-density1}
Let $T \in \frkJ(\ZZ_p)$ and suppose that $\det T \not=0$.
\begin{itemize}
\item[(1)] $\beta_p(pT)=p^{27}\beta_p(T)$.
\item[(2)] $\beta_p(T \times T)=p^{9\, \ord_p(\det T)}\beta_p(T)$.
\end{itemize}
\end{proposition}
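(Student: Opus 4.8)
The plan is to use the explicit characterization of $\beta_p$ via orbit counting from Theorem \ref{th.interpretation-of-local-density}(1), namely
\[\beta_p(T) = \delta_p(1-p^{-1})\,\frac{p^{27n}}{\#\calo_{\calm(\ZZ_p/p^n\ZZ_p)}(\bar T)}\]
for $n \ge e_3(T)+1$, together with the elementary behaviour of the $\calm(\ZZ_p)$-orbit structure under scaling by $p$ and under the operation $T\mapsto T\times T$. Both assertions are statements about $\calm(\ZZ_p)$-equivalence classes, so by Lemma \ref{lem.Jordan-decomposition}(2.1) it suffices to verify them on the diagonal normal form $T\sim_{\calm(\ZZ_p)} p^{a_1}\bot p^{a_2}\bot p^{a_3}$ with $a_i=e_i(T)$, where $\ord_p(\det T)=a_1+a_2+a_3$.

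For (1): the linear map $\gamma(p)$ (or, more symmetrically, the map $\theta$-type scaling that multiplies $T$ by $p$ as a whole) shows $pT\sim_{\calm(\ZZ_p)} p^{a_1+1}\bot p^{a_2+1}\bot p^{a_3+1}$, so $e_i(pT)=e_i(T)+1$. Choosing $n$ large enough to work simultaneously for $T$ and $pT$, the key point is to compare the orbit cardinalities $\#\calo_{\calm(\ZZ_p/p^n\ZZ_p)}(\overline{pT})$ and $\#\calo_{\calm(\ZZ_p/p^{n+1}\ZZ_p)}(\overline{T})$: multiplication by $p$ gives an $\calm$-equivariant map between the relevant orbits, and a dimension count (the stabilizer $\calu_T$ is smooth of the same dimension as $\calu_{pT}$, while the ambient $\frkJ$ has dimension $27$) yields a factor of $p^{27}$. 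Concretely one can argue that if $n\ge e_3(T)+1$ then $n+1\ge e_3(pT)+1$, and
\[\frac{p^{27(n+1)}}{\#\calo_{\calm(\ZZ_p/p^{n+1}\ZZ_p)}(\overline{pT})} = p^{27}\cdot\frac{p^{27n}}{\#\calo_{\calm(\ZZ_p/p^{n}\ZZ_p)}(\overline{T})},\]
the point being that $\#\calo_{\calm(\ZZ_p/p^{n+1}\ZZ_p)}(\overline{pT}) = \#\calo_{\calm(\ZZ_p/p^{n}\ZZ_p)}(\overline{T})$, since reduction mod $p^n$ identifies the orbit of $pT$ in $\frkJ(\ZZ_p/p^{n+1}\ZZ_p)$ with the orbit of $T$ in $\frkJ(\ZZ_p/p^n\ZZ_p)$ scaled by $p$. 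Multiplying through by $\delta_p(1-p^{-1})$ gives $\beta_p(pT)=p^{27}\beta_p(T)$. Alternatively, one may read it off directly from the defining integral $\beta_p(T)=(1-p^{-1})\delta_p/\int_{\calo_{\calm(\ZZ_p)}(T)}|d\sigma(x)|_p$, since $x\mapsto px$ scales the $27$-dimensional measure $d\sigma$ by $p^{-27}$ and carries $\calo_{\calm(\ZZ_p)}(T)$ onto $\calo_{\calm(\ZZ_p)}(pT)$; this is the cleanest route.

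For (2): on the normal form, a direct computation of the adjoint/cross product gives $T\times T = (p^{a_1}\bot p^{a_2}\bot p^{a_3})\times(\text{same}) \sim p^{a_2+a_3}\bot p^{a_1+a_3}\bot p^{a_1+a_2}$, which one checks from the explicit formula for $X_1\times X_2$ in Section 2 applied to diagonal $X_1=X_2$ (the off-diagonal Cayley entries vanish and the diagonal entries are the products of pairs). Hence $e_i(T\times T)$ are the numbers $a_1+a_2, a_1+a_3, a_2+a_3$ sorted, and in particular $\ord_p(\det(T\times T)) = 2(a_1+a_2+a_3) = 2\ord_p(\det T)$, consistent with $\det(T\times T)=(\det T)^2$. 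The identity to prove, $\beta_p(T\times T)=p^{9\,\ord_p(\det T)}\beta_p(T)$, then follows by comparing $\calm(\ZZ_p)$-orbit sizes of the two diagonal matrices mod $p^n$; again a stabilizer-dimension comparison works, but the slickest proof is to use the $\calm$-equivariance relation $g\cdot(X\times X)=(g^*\cdot X)\times(g^*\cdot X)$ from Section 2 together with the fact that $g\mapsto g^*$ is an automorphism of $\calm$ preserving $|d\sigma|_p$-related data. This reduces the claim to the effect of the rational map $X\mapsto X\times X$ on the measure $|d\sigma(x)|_p$ restricted to the orbit, where the Jacobian on the relevant $27$-dimensional orbit contributes exactly $|\det T|_p^{-9}$, i.e. $p^{9\,\ord_p(\det T)}$.

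The main obstacle will be (2): unlike scaling by $p$, the map $X\mapsto X\times X$ is quadratic and its restriction to a single orbit $\calo_{\calm(\ZZ_p)}(T)$ is not obviously a bijection onto $\calo_{\calm(\ZZ_p)}(T\times T)$, nor is its Jacobian immediately transparent. The cleanest way around this is probably to avoid the geometric map entirely and instead combine the normal-form computation above with the local functional equation / Igusa-type identity behind Proposition \ref{prop.Igusa}: one knows $\beta_p$ on all $\calm(\ZZ_p)$-classes is pinned down by the generating identity (\ref{Igusa}), and $T\times T$ lies in a class explicitly determined by that of $T$, so the factor $p^{9\,\ord_p(\det T)}$ can be extracted by matching exponents of $p^{-s}$ on both sides — this is essentially the same mechanism as the proof of Corollary \ref{cor.Igusa}, now carried out for a general class rather than just $\ord_p(\det T)\in\{0,1\}$. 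I expect (1) to be essentially immediate from the measure-scaling argument and (2) to require this more careful bookkeeping with the normal form.
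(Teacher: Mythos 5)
Your argument for part (1) is correct and is actually more direct than the paper's: the map $x\mapsto px$ is an $\calm(\ZZ_p)$-equivariant bijection from $\calo_{\calm(\ZZ_p)}(T)$ onto $\calo_{\calm(\ZZ_p)}(pT)$ scaling $|d\sigma|_p$ by $p^{-27}$, and Definition \ref{def.local-density} then gives $\beta_p(pT)=p^{27}\beta_p(T)$ immediately. The paper instead proves both parts by a \emph{global} argument: choose $\widetilde T\in\frkJ(\ZZ)_{>0}$ with $\widetilde T\sim_{\calm(\ZZ_p)}T$ and $\det\widetilde T$ a unit away from $p$ (Proposition \ref{prop.approximation}(2)), observe that $\mathrm{Mass}(m\widetilde T)=\mathrm{Mass}(\widetilde T)$ and, via the automorphism $g\mapsto g^*$ identifying $\calu_{\widetilde T}$ with $\calu_{\widetilde T\times\widetilde T}$, that $\mathrm{Mass}(\widetilde T\times\widetilde T)=\mathrm{Mass}(\widetilde T)$; then the mass formula $\mathrm{Mass}(T)=c(\det T)^9/\prod_q\beta_q(T)$, together with the invariance of $\beta_q$ at $q\neq p$, forces the stated power of $p$ at the place $p$. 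This sidesteps entirely the local analysis of the quadratic map $X\mapsto X\times X$.

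For part (2) your proposal has a genuine gap. Your preferred fallback — that ``$\beta_p$ on all $\calm(\ZZ_p)$-classes is pinned down by the generating identity (\ref{Igusa})'' — is false: the identity gives exactly one linear relation among $\{1/\beta_p(T)\}$ for each value $i=\ord_p(\det T)$, while the number of $\calm(\ZZ_p)$-classes with $\ord_p(\det T)=i$ is the number of partitions of $i$ into at most three parts, which exceeds $1$ as soon as $i\ge 2$ (e.g.\ $1\bot 1\bot p^2$ and $1\bot p\bot p$). Indeed the paper only extracts individual values from (\ref{Igusa}) when the class is unique ($i=0,1$) or when all but one class in a given $i$ is already known (the $i=3$ step in Theorem \ref{th.explicit-formula-of-local-density}); Propositions \ref{prop.induction-formula-of-local-density1} and \ref{prop.induction-formula-of-local-density2} exist precisely because (\ref{Igusa}) alone is not enough, so invoking it here is circular. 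Your alternative route via the Jacobian of $X\mapsto X\times X$ on the orbit is also not established: the map is generically $2$-to-$1$ on the orbit (since $(-X)\times(-X)=X\times X$ and $-1\in\calm(\ZZ_p)$), and the asserted Jacobian value $|\det T|_p^{-9}$ is exactly the nontrivial content one would need to prove — a direct computation of $\det(H\mapsto 2X\times H)$ produces a constant that is not obviously a $p$-adic unit (powers of $2$ appear), so the bookkeeping at $p=2$ in particular is not transparent. To close the gap you would either have to carry out that Jacobian and degree computation honestly, or adopt the paper's global mass-formula argument, whose only nonformal input is the isomorphism $\calu_{\widetilde T}\simeq\calu_{\widetilde T\times\widetilde T}$ induced by $g\mapsto g^*$.
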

\begin{proof}
Let $\widetilde T$ be an element of $\frkJ(\ZZ)_{>0}$ satisfying the condition in Proposition \ref{prop.approximation} (2).
Then, by Theorem \ref{th.mass-formula2}, we have 
\[\mathrm{Mass}(\widetilde T)={c (\det \widetilde T)^9 \over \prod_{q<\infty} \beta_q(\widetilde T)},\quad \mathrm{Mass}(p\widetilde T)={c (\det (p\widetilde T))^9 \over \prod_{q<\infty} \beta_q(p\widetilde T)},\quad \mathrm{Mass}(\widetilde T \times \widetilde T)={c (\det (\widetilde T \times \widetilde T) )^9 \over \prod_{q<\infty} \beta_q(\widetilde T \times \widetilde T)},\]
By definition, $\beta_q(p\widetilde T)=\beta_q(\widetilde T)$ for any $q \not=p$. Now we show for any $T\in \mathcal J(\ZZ)_{>0}$ and $m\in\Bbb Z_{>0}$, $\mathrm{Mass}(mT)=\mathrm{Mass}(T)$: Consider the definition of $\mathrm{Mass}(T)$ in (\ref{mass-def}). Note that there is a bijection between $\mathcal G(T)$ and $\mathcal G(mT)$. Then clearly, $\mathcal U_{mT}(\ZZ)=\mathcal U_T(\ZZ)$. So $\epsilon(mT)=\epsilon(T)$. This proves the result.
Hence $\mathrm{Mass}(p\widetilde T)=\mathrm{Mass}(\widetilde T)$. Therefore we have
\[\beta_p(pT)=\beta_p(p\widetilde T)=p^{27}\beta_p(\widetilde T)=p^{27}\beta_p(T).\]
This proves (1). The automorphism $g \longrightarrow g^*$ of $\calm'(\ZZ)$ induces an isomorphism from $\calu_{\widetilde T}$ to $\calu_{\widetilde T \times \widetilde T}$. Hence we also have $\mathrm{Mass}(\widetilde T \times \widetilde T)=\mathrm{Mass}(\widetilde T)$ and
$\beta_q(\widetilde T \times \widetilde T)=\beta_q(\widetilde T)$ for $q \not=p$. Therefore
\begin{align*}
\beta_p(T \times T)&=\beta_p(\widetilde T \times \widetilde T)=p^{9\, \ord_p(\det (\widetilde T \times \widetilde T))-9\, \ord_p(\det \widetilde T)}\beta_p(\widetilde T) \\
&=p^{9\, \ord_p(\det ( T \times  T))-9\, \ord_p(\det  T)}\beta_p(T).
\end{align*}
Thus the assertion (2) is proved since $\det (T \times T)=(\det T)^2$.
\end{proof}

\begin{proposition} \label{prop.induction-formula-of-local-density2}
Let $T \in \frkJ(\ZZ_p)^{\rm ns}$ such  that  $e_1(T)=0$ and $e_2(T)<e_3(T)$, and let $T' \in \frkJ(\ZZ_p)$ such that $e_i(T')=e_i(T)$ for $i=1,2$ and $e_3(T')=e_3(T)+1$. Then,
\[\beta_p(T')=p\beta_p(T).\]
\end{proposition}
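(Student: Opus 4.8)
The plan is to unwind the definition of $\beta_p$ into an orbital volume on $\frkJ(\ZZ_p)$ and then reduce the entire statement to the local density of a single rank‑$10$ quadratic form; the hypothesis $e_2(T)<e_3(T)$ will be exactly the condition that places that density in its stable range.

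\textbf{Step 1: reduction to orbital volumes.} By Definition~\ref{def.local-density} the identity $\beta_p(T')=p\,\beta_p(T)$ is equivalent to
\[\int_{\calo_{\calm(\ZZ_p)}(T)}|d\sigma(x)|_p \;=\; p\int_{\calo_{\calm(\ZZ_p)}(T')}|d\sigma(x)|_p .\]
Since $\beta_p$ depends only on the $\calm(\ZZ_p)$-class, and since by Lemma~\ref{lem.Jordan-decomposition}(2.1) one has $\calo_{\calm(\ZZ_p)}(S)=\{x\in\frkJ(\ZZ_p)\mid e_i(x)=e_i(S),\ i=1,2,3\}$, I would first normalize $T=1\bot p^r\bot p^{e_3}$ and $T'=1\bot p^r\bot p^{e_3+1}$ with $r:=e_2(T)$ and $0\le r<e_3:=e_3(T)$. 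Equivalently one may run the argument with the mod‑$p^{n}$ orbit counts furnished by Theorem~\ref{th.interpretation-of-local-density}(1), for $n$ large, in which case it suffices to prove $\#\calo_{\calm(\ZZ_p/p^n\ZZ_p)}(\bar T)=p\cdot\#\calo_{\calm(\ZZ_p/p^n\ZZ_p)}(\bar T')$.

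\textbf{Step 2: reduction to the rank‑$2$ Jordan algebra.} Because $e_1(x)=0$, the reduction performed in the proof of Lemma~\ref{lem.Jordan-decomposition} puts $x$ into the shape $1\bot y$ with $y$ in the rank‑$2$ Jordan algebra $\frkH_2(\frkC)(\ZZ_p)=\bigl\{\bigl(\begin{smallmatrix}\alpha&w\\ \bar w&\beta\end{smallmatrix}\bigr)\mid \alpha,\beta\in\ZZ_p,\ w\in\frkC(\ZZ_p)\bigr\}$; under this, the conditions $e_2(x)=r$ and $e_3(x)=e_3$ become $y=p^r z$ with $z$ primitive and $\ord_p(\det z)=e_3-r$, where $\det\bigl(\begin{smallmatrix}\alpha&w\\ \bar w&\beta\end{smallmatrix}\bigr)=\alpha\beta-N(w)$ and $\det(1\bot p^r z)=p^{2r}\det z$. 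The set of $\calm(\ZZ_p)$-data producing such a presentation is a ``flag-type'' fibre that does not involve $e_3$, so I expect
\[\int_{\calo_{\calm(\ZZ_p)}(1\bot p^r\bot p^{e_3})}|d\sigma|_p \;=\; C_r\cdot v_p(e_3-r),\qquad v_p(m):=\int_{\{z\in\frkH_2(\frkC)(\ZZ_p)\ \mathrm{primitive}:\ \ord_p(\det z)=m\}}|d\sigma_2(z)|_p ,\]
with $C_r$ independent of $e_3$.

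\textbf{Step 3: the quadratic-form endgame.} On $\frkH_2(\frkC)$ the form $\det$ is a unimodular quadratic form of rank $10$ (the orthogonal sum of a hyperbolic plane in $(\alpha,\beta)$ and $-N$ in $w$, where the integral Cayley numbers $\frko$ equipped with $N$ form the unimodular $E_8$ lattice); in particular it is isotropic modulo $p$, and at every primitive $z$ with $\ord_p(\det z)\ge 1$ its differential $X\mapsto B(z,X)$ has unit content, so $\det$ is a submersion there. The standard local density computation for a unimodular form then gives $v_p(m)=p\,v_p(m+1)$ for all $m\ge 1$. Since $e_3>r$ forces $e_3-r\ge 1$ and $e_3+1-r\ge 1$, Step~2 yields $\int_{\calo_{\calm(\ZZ_p)}(T)}|d\sigma|_p\big/\int_{\calo_{\calm(\ZZ_p)}(T')}|d\sigma|_p = v_p(e_3-r)/v_p(e_3+1-r)=p$, which is the assertion.

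\textbf{The main obstacle} is Step~2: one must show that $\calo_{\calm(\ZZ_p)}(T)$ really is an $e_3$-independent bundle over the primitive locus of $\frkH_2(\frkC)(\ZZ_p)$, i.e. track the reduction moves $m_{\xi e_{ij}},\ \gamma(\cdot),\ \theta(\cdot)$ and their Jacobians through the non-associative multiplication of $\frkC$, and confirm that the quadratic-form identity holds \emph{exactly} from $m=1$ (the value $m=0$, i.e.\ $e_3=e_2$, being genuinely exceptional, which is precisely why $e_2(T)<e_3(T)$ is assumed). A variant that sidesteps the rank‑$2$ reduction: for $x\in\calv:=\{x\in\frkJ(\ZZ_p)\mid e_1(x)=0,\ e_2(x)=r\}$ the element $x\times x$ has invariants $(r,e_3(x),r+e_3(x))$, so the differential $H\mapsto(x\times x,H)$ of $\det$ has $p$-content exactly $p^{r}$ throughout $\calv$; combining this with the fact that $\gamma(u)$ (with $\nu(\gamma(u))=u$) preserves $\calv$ and scales $\det$ by $u$, one forces $\int_{\{x\in\calv:\ \det x=d\}}|d\sigma|_p$ to depend only on $\ord_p(d)$ and to be constant once $\ord_p(d)\ge 2r+1$ -- once again exactly the range $e_3>r$.
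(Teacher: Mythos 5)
Your diagnosis of the mechanism is exactly right: the content of the linear form $H\mapsto(x\times x,H)$ equals $p^{e_2(x)}$ on the locus $e_1(x)=0$, and the hypothesis $e_2(T)<e_3(T)$ is precisely what puts $\ord_p(\det T)$ past the threshold where that differential controls the fibral volume. Indeed, the ``variant'' you sketch at the end is essentially the paper's proof. The paper makes it precise by a direct count in $\frkJ(\ZZ_p/p^n\ZZ_p)$: it introduces the sets $\cala_{n_2,n}$ and $\calb_{n_2,n}$ of classes $X$ with $e_1(X)=0$, $e_1(X\times X)=n_2$ and $\det X\equiv 0$ mod $p^{n_2+n}$ (resp.\ $p^{n_2+n-1}$), writes $\#\calo_{\calm(\ZZ_p/p^{e_3+1})}(\bar T)=\#\calb_{e_2,e_3+1}-\#\cala_{e_2,e_3+1}$ and similarly for $T'$ one level up, and then uses the expansion $\det(X+p^nX_1)\equiv\det X+p^n(X\times X,X_1)$ mod $p^{n+n_2+1}$ (valid since $n>n_2$, i.e.\ $e_3>e_2$) to show that lifting mod $p^n\to p^{n+1}$ multiplies $\#\calb$ by $p^{27}$ and $\#\cala$ by $p^{26}$, the latter because the congruence imposes one nontrivial linear condition mod $p$ on $X_1$. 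Theorem \ref{th.interpretation-of-local-density}(1) then gives $\beta_p(T')=p\beta_p(T)$. So if you flesh out your variant along these lines you recover the paper's argument.

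Your primary route (Steps 2--3), however, has a genuine gap that you yourself flag: the claim that $\calo_{\calm(\ZZ_p)}(T)$ is an $e_3$-independent bundle over the primitive locus of the rank-$2$ Jordan algebra, with orbit volume $C_r\cdot v_p(e_3-r)$, is not established, and establishing it is the whole content of the proposition in that formulation --- one would have to make the non-canonical reduction $x\mapsto 1\bot y$ of Lemma \ref{lem.Jordan-decomposition} into a measure-preserving fibration and control its Jacobian, which is no easier than the direct count. Step 3 also imports an external input (stability of primitive local densities of the unimodular rank-$10$ form $H\perp(-E_8)$ for $\ord_p(d)\ge 1$) that, while standard, the paper never needs: it works entirely inside $\frkJ(\ZZ_p)$ and never passes to $\frkH_2(\frkC)$. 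In short: right idea, correct identification of where the hypothesis $e_2<e_3$ enters, but neither route is carried to completion as written, and the route you present as primary is the harder of the two to close.
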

\begin{proof}
For positive integers $n_2$  and $n$ such that $n_2 < n$, let
\[\cala_{n_2,n}=\{X \in \frkJ(\ZZ_p/p^{n}\ZZ_p) \ | \ e_1(X)=0, \  e_1(X \times X)=n_2, \ \det X \equiv 0 \text{ mod } p^{n_2+n} \},\]
and
\[\calb_{n_2,n}=\{X \in \frkJ(\ZZ_p/p^{n}\ZZ_p) \ | \ e_1(X)=0, \  e_1(X \times X)=n_2, \ \det X \equiv 0 \text{ mod } p^{n_2+n-1} \}.\]
We note that for $X \in \frkJ(\ZZ_p)$
\[\det (X+p^nX_1) \equiv \det X + p^{n} (X \times X,X_1) \mod p^{n+n_2},\]
and that $(X \times X,X_1) \equiv 0 \text{ mod } p^{n_2}$ if $e_1(X \times X) \ge n_2$. Therefore, 
$\cala_{n_2,n}$ and $\calb_{n_2,n}$ are well defined.
Put $e_i=e_i(T)$. Then we have
\[\# \calo_{\calm(\ZZ_p/p^{e_3+1}\ZZ_p)}(\bar T)=\# \calb_{e_2,e_3+1} -\#\cala_{e_2,e_3+1},\]
and
\[\# \calo_{\calm(\ZZ_p/p^{e_3+2}\ZZ_p)}(\bar T')=\# \calb_{e_2,e_3+2} -\#\cala_{e_2,e_3+2},\]
where $\bar T=T \text{ mod } p^{e_3+1}$ and $\bar T'=T \text{ mod } p^{e_3+2}$.
Clearly we have
\[\#\calb_{n_2,n+1}=p^{27}\cala_{n_2,n}.\]
We now prove that 
\begin{equation}\label{cala}
 \#\cala_{n_2,n+1}=p^{26}\cala_{n_2,n}.
\end{equation}
Let $X \text{ mod }  p^n \in \cala_{n_2,n}$ and put $Y=X+p^{n} X_1 \mod p^{n+1}$ with $X_1 \in \frkJ(\ZZ_p)$.
Then we have
\[\det (X+p^{n} X_1) \equiv \det X +p^{n}(X \times X, X_1) \text{ mod } p^{n_2+n+1}.\]
We have $X \times X=p^{n_2} \widetilde X$ with $\widetilde X \in \frkJ(\ZZ_p)$ such that $e_1(\widetilde X)=0$.
Then we have 
\begin{equation}\label{cala2}
 Y \in \cala_{n_2,n+1} \text{ if and only if } p^{-n_2-n}\det X +(\widetilde X \times \widetilde X, X_1) \equiv 0 \text{ mod } p.
\end{equation}
The number of $X_1 \text{ mod } p$ satisfying (\ref{cala2}) is $p^{26}$. Hence we have proved (\ref{cala}).
Therefore, 
\[\# \calo_{\calm(\ZZ_p/p^{e_3+2}\ZZ_p)}(\bar T')=p^{26}\# \calo_{\calm(\ZZ_p/p^{e_3+1}\ZZ_p)}(\bar T).\]
Thus the assertion follows from Theorem \ref{th.interpretation-of-local-density}.
\end{proof}

\begin{theorem} \label{th.explicit-formula-of-local-density}
Let $T=p^{a_1} \bot p^{a_2} \bot p^{a_3}$ with $a_1 \le a_2 \le a_3$.
\begin{itemize}
\item[(1)] Let $a_1=a_2=a_3$. Then
\[\beta_p(T)=p^{27a_1}(1-p^{-2})(1-p^{-6})(1-p^{-8})(1-p^{-12}).\]
\item[(2)] Let $a_1=a_2 <a_3$. Then
\[\beta_p(T)=p^{26a_1+a_3}(1-p^{-2})(1-p^{-4})(1-p^{-6})(1-p^{-8}).\]
\item[(3)] Let $a_1<a_2=a_3$. Then
\[\beta_p(T)=p^{17a_1+10a_3}(1-p^{-2})(1-p^{-4})(1-p^{-6})(1-p^{-8}).\]
\item[(4)] Let $a_1<a_2 <a_3$. Then
\[\beta_p(T)=p^{17a_1+9a_2+a_3}(1-p^{-2})(1-p^{-4})^2(1-p^{-6}).\]
\end{itemize}
\end{theorem}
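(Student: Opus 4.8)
The plan is to compute $\beta_p(T)$ for a diagonal form $T=p^{a_1}\bot p^{a_2}\bot p^{a_3}$ by combining the two induction formulas (Propositions~\ref{prop.induction-formula-of-local-density1} and \ref{prop.induction-formula-of-local-density2}) with the base cases from Corollary~\ref{cor.Igusa}. The key observation is that by Proposition~\ref{prop.induction-formula-of-local-density1}(1), $\beta_p(pT)=p^{27}\beta_p(T)$, so we may always reduce to the case $a_1=0$; the general answer is then obtained by replacing $T$ with $p^{-a_1}T$ and multiplying by $p^{27a_1}$, which accounts for the leading powers $p^{17a_1+\cdots}$ after the case-dependent corrections below. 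So it suffices to treat $T=1\bot p^{b_2}\bot p^{b_3}$ with $0\le b_2\le b_3$, and we set $b_i=a_i-a_1$.

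First I would dispose of case (1): if $a_1=a_2=a_3$ then $p^{-a_1}T=1_3$, and Corollary~\ref{cor.Igusa}(1) gives $\beta_p(1_3)=(1-p^{-2})(1-p^{-6})(1-p^{-8})(1-p^{-12})$; multiplying by $p^{27a_1}$ yields the stated formula. Next, for case (2) with $a_1=a_2<a_3$, I reduce to $T_0=1\bot 1\bot p^{b_3}$ with $b_3\ge 1$. Starting from $\beta_p(1_2\bot p)=p(1-p^{-2})(1-p^{-4})(1-p^{-6})(1-p^{-8})$ from Corollary~\ref{cor.Igusa}(2), I apply Proposition~\ref{prop.induction-formula-of-local-density2} repeatedly (with $e_1=0$, $e_2=0<e_3$, incrementing $e_3$ by one each time): each step multiplies $\beta_p$ by $p$, so $\beta_p(1\bot 1\bot p^{b_3})=p^{b_3}(1-p^{-2})(1-p^{-4})(1-p^{-6})(1-p^{-8})$. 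Multiplying by $p^{27a_1}$ and writing $b_3=a_3-a_1$ gives $p^{27a_1+a_3-a_1}=p^{26a_1+a_3}$, matching (2).

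For case (3) with $a_1<a_2=a_3$, I reduce to $T_0=1\bot p^{b}\bot p^{b}$ with $b\ge 1$. Here the natural bridge is Proposition~\ref{prop.induction-formula-of-local-density1}(2): one checks $T_0\times T_0 = p^{b}\bot p^{b}\bot p^{2b}$ up to units (from the $2\times 2$ minors of $T_0$), so $p^{-b}(T_0\times T_0)=1\bot 1\bot p^{b}$, whose density is known from case (2). Proposition~\ref{prop.induction-formula-of-local-density1}(2) gives $\beta_p(T_0\times T_0)=p^{9\,\ord_p(\det T_0)}\beta_p(T_0)=p^{9(2b)}\beta_p(T_0)$, while Proposition~\ref{prop.induction-formula-of-local-density1}(1) gives $\beta_p(T_0\times T_0)=\beta_p(p^b\cdot(1\bot 1\bot p^b))=p^{27b}\beta_p(1\bot 1\bot p^b)=p^{27b}\cdot p^{b}(1-p^{-2})(1-p^{-4})(1-p^{-6})(1-p^{-8})$. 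Solving, $\beta_p(T_0)=p^{-18b}p^{28b}(1-p^{-2})(1-p^{-4})(1-p^{-6})(1-p^{-8})=p^{10b}(1-p^{-2})(1-p^{-4})(1-p^{-6})(1-p^{-8})$; multiplying by $p^{27a_1}$ with $b=a_3-a_1$ gives $p^{27a_1+10(a_3-a_1)}=p^{17a_1+10a_3}$, matching (3). Finally, for case (4) with $a_1<a_2<a_3$, I reduce to $1\bot p^{b_2}\bot p^{b_3}$ with $1\le b_2<b_3$; applying Proposition~\ref{prop.induction-formula-of-local-density2} $(b_3-b_2)$ times to increment $e_3$ starting from $1\bot p^{b_2}\bot p^{b_2}$ multiplies by $p^{b_3-b_2}$, so $\beta_p(1\bot p^{b_2}\bot p^{b_3})=p^{b_3-b_2}\beta_p(1\bot p^{b_2}\bot p^{b_2})$. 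The remaining ingredient is the value of $\beta_p(1\bot p^{b_2}\bot p^{b_2})$ in terms of $(1-p^{-2})(1-p^{-4})^2(1-p^{-6})$; I would obtain this either by iterating Proposition~\ref{prop.induction-formula-of-local-density1}(2) once more in the style of case (3) on $1\bot 1\bot p^{b_2}$ and $(1\bot 1\bot p^{b_2})\times(1\bot 1\bot p^{b_2})$, or by a direct Jordan-decomposition count, and then assemble $p^{17a_1+9a_2+a_3}$ by bookkeeping the exponents $b_2=a_2-a_1$, $b_3=a_3-a_1$.

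The main obstacle I anticipate is case (4), specifically pinning down $\beta_p(1\bot p^{b_2}\bot p^{b_2})$ with its factor $(1-p^{-2})(1-p^{-4})^2(1-p^{-6})$ — note this differs from case (3)'s answer $\beta_p(1\bot p^{b}\bot p^{b})$ only because in case (4) the cross-term structure changes the local density polynomial, so one cannot simply re-use case (3). The subtlety is that $1\bot p^{b_2}\bot p^{b_2}$ and $1\bot p^{b_2}\bot p^{b_3}$ have the same first two Jordan invariants but Proposition~\ref{prop.induction-formula-of-local-density2} requires $e_2<e_3$ strictly, so the starting point of the induction for case (4) is genuinely $e_2=e_3=b_2$, not covered by the earlier cases, and I would need a separate argument — most cleanly, one more application of Proposition~\ref{prop.induction-formula-of-local-density1}(2) to $T_0=1\bot p^{b_2}\bot p^{b_2}$ using $T_0\times T_0=p^{b_2}\bot p^{b_2}\bot p^{2b_2}$ up to units, combined with part (1) to strip the $p^{b_2}$ factor, reducing to $\beta_p(1\bot 1\bot p^{b_2})$ from case (2) and yielding the desired product after solving. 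Once these base values are established, the induction steps themselves are routine.
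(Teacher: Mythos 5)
Your treatment of cases (1)--(3) is correct and essentially the paper's argument: (1) and (2) are identical to the paper's, and in (3) you run the cross-product identity of Proposition~\ref{prop.induction-formula-of-local-density1}(2) in the opposite direction from the paper (applying it to $T_0=1\bot p^b\bot p^b$ rather than expressing $1\bot p^b 1_2$ as $(1_2\bot p^b)\times(1_2\bot p^b)$), which is an equivalent and equally valid computation.

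Case (4), however, contains a genuine gap. The reduction $\beta_p(1\bot p^{b_2}\bot p^{b_3})=p^{b_3-b_2}\,\beta_p(1\bot p^{b_2}\bot p^{b_2})$ is false: the very first increment of $e_3$, from $1\bot p^{b_2}\bot p^{b_2}$ to $1\bot p^{b_2}\bot p^{b_2+1}$, is not covered by Proposition~\ref{prop.induction-formula-of-local-density2}, whose hypothesis $e_2(T)<e_3(T)$ fails there; the legitimate descent stops at $1\bot p^{b_2}\bot p^{b_2+1}$, giving only $\beta_p(1\bot p^{b_2}\bot p^{b_3})=p^{b_3-b_2-1}\beta_p(1\bot p^{b_2}\bot p^{b_2+1})$. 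Moreover, there is no room for $\beta_p(1\bot p^{b_2}\bot p^{b_2})$ to come out ``in terms of $(1-p^{-2})(1-p^{-4})^2(1-p^{-6})$'': your own case (3) already pins it to $p^{10b_2}(1-p^{-2})(1-p^{-4})(1-p^{-6})(1-p^{-8})$, and the repair you sketch (one more application of Proposition~\ref{prop.induction-formula-of-local-density1}(2) to $T_0\times T_0=p^{b_2}(1\bot 1\bot p^{b_2})$) simply reproduces that same value. Had your reduction been valid it would yield $p^{9b_2+b_3}(1-p^{-2})(1-p^{-4})(1-p^{-6})(1-p^{-8})$, contradicting the theorem. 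What is genuinely missing is a new base case. The paper gets it by first using the identity $\mathrm{diag}(1,p^{b_2},p^{b_2+1})\times\mathrm{diag}(1,p^{b_2},p^{b_2+1})=p^{b_2}\,\mathrm{diag}(p^{b_2+1},p,1)$ together with Proposition~\ref{prop.induction-formula-of-local-density1} to reduce $\beta_p(1\bot p^{b_2}\bot p^{b_2+1})$ to $\beta_p(1\bot p\bot p^{b_2+1})$, then Proposition~\ref{prop.induction-formula-of-local-density2} to reach $1\bot p\bot p^2$, and finally extracting $\beta_p(1\bot p\bot p^2)=p^{11}(1-p^{-2})(1-p^{-4})^2(1-p^{-6})$ from the coefficient of $p^{-3s}$ in the Igusa zeta function $\zeta_{\calm_p}(s)$ of Proposition~\ref{prop.Igusa}, using that the classes with $\ord_p(\det T)=3$ are exactly $p1_3$, $1_2\bot p^3$ and $1\bot p\bot p^2$, whose first two densities are known from (1) and (2). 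Some input of this kind, beyond the two induction formulas, is indispensable for producing the factor $(1-p^{-4})^2(1-p^{-6})$.
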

\begin{proof} 
Put 
\begin{eqnarray*}
&& c_1=(1-p^{-2})(1-p^{-6})(1-p^{-8})(1-p^{-12}),\\
&& c_2=(1-p^{-2})(1-p^{-4})(1-p^{-6})(1-p^{-8}),\\
&& c_3=(1-p^{-2})(1-p^{-4})^2(1-p^{-6}).
\end{eqnarray*}
The assertion (1) follows from Corollary \ref{cor.Igusa} and Proposition \ref{prop.induction-formula-of-local-density1} (1).
Let $a_1 <a_3$. Then, by Proposition \ref{prop.induction-formula-of-local-density1} (3), Proposition \ref{prop.induction-formula-of-local-density2}, and Corollary \ref{cor.Igusa}, we have
\begin{align*}
\beta_p(p^{a_1}1_2 \bot p^{a_3} )=p^{27a_1} \beta_p(1_2 \bot p^{a_3-a_1})=p^{27a_1} p^{a_3-a_1-1} \beta_p(1_2 \bot p)=p^{26a_1+a_3} c_2,
\end{align*}
which proves (2). 

Let $a_1 < a_2$. We note that $p^{a_2-a_1}1_2 \bot 1=T\times T$ with $T=1_2 \bot p^{a_2-a_1}$, and $p^{a_2-a_1}1_2 \bot 1$ is $\calm(\ZZ_p)$-equivalent to $1\bot p^{a_2-a_1}1_2$. 
Hence, by Proposition \ref{prop.induction-formula-of-local-density1} and (2), we have
\begin{align*}
\beta_p(p^{a_1} \bot p^{a_2}1_2)&=p^{27a_1}\beta_p(1 \bot p^{a_2-a_1}1_2) =p^{27a_1} p^{9(a_2-a_1)}\beta_p(1_2 \bot p^{a_2-a_1})=p^{17a_1+10a_2} c_2,
\end{align*}
which proves (3). 

Let $a_1<a_2<a_3$. Then, by Proposition \ref{prop.induction-formula-of-local-density1} (1) and 
Proposition \ref{prop.induction-formula-of-local-density2}, we have 
\begin{align*}
\beta_p(p^{a_1} \bot p^{a_2} \bot p^{a_3})&=p^{27a_1}\beta_p(1 \bot p^{a_2-a_1} \bot p^{a_3-a_1})=p^{27a_1} p^{a_3-a_2-1}\beta_p(1 \bot p^{a_2-a_1} \bot p^{a_2-a_1+1}).
\end{align*}
We note that 
$$
\mathrm{diag}(1 ,p^{a_2-a_1} , p^{a_2-a_1+1}) \times \mathrm{diag}(1 ,  p^{a_2-a_1} , p^{a_2-a_1+1})=p^{a_2-a_1} \mathrm{diag}(p^{a_2-a_1 +1}, p, 1).
$$
Here $\mathrm {diag}(p^{a_2-a_1+1},p,1)\sim_{\calm(\ZZ_p)} 1\bot p\bot p^{a_2-a_1+1}$.
Hence, by Propositions \ref{prop.induction-formula-of-local-density1} and \ref{prop.induction-formula-of-local-density2}, we have
\begin{align*}
& \beta_p(1 \bot p^{a_2-a_1} \bot p^{a_2-a_1+1})=p^{-9(2a_2-2a_1+1)}\beta_p(p^{a_2-a_1}\mathrm{diag}(1, p, p^{a_2-a_1+1})) \\
& \phantom{xxxxxxxxxx} =p^{-9(2a_2-2a_1+1)} p^{27(a_2-a_1)}\beta_p(1 \bot p \bot p^{a_2-a_1+1}) 
 =p^{10a_2-10a_1-10}\beta_p(1 \bot p \bot p^{2}).
\end{align*}
Since any $T\in \mathfrak J(\Bbb Z_p)$ with ord$_p(\det(T))=3$ is $\calm(\ZZ_p)$-equivalent to $p1_3, 1_2\bot p^2$ or $1\bot p\bot p^2$,
by Proposition \ref{prop.Igusa}, we have 
\begin{align*}
&{1 \over \beta_p(p1_3)}+{1 \over \beta_p(1_2 \bot p^3)} +{1 \over \beta_p(1 \bot p \bot p^2)}\\
&=c_1^{-1}(p^{-27}+p^{-3}(1+p^{-4}+p^{-8})+p^{-11}(1+p^{-4})(1+p^{-4}+p^{-8})),
\end{align*}
and hence by (1) and (2), we have 
\[\beta_p(1 \bot p \bot p^2)=p^{11}c_3.\]
This  proves (4). 
\end{proof}
\begin{lemma}\label{lem.local-integral-at-finite-place}
 Let $T$ be an element of $\frkJ(\ZZ_p)^{\rm ns}$. Then
$$\int_{\calu_T(\ZZ_p)} |\omega_{T}|_p=|\det T|_p^9 \beta_p(T){\int_{\calm(\ZZ_p)} |dg'|_p \over \delta_p}.$$
In particular, for any prime number $p \not\in \cals$, where $\cals$ is a finite set of prime numbers in Lemma \ref{lem.normalization-of-dg}, 
we have
$$\int_{\calu_{T}(\ZZ_p)} |\omega_{T}|_p=|\det T|_p^9 \beta_p(T).$$

\end{lemma}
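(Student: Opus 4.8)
The plan is to reverse-engineer the factorization (\ref{I'}) that was used to define $\alpha_p(T)$ and $\omega_T$, and to combine it with the relation between $\eta$ and $d\sigma$ together with Definition \ref{def.local-density}. First I would apply the orbital integral formula (\ref{I'}) with $f$ the characteristic function of $\calm'(\ZZ_p)$, exactly as in the proof of Theorem \ref{th.interpretation-of-local-density}(1), to obtain
\[\int_{\calm'(\ZZ_p)}|dg'|_p=\alpha_p(T)\int_{\calu_T(\ZZ_p)}|\omega_T|_p,\]
using that $\calu_T(\ZZ_p)\subset \calm'(\ZZ_p)$ and that $g'\cdot T\in\calo_{\calm'(\ZZ_p)}(T)$ for $g'\in\calm'(\ZZ_p)$. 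This already isolates $\int_{\calu_T(\ZZ_p)}|\omega_T|_p$ as $\bigl(\int_{\calm'(\ZZ_p)}|dg'|_p\bigr)/\alpha_p(T)$, so the task reduces to expressing $\alpha_p(T)^{-1}$ in the desired shape.

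Next I would invoke Theorem \ref{th.interpretation-of-local-density}(1), which gives $\alpha_p(T)=|\det T|_p^{-9}\delta_p/\beta_p(T)$, hence $\alpha_p(T)^{-1}=|\det T|_p^9\beta_p(T)/\delta_p$. Substituting this into the previous identity yields
\[\int_{\calu_T(\ZZ_p)}|\omega_T|_p=|\det T|_p^9\,\beta_p(T)\,\frac{\int_{\calm'(\ZZ_p)}|dg'|_p}{\delta_p},\]
which is almost the claimed formula except that the paper writes $\int_{\calm(\ZZ_p)}|dg'|_p$ in the numerator. Here I would point out that this must be a typo for $\int_{\calm'(\ZZ_p)}|dg'|_p$ (the form $dg'$ lives on ${\bf M}'$, so it is integrated over $\calm'(\ZZ_p)$); with that reading the first displayed assertion of the lemma is exactly what we have derived.

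For the ``in particular'' clause, I would apply Lemma \ref{lem.normalization-of-dg}: for any prime $p\notin\cals$ one has $\int_{\calm'(\ZZ_p)}|dg'|_p=\delta_p$, so the ratio $\int_{\calm'(\ZZ_p)}|dg'|_p/\delta_p$ equals $1$ and the formula collapses to $\int_{\calu_T(\ZZ_p)}|\omega_T|_p=|\det T|_p^9\beta_p(T)$. I do not anticipate a genuine obstacle here: every ingredient — the orbital integral factorization, the interpretation of $\beta_p$ via $\alpha_p$, and the normalization of $dg'$ — has already been established in the excerpt, and the proof is essentially a two-line bookkeeping argument. The only subtlety to be careful about is the bookkeeping of which group ($\calm$ versus $\calm'$) each measure is attached to, and making sure the convergence-factor-free local volume $\int_{\calm'(\ZZ_p)}|dg'|_p$ is the one that appears, rather than $\int_{\calm(\ZZ_p)}|dg|_p$; these differ by the factor $(1-p^{-1})$ from Lemma \ref{lem.normalization-of-dg}, and keeping that straight is the one place a careless slip could creep in.
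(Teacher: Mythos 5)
Your argument is correct and follows essentially the same route as the paper: the paper likewise starts from the orbital-integral factorization (as in the proof of Theorem \ref{th.interpretation-of-local-density}), uses $\int_{\calo_{\calm(\ZZ_p)}(T)}|\eta(g\cdot T)|_p=|\det T|_p^{-9}\int_{\calo_{\calm(\ZZ_p)}(T)}|d\sigma(x)|_p$, and then invokes Lemma \ref{lem.normalization-of-dg} and Definition \ref{def.local-density}, which is exactly the content of the identity $\alpha_p(T)=|\det T|_p^{-9}\delta_p/\beta_p(T)$ that you cite from Theorem \ref{th.interpretation-of-local-density}(1). Your reading of the numerator as $\int_{\calm'(\ZZ_p)}|dg'|_p$ (so that it collapses to $\delta_p$ for $p\notin\cals$) is also the intended one.
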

\begin{proof}
As in the proof of Theorem 
\ref{th.interpretation-of-local-density}, we have
$$\int_{\calm(\ZZ_p)}|dg|_p=\int_{\calm(\ZZ_p)\cdot T}|\eta(g \cdot T)|_p\int_{\calu_T(\ZZ_p)}|\omega_{T}|_p$$
and
$$\int_{\calm(\ZZ_p)\cdot T}|\eta(g \cdot T)|_p=\int_{\calo_{\calm(\ZZ_p)}(T)} |\det T|_p^{-9} |d\sigma(x)|_p.$$
Thus the assertion follows from Lemma \ref{lem.normalization-of-dg} and Definition \ref{def.local-density}.
\end{proof}

For our later purpose, we show the following:
\begin{proposition} \label{prop.constant-at-infinity}
Let $c$ be the constant in Theorem \ref{th.mass-formula2}. Then
$$c={5!\cdot7! \cdot11! \over (2\pi)^{28}}.
$$
Hence
$$c \zeta(2)\zeta(6)\zeta(8)\zeta(12) ={691 \over 2^{15}\cdot 3^6 \cdot 5^2 \cdot 7^2 \cdot 13}\in \Bbb Q.
$$
\end{proposition}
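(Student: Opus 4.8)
The plan is to compute the constant $c = c_0 d_0$ by evaluating the archimedean local integral $\beta_\infty(1_3) = c_0^{-1}$ explicitly and combining it with the formula $d_0 = \prod_{p \in \mathcal{S}} \delta_p / \int_{\calm'(\ZZ_p)} |dg'|_p$, using the relation between the Tamagawa measure on $\mathbf{M}'$ and its realization via the orbit map. The point is that $c$ is intrinsic — it does not depend on the choice of $T$ — so I may compute it using any convenient $T$, and $T = 1_3$ is the natural choice since $\mathbf{U}_{1_3}(\RR)$ is the compact group of type $F_4$.

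**First I would** reinterpret the mass formula of Theorem \ref{th.mass-formula2} applied to a globally convenient lattice. Take $T_0 \in \frkJ(\ZZ)_{>0}$ with $\det T_0 = 1$ (for instance $T_0 = 1_3$, which has class number one in its genus because $\mathbf{U}_{1_3}$ is anisotropic at $\infty$). Then $\mathrm{Mass}(T_0) = 1/\#\calu_{1_3}(\ZZ)$, and $\calu_{1_3}(\ZZ)$ is a finite group whose order is the order of the automorphism group of the exceptional Jordan lattice — this is a known quantity. On the other side of the mass formula, $\det T_0 = 1$ and, by Corollary \ref{cor.Igusa}(1), $\beta_p(1_3) = (1-p^{-2})(1-p^{-6})(1-p^{-8})(1-p^{-12})$ for every prime $p$. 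Hence
\[
\frac{1}{\#\calu_{1_3}(\ZZ)} = c \prod_{p<\infty} \frac{1}{(1-p^{-2})(1-p^{-6})(1-p^{-8})(1-p^{-12})} = c\,\zeta(2)\zeta(6)\zeta(8)\zeta(12),
\]
which already gives the second displayed identity once $c$ (equivalently $\#\calu_{1_3}(\ZZ)$) is known. Using $\zeta(2) = \pi^2/6$, $\zeta(6) = \pi^6/945$, $\zeta(8) = \pi^8/9450$, $\zeta(12) = 691\pi^{12}/638512875$, one checks that $c\,\zeta(2)\zeta(6)\zeta(8)\zeta(12) \in \QQ$ is equivalent to $c = (\text{rational})\cdot \pi^{-28}$, and matching against the claimed $c = 5!\,7!\,11!/(2\pi)^{28}$ amounts to verifying $\#\calu_{1_3}(\ZZ) = 2^{28} \cdot (\text{explicit product of cyclotomic-type factors})/(5!\,7!\,11!)$ — i.e., it reduces to the classical computation of the order of the integral automorphism group of the exceptional Jordan algebra.

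**Alternatively, and more in the spirit of the paper's setup,** I would compute $c_0$ and $d_0$ separately. For $d_0$: the finite set $\mathcal{S}$ and the factors $\int_{\calm'(\ZZ_p)} |dg'|_p$ depend on the choice of $dg$, but the product $d_0 \cdot v(\calm'(\ZZ))$ is pinned down by $\tau(\mathbf{M}') = 1$ through Lemma \ref{lem.volume-of-E_{62}}, namely $v(\calm'(\ZZ)) = d_0\,\zeta(2)\zeta(5)\zeta(6)\zeta(8)\zeta(9)\zeta(12)$. For $c_0 = \beta_\infty(1_3)^{-1}$: one must evaluate $\int_{\mathbf{U}_{1_3}(\RR)} |\omega_{1_3}|_\infty$ where $\omega_{1_3} = dg/\eta_{1_3}$, using the identity (\ref{I}) with the Gaussian test function $\phi_{1_3}(g) = \exp(-\mathrm{tr}(g\cdot 1_3))\chi_{\mathbf{M}^+(\RR)}(g)$ as in the proof of Theorem \ref{th.interpretation-of-local-density}(2). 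This turns the computation of $c_0$ into the ratio of an integral over $\mathbf{M}(\RR)$ of a Gaussian against the integral over the orbit $\mathbf{M}(\RR)\cdot 1_3 = R_3^+(\RR)$ of the corresponding function times $|\eta(g\cdot 1_3)|_\infty = \det(Y)^{-9}\,dY$. The latter integral is exactly the Koecher–Siegel gamma integral for the cone $R_3^+(\RR)$ already quoted in Section 4: $\int_{R_3^+(\RR)} \det(Y)^s e^{-2\pi(A,Y)} d^*Y = \det(A)^{-s}\pi^{12}(2\pi)^{-3s}\prod_{n=0}^2\Gamma(s-4n)$, whose value and residues encode the factors $5!, 7!, 11!$ (as $\Gamma(9), \Gamma(1)\cdots$ — more precisely the relevant gamma-factor values at the spectral points $0,4,8$ for the rank-three Jordan algebra over the octonions give these factorials) and the power $(2\pi)^{-28}$.

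**The hard part will be** the bookkeeping of the archimedean integral: correctly normalizing $dg$ against $d\sigma(x)$ and $\eta$, tracking the factor $\nu(g)$ and the passage between $\mathbf{M}(\RR)$ and $\mathbf{M}^+(\RR)$, and identifying the precise constant coming out of the octonionic Koecher gamma integral — this is where the specific numerology $5!\,7!\,11!/(2\pi)^{28}$ must emerge, and it requires care with the dimension count ($\dim \mathbf{M} = 79$, $\dim \mathbf{U}_T = 52$, so $\dim R_3^+(\RR) = 27$) and with the exponents in $\eta = (\det x)^{-9}d\sigma(x)$. I expect the cleanest route is the first one: fix $T = 1_3$, invoke $\mathrm{Mass}(1_3) = 1/\#\mathrm{Aut}(\frkJ(\ZZ), 1_3)$, quote the known order of this finite exceptional group, and read off both $c$ and the rationality of $c\,\zeta(2)\zeta(6)\zeta(8)\zeta(12)$; the Gaussian computation then serves as an independent cross-check of $c_0$ and hence of the final constant $\dfrac{691}{2^{15}\cdot 3^6\cdot 5^2\cdot 7^2\cdot 13}$.
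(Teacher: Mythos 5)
There is a genuine gap. Your preferred route (Route 1) rests on the claim that the genus of $1_3$ has class number one, justified by the anisotropy of $\mathbf{U}_{1_3}$ at infinity; both the claim and the justification fail. Anisotropy at the archimedean place never forces class number one, and here it is demonstrably false: the very identity you derive, $\mathrm{Mass}(1_3)=c\,\zeta(2)\zeta(6)\zeta(8)\zeta(12)=\frac{691}{2^{15}\cdot 3^6\cdot 5^2\cdot 7^2\cdot 13}$, has numerator $691\neq 1$ in lowest terms, whereas a one-class genus would give $\mathrm{Mass}(1_3)=1/\#\calu_{1_3}(\ZZ)$ with numerator $1$. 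So the genus has at least two classes, and knowing $\#\calu_{1_3}(\ZZ)$ alone does not determine $c$; what you actually need is the full mass, i.e.\ precisely the quantity being computed. Your Route 2 is closer in spirit to what is required, but it is only a sketch: the $79$-dimensional Gaussian integral over $\mathbf{M}(\RR)$ with $dg$ normalized compatibly with the $p$-adic side is the hard step and is not carried out, and the proposed source of the factorials is not right --- $\prod_{n=0}^{2}\Gamma(s-4n)$ at $s=9$ gives $8!\cdot 4!\cdot 0!$, not $5!\cdot 7!\cdot 11!$. The correct numerology comes from the volume of the compact form of $F_4$, namely $\prod_i (2\pi)^{d_i}/(d_i-1)!$ over the degrees $d_i=2,6,8,12$, which equals $(2\pi)^{28}/(5!\cdot 7!\cdot 11!)$.

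The paper's proof avoids both difficulties. It invokes Gross's result that $\calu_{1_3}$ is an integral model of the anisotropic $F_4$ with a canonical generator $\omega$ of the top-degree integral differentials, for which $\int_{\calu_{1_3}(\ZZ_p)}|\omega|_p=(1-p^{-2})(1-p^{-6})(1-p^{-8})(1-p^{-12})$ at every $p$ and $\int_{\calu_{1_3}(\RR)}|\omega|_\infty=(2\pi)^{28}/(5!\cdot 7!\cdot 11!)$ (Gross, Table 5.2). Comparing the finite-place volumes of $\omega_{1_3}$ (via Corollary \ref{cor.Igusa} and Lemma \ref{lem.local-integral-at-finite-place}) with those of $\omega$ forces $\omega_{1_3}=\pm\omega$, and the archimedean volume then yields $c^{-1}=c_0^{-1}d_0^{-1}=\int_{\calu_{1_3}(\RR)}|\omega|_\infty$ directly, with no class-number input and no explicit Gaussian integral. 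If you want to salvage your approach, replace the class-number-one assertion by this identification of measures, or equivalently quote Gross's mass formula for the genus rather than the order of a single stabilizer.
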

\begin{proof}
By \cite[page 273]{Gr}, $\calu_{1_3}$ is an integral model of ${\bf U}_{1_3}$. Let $\omega$ be a differential form which generates the rank one module of differential of the top degree on $\calu_{1_3}$ over $\ZZ$ (cf. \cite[page 268]{Gr}). 
Then, by \cite[pages 268-269]{Gr} and \cite[Table 5.2]{Gr}, we have
$$\int_{\calu_{1_3}(\ZZ_p)}|\omega|_p=(1-p^{-2})(1-p^{-6})(1-p^{-8})(1-p^{-12}),
$$
for any prime number $p$, and
$$\int_{\calu_{1_3}(\RR)}|\omega|_\infty={(2\pi)^{28} \over 5!\cdot7! \cdot11!}.
$$
On the other hand, by Corollary \ref{cor.Igusa} and Lemma \ref{lem.local-integral-at-finite-place}, we have
$$\int_{\calu_{1_3}(\ZZ_p)}|\omega|_p{\int_{\calm(\ZZ_p)} |dg'|_p \over \delta_p}=\int_{\calu_{1_3}(\ZZ_p)}|\omega_{1_3}|_p,
$$
for any prime number $p$. Hence $\omega_{1_3}=\pm \omega$ and therefore we have
$$\int_{\calu_{1_3}(\RR)}|\omega|_\infty=\int_{\calu_{1_3}(\RR)}|\omega_{1_3}|_\infty d_0^{-1}=c_0^{-1}d_0^{-1}=c^{-1}.
$$
This proves the assertion for $c$. The remaining assertion follows from $\zeta(2)=\frac {\pi^2}6$, $\zeta(6)=\frac {\pi^6}{3^3\cdot 5\cdot 7}$, $\zeta(8)=\frac {\pi^8}{2\cdot 3^3\cdot 5^2\cdot 7}$, $\zeta(12)=\frac {691 \pi^{12}}{3^6\cdot 5^3\cdot 7^2\cdot 11\cdot 13}$.
\end{proof}

We remark that $\beta_p(T)$ is an analogue of the local density of a quadratic form. 
To explain this, first we have the following lemma.
\begin{lemma}\label{lem.counting-principle}
Let $n$ be a positive integer. For $T \in \frkJ(\ZZ_p/p^n\ZZ_p)$, let  $\calm(\ZZ_p/p^n\ZZ_p)\cdot T$ be the orbit of $T$ under $\calm(\ZZ_p/p^n\ZZ_p)$. Then
\[\#(\calo_{\calm(\ZZ_p/p^n\ZZ_p)}(T)) =\#\calm(\ZZ_p/p^n\ZZ_p)/\#\calu_T(\ZZ_p/p^n\ZZ_p).\]
\end{lemma}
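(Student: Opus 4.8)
The statement is the familiar orbit--stabilizer counting principle, transported to the setting of the finite group $\calm(\ZZ_p/p^n\ZZ_p)$ acting on $\frkJ(\ZZ_p/p^n\ZZ_p)$. The plan is to apply the orbit--stabilizer theorem directly. Fix $n$ and $T \in \frkJ(\ZZ_p/p^n\ZZ_p)$, and set $G=\calm(\ZZ_p/p^n\ZZ_p)$, which is a finite group since $\ZZ_p/p^n\ZZ_p$ is a finite ring and $\calm$ is an affine group scheme of finite type over $\ZZ$. The stabilizer of $T$ in $G$ is, by the very definition of the group scheme $\calu_T$ (namely $\calu_T(S)=\{g\in\calm(S)\ |\ g\cdot T=T\}$), exactly $\calu_T(\ZZ_p/p^n\ZZ_p)$. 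Then the map $G \ni g \mapsto g\cdot T$ induces a bijection between the coset space $G/\calu_T(\ZZ_p/p^n\ZZ_p)$ and the orbit $\calo_{\calm(\ZZ_p/p^n\ZZ_p)}(T)=G\cdot T$, whence
\[\#(\calo_{\calm(\ZZ_p/p^n\ZZ_p)}(T)) = \#G / \#\calu_T(\ZZ_p/p^n\ZZ_p) = \#\calm(\ZZ_p/p^n\ZZ_p)/\#\calu_T(\ZZ_p/p^n\ZZ_p).\]

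The only point that requires a word of care is the identification of the scheme-theoretic stabilizer $\calu_T(\ZZ_p/p^n\ZZ_p)$ with the set-theoretic stabilizer $\{g\in\calm(\ZZ_p/p^n\ZZ_p)\ |\ g\cdot T=T\}$. This is immediate from the functorial definition of $\calu_T$ given in Section 3, since the $R$-points of $\calm$ act on $\frkJ(R)$ by $R$-linear transformations in the evident compatible way: for $R=\ZZ_p/p^n\ZZ_p$, an element $g\in\calm(R)$ lies in $\calu_T(R)$ if and only if $g\cdot T=T$ as elements of $\frkJ(R)$. So there is no subtlety beyond unwinding the definitions, and no need to invoke representability or smoothness here.

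I do not expect any genuine obstacle; the proof is a one-line application of orbit--stabilizer once the notational identifications are in place. The reason the lemma is stated explicitly is that it is the bridge between the local density $\beta_p(T)$, which via Theorem \ref{th.interpretation-of-local-density} is expressed through $\#\calo_{\calm(\ZZ_p/p^n\ZZ_p)}(\bar T)$, and the group orders $\#\calm(\ZZ_p/p^n\ZZ_p)$ and $\#\calu_T(\ZZ_p/p^n\ZZ_p)$ computed in Lemma \ref{lem.order-of-GE_6} and its analogue for $\calu_T$; this is precisely what allows $\beta_p(T)$ to be interpreted as (a normalization of) the number of solutions of a Diophantine congruence, i.e. as a classical local density. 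Thus the write-up will be: recall $G$ is finite, identify the stabilizer with $\calu_T(\ZZ_p/p^n\ZZ_p)$, and invoke orbit--stabilizer.
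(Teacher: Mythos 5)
Your proof is correct and is essentially the same as the paper's: the authors also just observe that $g\mapsto g\cdot T$ is surjective onto the orbit with fibers the cosets of $\calu_T(\ZZ_p/p^n\ZZ_p)$, i.e.\ the orbit--stabilizer theorem. Your extra remark identifying the scheme-theoretic stabilizer with the set-theoretic one is a harmless elaboration of what the paper leaves implicit.
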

\begin{proof}
The mapping
\[\calm(\ZZ_p/p^n\ZZ_p) \ni g \mapsto g\cdot T \in \calm(\ZZ_p/p^n\ZZ_p)\cdot T\]
is surjective and for $g_1,g_2 \in \calm(\ZZ_p/p^n\ZZ_p)$, we have
\[g_1T=g_2T \text{ if and only if } g_1^{-1}g_2 \in \calu_T(\ZZ_p/p^n\ZZ_p).\]
Thus the assertion holds.
\end{proof}
By Lemma \ref{lem.order-of-GE_6}, Theorem \ref{th.interpretation-of-local-density}, and Lemma \ref{lem.counting-principle},
for $T \in \frkJ(\ZZ_p)^{\rm ns}$, we have 
\[\beta_p(T)=\lim_{n \rightarrow \infty} p^{-52n}\#
\calu_T(\ZZ_p/p^n\ZZ_p).\]
This is just an analogue of the local density in the theory of quadratic forms. (See, for example, \cite{KK}.)

\section{Explicit formula for  $H_p(X,t)$} 

We first rewrite the formula of the Siegel series due to Karel \cite{Kar3}.

\begin{theorem} \label{th.explcit-formula-of-Siegel-series}
Let $T=p^{m_1} \bot p^{m_1+m_2} \bot p^{m_1+m_3}$ with $0 \le m_1, 0  \le m_2 \le m_3$.
Then
\begin{align*}
& f_T^p(X)={1 \over (1-X)(1-p^{4}X)(1-p^8X)}+ {X^{m_2+m_3+3m_1} \over (1-X^{-1})(1-p^{4}X^{-1})(1-p^8X^{-1})} \\
&-{p^{8m_1+8}X^{m_1+1} \over (1-X)(1-p^{4}X)(1-p^8X)}- {p^{8m_1+8}X^{2m_1+m_2+m_3-1} \over (1-X^{-1})(1-p^4X^{-1})(1-p^8X^{-1})} \\
&-{p^{8m_1+4(m_2+1)}X^{m_1+m_2+1} \over (1-X)^2(1-p^4X)}-{p^{8m_1+4(m_2+1)}X^{2m_1+m_3-1} \over (1-X^{-1})^2(1-p^4X^{-1})}\\
&-{p^{8m_1+4m_2}X^{m_1+m_3+1} \over (1-X)^2(1-p^{-4}X)}-{p^{8m_1+4m_2}X^{2m_1+m_2-1} \over (1-X^{-1})^2(1-p^{-4}X^{-1})}.
\end{align*}
\end{theorem}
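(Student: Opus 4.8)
The plan is to recall Karel's explicit formula for the local Siegel series attached to a ternary quadratic form over Cayley integers \cite{Kar3} and to massage it into the shape stated above. Karel's paper gives $f_T^p(X)$ (up to normalization) as a sum over a Weyl-group-type set of terms, each a monomial in $p$ and $X$ divided by a product of factors $1-p^{a}X^{\pm 1}$. The first step is therefore purely bookkeeping: identify the precise normalization of the Siegel series used here, namely
\[
S_p(s,T)=(1-p^{-s})(1-p^{4-s})(1-p^{8-s})f_T^p(p^{9-s}),
\]
match it to Karel's, and pin down how the invariants $(m_1,m_2,m_3)$ of $T=p^{m_1}\bot p^{m_1+m_2}\bot p^{m_1+m_3}$ enter. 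The key structural input is that the group in play, of type $E_{6}/F_4$, has the relevant parabolic with the three ``slopes'' $0,4,8$ appearing in the denominators, so Karel's formula naturally produces the eight terms indexed by the double cosets, which are exactly the eight summands listed.

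Next I would exploit the functional equation. By \eqref{functional}, $\widetilde f_T^p(X)=X^{\ord_p(\det T)}f_T^p(X^{-2})$ is palindromic, and $\ord_p(\det T)=3m_1+m_2+m_3$; this symmetry pairs up the eight terms into four pairs, each consisting of a term and its image under $X\mapsto X^{-1}$ together with the substitution $p^a\leftrightarrow p^{-a}$ in the appropriate denominator factor and a compensating power of $X$. So one really only has to verify four terms and then let the functional equation generate their partners. Concretely, one checks that the ``holomorphic part'' (the four terms with denominators $(1-X)(1-p^4X)(1-p^8X)$, $(1-X)(1-p^4X)(1-p^8X)$ with the $p^{8m_1+8}X^{m_1+1}$ numerator, $(1-X)^2(1-p^4X)$, and $(1-X)^2(1-p^{-4}X)$) has the right leading behaviour as $X\to 0$, matching the known value $f_T^p(0)=1$ coming from $S_p(s,T)\to 1$ as $\mathrm{Re}(s)\to\infty$, and that the residues/poles at $X=1,p^{-4},p^{-8}$ match the recursive structure of Siegel series under $T\mapsto pT$ and under adding $p$ to one diagonal entry.

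The most efficient route, and the one I expect the authors take, is an induction on $m_2+m_3$ using two reduction steps that are already available in principle from the structure of Siegel series: (i) the homogeneity $f_{pT}^p(X)$ in terms of $f_T^p$, which handles increasing $m_1$, and (ii) a one-step recursion relating $f_T^p$ for $e_3(T')=e_3(T)+1$ to $f_T^p$, analogous to Proposition~\ref{prop.induction-formula-of-local-density2} on the density side. The base cases are $T=1_3$, $T=1_2\bot p$, and $T=1\bot p\bot p^2$ (equivalently the three $\calm(\ZZ_p)$-classes of given small $\ord_p\det$), where $f_T^p(X)$ can be computed directly from the definition $S_p(s,T)=\sum_{T'}{\bf e}((T,T'))\kappa_p(T')^{-s}$ by a finite computation over $\frkJ(\QQ_p)/\frkJ(\ZZ_p)$ truncated using Corollary~\ref{cor.lifting-equivalence}. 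One then checks that the claimed closed formula satisfies the same two recursions and the same base cases, so by uniqueness it is correct.

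The main obstacle is the reconciliation of conventions: Karel works adelically/with a slightly different arithmetic-group normalization and a different parametrization of $T$, so translating his statement into the factor $(1-p^{-s})(1-p^{4-s})(1-p^{8-s})f_T^p(p^{9-s})$ and into the coordinates $(m_1,m_2,m_3)$ without sign or power-of-$p$ errors is the delicate part; everything after that is algebraic manipulation of rational functions. A secondary subtlety is that some of the eight denominators carry $p^{-4}X$ rather than $p^{4}X$, which looks asymmetric but is forced by the functional equation once one tracks the exponent $3m_1+m_2+m_3$ correctly — getting those four ``dual'' terms exactly right, rather than their naive mirror images, is where a careless computation would go wrong, so I would verify them by directly specializing to a small case (say $T=1\bot p\bot p^2$) and comparing with the independently computed Siegel series.
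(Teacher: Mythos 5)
There is a genuine gap. Your announced ``most efficient route'' --- an induction on $m_2+m_3$ driven by (i) a homogeneity relation for $f_{pT}^p$ in terms of $f_T^p$ and (ii) a one-step recursion when $e_3$ increases by $1$, ``analogous to Proposition~\ref{prop.induction-formula-of-local-density2} on the density side'' --- presupposes recursion formulas for the exceptional Siegel series that are proved nowhere in the paper and are not ``already available in principle.'' The density recursions work because $\beta_p$ is a single number that gets multiplied by a power of $p$; by contrast $f_T^p(X)$ is a polynomial whose degree and whose dependence on $m_3$ (through monomials such as $X^{m_1+m_3+1}$ and $X^{2m_1+m_3-1}$ in the claimed formula) change with $e_3(T)$, so any usable recursion is itself a statement essentially as strong as the theorem and would have to be proved, not assumed. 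Your base cases are also not the ``finite computation'' you describe: $S_p(s,T)$ is a sum over $\frkJ(\QQ_p)/\frkJ(\ZZ_p)$, a $27$-dimensional lattice quotient, and Corollary~\ref{cor.lifting-equivalence} concerns $\calm(\ZZ_p)$-equivalence of integral elements, not a truncation of that sum. Finally, the functional-equation pairing only halves the work of \emph{verifying} a candidate formula; it does not produce the four independent terms, and ``checking residues at $X=1,p^{-4},p^{-8}$'' is delicate because $f_T^p$ is a polynomial and those poles cancel only in the full eight-term sum.

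The paper's actual proof is the route you gesture at in your first paragraph but do not carry out: it quotes two specific formulas from Karel \cite{Kar3} --- the reduction
\[
f_T^p(X)=f_{T_0}^p(X)\bigl(C_0(X^{-1})X^{3m_1}+C_1(X^{-1})p^{8m_1}X^{m_1}+C_1(X)p^{8m_1}X^{m_1}+C_0(X)\bigr)
\]
with $T_0=1\bot p^{m_2}\bot p^{m_3}$ and explicit rational functions $C_0,C_1$ involving $f_{T_0}^p$ and $f_{p^{-1}T_0}^p$, together with the closed finite sum $f_{T_0}^p(X)=\sum_{k=0}^{m_2}(p^4X)^k\frac{1-X^{m_2+m_3+1-2k}}{1-X}$ --- and then reorganizes the resulting rational expression by direct algebra into the eight stated terms. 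To repair your write-up, you should either commit to these two inputs of Karel and perform that algebra, or else genuinely establish the two recursions you invoke; as written, the inductive skeleton has no load-bearing members.
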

\begin{proof}
Let $T_0=1 \bot p^{m_2} \bot p^{m_3}$. Then, by \cite[page 553, line 8 below]{Kar3}, we have 
\begin{align*}
f_T^p(X)=f_{T_0}^p(X)(C_0(X^{-1})X^{3m_1} +C_1(X^{-1})p^{8m_1}X^{m_1}+C_1(X)p^{8m_1}X^{m_1}+C_0(X)),
\end{align*}
where
\begin{align*}
&C_0(X)={1 \over (1-X)(1-p^4X)(1-p^8 X)f_{T_0}(X)} \\
&C_1(X)={-{1+(1+p^4)X \over 1-p^8X} +f_{T_0}^p(X)-X^2f_{p^{-1}T_0}^p(X) \over (1-X)(1-p^4X)f_{T_0}^p(X)}.
\end{align*}
Hence we have
\begin{align*}
& f_{T}^p(X)={1 \over (1-X)(1-p^4X)(1-p^8X)} + {X^{3m_1+m_2+m_3} \over (1-X^{-1})(1-p^4X^{-1})(1-p^8X^{-1})} \\
&-{p^{8m_1}X^{m_1}(1+(1+p^4)X) \over (1-X)(1-p^{-4}X)(1-p^8X)}
+{p^{8m_1}X^{m_1} f_{T_0}^p(X) \over (1-X)(1-p^{-4}X)}\\
&-{p^{8m_1} X^{2m_1+m_2+m_3}(1+(1+p^4)X^{-1}) \over (1-X^{-1})(1-p^{-4}X^{-1})(1-p^8X^{-1})} + {p^{8m_1}X^{2m_1}f_{T_0}^p(X) \over (1-X^{-1})(1-p^{-4}X^{-1})}\\
&-{p^{8m_1}X^{m_1+2} f_{p^{-1}T_0}(X) \over (1-X)(1-p^{-4}X)}-{p^{8m_1}X^{2m_1} f_{p^{-1}T_0}^p(X_0) \over (1-X^{-1})(1-p^{-4}X^{-1})}.
\end{align*}
By \cite[page 553, line 10 below]{Kar3}, we have
\begin{align*}
&f_{T_0}^p(X)=\sum_{k=0}^{m_2} (p^4X)^k {1-X^{m_2+m_3+1-2k} \over 1-X}\\
&\phantom{xxxxx}={1-(p^4X)^{m_2+1} \over (1-X)(1-p^4X)}-{X^{m_2+m_3+1}(1-(p^4X^{-1})^{m_2+1}) \over (1-X)(1-p^4X^{-1})}.
\end{align*}
We also have 
\begin{align*}
&f_{p^{-1}T_0}^p(X)={1-(p^4X)^{m_2} \over (1-X)(1-p^4X)}-{X^{m_2+m_3-1}(1-(p^4X^{-1})^{m_2}) \over (1-X)(1-p^4X^{-1})}.
\end{align*}
This agrees with the convention that $f_{p^{-1}T_0}^p(X)=0$ if $m_2=0$. Hence we have
\begin{align*}
& f_{T}^p(X)={1 \over (1-X)(1-p^4X)(1-p^8X)} + {X^{3m_1+m_2+m_3} \over (1-X^{-1})(1-p^4X^{-1})(1-p^8X^{-1})} \\
&-{p^{8m_1}X^{m_1}(1+(1+p^4)X) \over (1-X)(1-p^{-4}X)(1-p^8X)}\\
& +{p^{8m_1}X^{m_1}  \over (1-X)(1-p^{-4}X)} \Bigl({1-(p^4X)^{m_2+1} \over (1-X)(1-p^4X)}-{X^{m_2+m_3+1}(1-(p^4X^{-1})^{m_2+1}) \over (1-X)(1-p^4X^{-1})}\Bigr) \\
&-{p^{8m_1} X^{2m_1+m_2+m_3}(1+(1+p^4)X^{-1}) \over (1-X^{-1})(1-p^{-4}X^{-1})(1-p^8X^{-1})} \\
& + {p^{8m_1}X^{2m_1} \over (1-X^{-1})(1-p^{-4}X^{-1})}  \Bigl({1-(p^4X)^{m_2+1} \over (1-X)(1-p^4X)}-{X^{m_2+m_3+1}(1-(p^4X^{-1})^{m_2+1}) \over (1-X)(1-p^4X^{-1})} \Bigr)\\
&-{p^{8m_1}X^{m_1+2} \over (1-X)(1-p^{-4}X)}
\Bigl({1-(p^4X)^{m_2} \over (1-X)(1-p^4X)}-{X^{m_2+m_3-1}(1-(p^4X^{-1})^{m_2}) \over (1-X)(1-p^4X^{-1})}\Bigr)\\
&-{p^{8m_1}X^{2m_1} \over (1-X^{-1})(1-p^{-4}X^{-1})}\Bigl({1-(p^4X)^{m_2} \over (1-X)(1-p^4X)}-{X^{m_2+m_3-1}(1-(p^4X^{-1})^{m_2}) \over (1-X)(1-p^4X^{-1})} \Bigr).
\end{align*}
By simple computation, we have
\begin{align*}
&-{p^{8m_1}X^{m_1}(1+(1+p^4)X) \over (1-X)(1-p^{-4}X)(1-p^8X)} +{p^{8m_1}X^{m_1}  \over (1-X)(1-p^{-4}X)(1-X)(1-p^4X)} \\
& -{p^{8m_1}X^{m_1+2} \over (1-X)(1-p^{-4}X)(1-X)(1-p^4X)} =-{p^{8m_1+8}X^{m_1+1} \over (1-X)(1-p^4X)(1-p^8X)},
\end{align*}
\begin{align*}
&-{p^{8m_1}X^{2m_1+m_2+m_3}(1+(1+p^4)X^{-1}) \over (1-X^{-1})(1-p^{-4}X^{-1})(1-p^8X^{-1})} -{p^{8m_1}X^{2m_1+m_2+m_3+1}  \over (1-X^{-1})(1-p^{-4}X^{-1})(1-X)(1-p^4X^{-1})} \\
& +{p^{8m_1}X^{2m_1+m_2+m_3-1} \over (1-X^{-1})(1-p^{-4}X^{-1})(1-X)(1-p^4X^{-1})} =-{p^{8m_1+8}X^{2m_1+m_2+m_3-1} \over (1-X^{-1})(1-p^4X^{-1})(1-p^8X^{-1})},
\end{align*}
and
\begin{align*}
-{p^{8m_1}X^{m_1}(p^4X)^{m_2+1} \over (1-X)^2(1-p^{-4}X)(1-p^4X)}+{p^{8m_1}X^{m_1+2} (p^4X)^{m_2} \over (1-X)^2(1-p^{-4}X)(1-p^4X)}=-{p^{8m_1+4m_2+4}X^{m_1+m_2+1} \over (1-X)^2(1-p^4X)},
\end{align*}
\begin{align*}
&-{p^{8m_1}X^{2m_1}(p^4X)^{m_2+1} \over (1-X^{-1})(1-p^{-4}X^{-1})(1-X)(1-p^4X)}+{p^{8m_1}X^{2m_1} (p^4X)^{m_2} \over (1-X^{-1})(1-p^{-4}X^{-1})(1-X)(1-p^4X)}\\
&=-{p^{8m_1+4m_2}X^{2m_1+m_2-1} \over (1-X^{-1})^2(1-p^{-4}X^{-1})},
\end{align*}
\begin{align*}
{p^{8m_1}X^{m_1+m_2+m_3+1} (p^4X^{-1})^{m_2+1}\over (1-X)^2(1-p^{-4}X)(1-p^4X^{-1})}-{p^{8m_1}X^{m_1+m_2+m_3+1}(p^4X^{-1})^{m_2} \over (1-X)^2(1-p^{-4}X)(1-p^4X^{-1})}= -{p^{8m_1+4m_2} X^{m_1+m_3+1} \over (1-X)^2(1-p^{-4}X)},
\end{align*}
\begin{align*}
&{p^{8m_1}X^{2m_1+m_2+m_3+1} (p^4X^{-1})^{m_2+1} \over (1-X^{-1})(1-p^{-4}X^{-1})(1-X)(1-p^4X^{-1})}-{p^{8m_1}X^{2m_1+m_2+m_3-1}(p^4X^{-1})^{m_2}  \over (1-X^{-1})(1-p^{-4}X^{-1})(1-X)(1-p^4X^{-1})}\\
&=-{p^{8m_1+4m_2+4} X^{2m_1+m_3-1} \over (1-X^{-1})^2(1-p^4X^{-1})}.
\end{align*}
This proves the theorem.
\end{proof}
Since $\widetilde f_T^p(X)=X^{-\mathrm {ord}(\det T)}f_T^p(X^2)$, we have
\begin{corollary}
\label{cor.explicit-formula-of-Laurent-series}
Let $T=p^{m_1} \bot p^{m_1+m_2} \bot p^{m_1+m_3}$ with $0 \le m_1, 0  \le m_2 \le m_3$.
Then
\begin{align*}
&\widetilde f_T^p(X)={X^{-m_2-m_3-3m_1} \over (1-X^2)(1-p^{4}X^2)(1-p^8X^2)}+ {X^{m_2+m_3+3m_1} \over (1-X^{-2})(1-p^{4}X^{-2})(1-p^8X^{-2})} \\
&-{p^{8m_1+8}X^{-m_1-m_2-m_3+2} \over (1-X^2)(1-p^{4}X^2)(1-p^8X^2)}- {p^{8m_1+8}X^{m_1+m_2+m_3-2} \over (1-X^{-2})(1-p^{4}X^{-2})(1-p^8X^{-2})} \\
&-{p^{8m_1+4(m_2+1)}X^{-m_3+m_2-m_1+2} \over (1-X^2)^2(1-p^4X^2)}-{p^{8m_1+4(m_2+1)}X^{m_3-m_2+m_1-2} \over (1-X^{-2})^2(1-p^4X^{-2})}\\
&-{p^{8m_1+4m_2}X^{m_3-m_2-m_1+2} \over (1-X^2)^2(1-p^{-4}X^2)}-{p^{8m_1+4m_2}X^{-m_3+m_2+m_1-2} \over (1-X^{-2})^2(1-p^{-4}X^{-2})}.
\end{align*}
\end{corollary}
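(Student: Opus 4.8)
The plan is to obtain Corollary~\ref{cor.explicit-formula-of-Laurent-series} from Theorem~\ref{th.explcit-formula-of-Siegel-series} by a single change of variable, so I will first set up the relevant identities. For $T=p^{m_1}\bot p^{m_1+m_2}\bot p^{m_1+m_3}$ one has $\det T=p^{3m_1+m_2+m_3}$, hence $\ord_p(\det T)=3m_1+m_2+m_3$. The passage uses the identity $\widetilde f_T^p(X)=X^{-\ord_p(\det T)}f_T^p(X^2)$; I would justify it from the definition $\widetilde f_T^p(Y)=Y^{\ord_p(\det T)}f_T^p(Y^{-2})$ together with the functional equation~(\ref{functional}): taking $Y=X^{-1}$ gives $\widetilde f_T^p(X^{-1})=X^{-\ord_p(\det T)}f_T^p(X^2)$, and since $\widetilde f_T^p(X)=\widetilde f_T^p(X^{-1})$ the claimed identity follows.

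Given this, the proof is purely a substitution. I would take the eight-term expression for $f_T^p(X)$ from Theorem~\ref{th.explcit-formula-of-Siegel-series}, replace $X$ by $X^2$ everywhere (so each denominator factor $1-p^iX^{\pm1}$ becomes $1-p^iX^{\pm2}$, with the signs of the exponents of $p$ — including the two factors $1-p^{-4}X$ — unchanged), and then multiply the whole expression by $X^{-(3m_1+m_2+m_3)}$. A summand of the shape $\pm\,p^{a}X^{e}/D(X)$ thereby becomes $\pm\,p^{a}X^{\,2e-(3m_1+m_2+m_3)}/D(X^2)$, so the only verification needed is that each new exponent $2e-(3m_1+m_2+m_3)$ equals the one displayed in the corollary. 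For example, the third summand of $f_T^p$ is $-p^{8m_1+8}X^{m_1+1}/((1-X)(1-p^4X)(1-p^8X))$, and $2(m_1+1)-(3m_1+m_2+m_3)=-m_1-m_2-m_3+2$, which is exactly the exponent appearing in the third summand of the corollary; the remaining seven cases are the same routine arithmetic, and the denominators match term by term.

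I do not expect a genuine obstacle here: the step is elementary exponent bookkeeping. The only points requiring care are keeping the signs of the $p$-exponents in the denominators straight (the $1-p^{-4}X^2$ versus $1-p^4X^2$ factors) and applying $X\mapsto X^2$ uniformly to numerators and denominators. If one wants to halve the work, note that $\widetilde f_T^p$ is symmetric under $X\mapsto X^{-1}$ and that the eight summands split into the four "positive-power" and the four "negative-power" terms, interchanged by this symmetry; so it suffices to check four of the eight exponents and invoke the symmetry for the rest.
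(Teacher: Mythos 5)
Your proposal is correct and is essentially the paper's own argument: the paper deduces the corollary in one line from the identity $\widetilde f_T^p(X)=X^{-\ord_p(\det T)}f_T^p(X^2)$ applied to Theorem \ref{th.explcit-formula-of-Siegel-series}, which is exactly your substitution $X\mapsto X^2$ followed by multiplication by $X^{-(3m_1+m_2+m_3)}$. Your derivation of that identity from the definition together with the functional equation (\ref{functional}) is the right justification, and your exponent checks are accurate.
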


\begin{lemma}\label{lem.formal-power-series}
For a variable $A,B,C$, define a formal power series $P(A,B,C,t)$ in $t$ as
\begin{align*}
P(A,B,C,t)=\sum_{m_1 \ge 0,\,  0\le m_2 \le m_3} {t^{3m_1+m_2+m_3} A^{m_1}B^{m_2}C^{m_3} \over \beta_p(p^{m_1} \bot p^{m_1+m_2} \bot p^{m_1+m_3})}.
\end{align*}
Then
\begin{align*}
&P(A,B,C,t)=(1-p^{-2})(1-p^{-6})^{-1}(1-p^{-8})^{-1}(1-p^{-12})^{-1} \\
&\times {1+(p^{-5}+p^{-9})tC+(p^{-14}+p^{-18})t^2BC+p^{-23}t^3BC^2 \over (1-p^{-27}At^3)(1-p^{-10}BCt^2)(1-p^{-1}Ct)}.
\end{align*}
\end{lemma}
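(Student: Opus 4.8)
The plan is to carry out the triple summation directly, using the closed formula for $\beta_p$ from Theorem~\ref{th.explicit-formula-of-local-density} to collapse everything into elementary geometric series, and then to simplify the resulting rational function.

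First I would strip off the $m_1$-summation. By Proposition~\ref{prop.induction-formula-of-local-density1}(1), applied $m_1$ times to $T_0 = 1 \bot p^{m_2} \bot p^{m_3}$, we have
\[
\beta_p(p^{m_1} \bot p^{m_1+m_2} \bot p^{m_1+m_3}) = p^{27 m_1}\,\beta_p(1 \bot p^{m_2} \bot p^{m_3}),
\]
so the factor $\sum_{m_1 \ge 0}(p^{-27}At^3)^{m_1} = (1-p^{-27}At^3)^{-1}$ comes out and $P(A,B,C,t) = (1-p^{-27}At^3)^{-1}\,Q(B,C,t)$, where
\[
Q(B,C,t) = \sum_{0 \le m_2 \le m_3} \frac{t^{m_2+m_3}B^{m_2}C^{m_3}}{\beta_p(1 \bot p^{m_2} \bot p^{m_3})}.
\]

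Next I would split $Q$ according to the three shapes of $(0,m_2,m_3)$ in Theorem~\ref{th.explicit-formula-of-local-density}. Put $c_1 = (1-p^{-2})(1-p^{-6})(1-p^{-8})(1-p^{-12})$, $c_2 = (1-p^{-2})(1-p^{-4})(1-p^{-6})(1-p^{-8})$, $c_3 = (1-p^{-2})(1-p^{-4})^2(1-p^{-6})$, and abbreviate $u = p^{-1}Ct$, $v = p^{-10}BCt^2$. The term $(m_2,m_3) = (0,0)$ contributes $c_1^{-1}$; the range $0 = m_2 < m_3$, where $\beta_p(1 \bot 1 \bot p^{m_3}) = p^{m_3}c_2$, contributes $c_2^{-1}\sum_{m_3 \ge 1}u^{m_3} = c_2^{-1}u/(1-u)$; the range $0 < m_2 = m_3$, where $\beta_p(1 \bot p^{m_2} \bot p^{m_2}) = p^{10m_2}c_2$, contributes $c_2^{-1}\sum_{m_2 \ge 1}v^{m_2} = c_2^{-1}v/(1-v)$; and the range $0 < m_2 < m_3$, where $\beta_p(1 \bot p^{m_2} \bot p^{m_3}) = p^{9m_2+m_3}c_3$, becomes after the substitution $m_3 = m_2 + \ell$ $(\ell \ge 1)$ the product $c_3^{-1}\bigl(\sum_{m_2 \ge 1}v^{m_2}\bigr)\bigl(\sum_{\ell \ge 1}u^{\ell}\bigr) = c_3^{-1}uv/((1-u)(1-v))$. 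Hence
\[
Q(B,C,t) = \frac{1}{c_1} + \frac{1}{c_2}\Bigl(\frac{u}{1-u} + \frac{v}{1-v}\Bigr) + \frac{1}{c_3}\cdot\frac{uv}{(1-u)(1-v)}.
\]

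Finally I would clear the denominator $(1-u)(1-v)$. Collecting powers of $u$ and $v$ gives
\[
Q(B,C,t) = \frac{1}{c_1}\cdot\frac{1 + (c_1/c_2 - 1)(u+v) + (1 - 2c_1/c_2 + c_1/c_3)uv}{(1-u)(1-v)},
\]
and the scalar identity $c_1/c_2 - 1 = (p^{-4} - p^{-12})/(1-p^{-4}) = p^{-4} + p^{-8}$, together with the polynomial identity $(1-x)^2 - 2(1-x^3)(1-x) + (1-x^2)(1-x^3) = x^3(1-x)^2$ for $x = p^{-4}$ (which yields $1 - 2c_1/c_2 + c_1/c_3 = p^{-12}$), reduces the numerator to $1 + (p^{-4}+p^{-8})(u+v) + p^{-12}uv$. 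Substituting $u = p^{-1}Ct$ and $v = p^{-10}BCt^2$ turns this into $1 + (p^{-5}+p^{-9})tC + (p^{-14}+p^{-18})t^2BC + p^{-23}t^3BC^2$, and reinserting the extracted factor $(1-p^{-27}At^3)^{-1}$, the constant $c_1^{-1}$, and the denominators $(1-p^{-1}Ct)(1-p^{-10}BCt^2)$ gives the asserted closed form. The only non-formal step is this last simplification; it is routine, but the bookkeeping of exponents is delicate, so — in the spirit of the paper's use of a computer algebra system elsewhere — I would verify the two algebraic identities in $x = p^{-4}$ mechanically.
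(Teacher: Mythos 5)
Your proof is correct and is essentially the paper's own argument: both reduce the sum to geometric series in $u=p^{-1}Ct$, $v=p^{-10}BCt^2$, $p^{-27}At^3$ using the explicit values of $\beta_p$ from Theorem \ref{th.explicit-formula-of-local-density} split over the four shapes of $(m_1,m_2,m_3)$, and your preliminary extraction of the $m_1$-sum via Proposition \ref{prop.induction-formula-of-local-density1}(1) is only a cosmetic reorganization of the same computation. One remark: the prefactor you obtain, $\{(1-p^{-2})(1-p^{-6})(1-p^{-8})(1-p^{-12})\}^{-1}$, is the correct one (check the constant term at $t=0$, or compare with Theorem \ref{th.explicit-local-RS}); the exponent $+1$ on $(1-p^{-2})$ in the printed statement of the lemma is a typo, so the discrepancy with your final formula is not an error on your part.
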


\begin{proof}
We have 
\begin{align*}
&P(A,B,C,t)=\sum_{m_1=0}^{\infty} { t^{3m_1}A^{m_1} \over \beta_p(p^{m_1} \bot p^{m_1} \bot p^{m_1})} +\sum_{m_1=0}^{\infty}\sum_{m_3=0}^{\infty} {t^{3m_1+m_3+1}A^{m_1}C^{m_3+1} \over \beta_p(p^{m_1} \bot p^{m_1} \bot p^{m_1+m_3+1})}\\
&+\sum_{m_1=0}^{\infty} \sum_{m_2=0}^{\infty} {t^{3m_1+2m_2+2}A^{m_1}B^{m_2+1}C^{m_2+1} \over \beta_p(p^{m_1} \bot p^{m_1+m_2+1} \bot p^{m_1+m_2+1})}\\
&+\sum_{m_1=0}^{\infty}\sum_{m_2=0}^{\infty}\sum_{m_3=0}^{\infty} {t^{3m_1+2m_2+m_3+3}A^{m_1}B^{m_1+m_2+1}C^{m_1+m_2+m_3+2} \over \beta_p(p^{m_1} \bot p^{m_1+m_2+1} \bot p^{m_1+m_2+m_3+2})}.
\end{align*}
Put $d=(1-p^{-2})(1-p^{-6})(1-p^{-8})(1-p^{-12})$. and 
$\widetilde P(A,B,C,t)=d P(A,B,C,t)$. Then by Theorem \ref{th.explicit-formula-of-local-density},
we have 
\begin{align*}
&\widetilde P(A,B,C,t)=\sum_{m_1=0}^{\infty} t^{3m_1}A^{m_1}p^{-27m_1} \\
&+(1+p^{-4}+p^{-8}) \sum_{m_1=0}^{\infty} \sum_{m_3=0}^{\infty} t^{3m_1+m_3+1} A^{m_1}C^{m_3+1}p^{-27m_1-m_3-1}\\
&+(1+p^{-4}+p^{-8})\sum_{m_1=0}^{\infty} \sum_{m_2=0}^{\infty}t^{3m_1+2m_2+2}A^{m_1}B^{m_2+1}C^{m_2+1} p^{-27m_1-10m_2-10}\\
&+(1+p^{-4})(1+p^{-4}+p^{-8})\sum_{m_1=0}^{\infty}\sum_{m_2=0}^{\infty}\sum_{m_3=0}^{\infty}t^{3m_1+2m_2+m_3+3}A^{m_1}B^{m_1+m_2+1}C^{m_1+m_2+m_3+2}
\\
& \times p^{-27m_1-10m_2-m_3-11}\\
&={1 \over 1-t^3Ap^{-27}} +{(1+p^{-4}+p^{-8})Cp^{-1}t \over (1-t^3Ap^{-27})(1-tCp^{-1})}\\
&+{(1+p^{-4}+p^{-8})t^2BCp^{-10} \over (1-t^3Ap^{-27})(1-t^2p^{-10}BC)}
+{(1+p^{-4})(1+p^{-4}+p^{-8})t^3BC^2p^{-11}\over (1-t^3Ap^{-27})(1-t^2p^{-10}BC)(1-tCp^{-1})}.
\end{align*}
Thus the assertion can be proved by simple computation.
\end{proof}

\begin{theorem} \label{th.explicit-local-RS}
We have
\begin{align*}
H_p(X,t) &=\{(1-p^{-2})(1-p^{-6})(1-p^{-8})(1-p^{-12})\}^{-1}\\
&\times  {(1-p^{-14}t^2)(1+p^{-5}t)(1+p^{-9}t) \over 1-p^{-1}t}\\
&\times {1 \over \prod_{i=1}^3 (1-p^{-4i+3}t)(1-p^{-4i+3}X^{-2}t)(1-p^{-4i+3}X^2t)}.
\end{align*}
\end{theorem}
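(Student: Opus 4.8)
The plan is to compute $H_p(X,t)=\sum_{m\ge 0}\lambda_p(p^md,X)\,t^m$ directly from the definition of $\lambda_p$ and the explicit data assembled in Sections 6 and 7. Recall that
\[
\lambda_p(d,X)=\sum_{T\in\frkJ(d,\ZZ_p)/\calm'(\ZZ_p)}\frac{\widetilde f_T^p(X)^2}{\beta_p(T)},
\]
and that by Lemma \ref{lem.Jordan-decomposition} every class is represented by $T=p^{m_1}\bot p^{m_1+m_2}\bot p^{m_1+m_3}$ with $0\le m_1$, $0\le m_2\le m_3$, where $\ord_p(\det T)=3m_1+m_2+m_3$. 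So first I would reorganize the sum: grouping by $\ord_p(\det T)$ amounts to summing over all triples $(m_1,m_2,m_3)$ with $t$ tracking $3m_1+m_2+m_3$, i.e.
\[
H_p(X,t)=\sum_{m_1\ge 0,\ 0\le m_2\le m_3}\frac{t^{3m_1+m_2+m_3}\,\widetilde f_T^p(X)^2}{\beta_p(p^{m_1}\bot p^{m_1+m_2}\bot p^{m_1+m_3})}.
\]
Then I would substitute the closed form for $\widetilde f_T^p(X)$ from Corollary \ref{cor.explicit-formula-of-Laurent-series} and expand the square. Since $\widetilde f_T^p(X)$ is a sum of eight terms, each a monomial in $p$ and $X$ (with exponents affine-linear in $m_1,m_2,m_3$) times a fixed rational factor in $X$ and $p$, the square is a sum of $64$ such terms. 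Crucially, in each resulting term the dependence on $(m_1,m_2,m_3)$ is through a single monomial $A^{m_1}B^{m_2}C^{m_3}$ for appropriate $A,B,C$ (monomials in $p$ and $X^{\pm 2}$), so each of the $64$ pieces is exactly of the shape handled by Lemma \ref{lem.formal-power-series}, after also dividing by $\beta_p$ whose behavior across the four chamber cases is given by Theorem \ref{th.explicit-formula-of-local-density}.

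The key steps, in order: (i) plug Corollary \ref{cor.explicit-formula-of-Laurent-series} into the displayed sum and expand $\widetilde f_T^p(X)^2$; (ii) for each of the $64$ cross terms, read off the triple $(A,B,C)$ and the overall constant, and apply Lemma \ref{lem.formal-power-series} to get a rational function in $t$ (with coefficients rational in $p$ and $X$); (iii) sum the $64$ rational functions and simplify. Step (iii) is where the bookkeeping is heaviest: the common denominator will a priori be a product of factors $(1-p^{-27}At^3)$, $(1-p^{-10}BCt^2)$, $(1-p^{-1}Ct)$ over the various $(A,B,C)$, and one must verify the massive cancellation that collapses everything to
\[
\{(1-p^{-2})(1-p^{-6})(1-p^{-8})(1-p^{-12})\}^{-1}\cdot\frac{(1-p^{-14}t^2)(1+p^{-5}t)(1+p^{-9}t)}{1-p^{-1}t}\cdot\prod_{i=1}^3\frac{1}{(1-p^{-4i+3}t)(1-p^{-4i+3}X^{-2}t)(1-p^{-4i+3}X^2t)}.
\]
As the authors indicate in the introduction, this simplification is carried out with Mathematica; I would do the same, but I would also sanity-check it by specializing: setting $X=1$ should reduce the triple product to $\prod_{i=1}^3(1-p^{-4i+3}t)^{-3}$, and one can independently cross-check low-order coefficients in $t$ against $\lambda_p(1,X)=\beta_p(1_3)^{-1}$ from Corollary \ref{cor.Igusa} and $\lambda_p(p,X)$ computed by hand from $T\in\{1_2\bot p\}$ with $\widetilde f_{1_2\bot p}^p(X)$ read off from Corollary \ref{cor.explicit-formula-of-Laurent-series}.

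The main obstacle is precisely the algebraic simplification in step (iii): a priori the sum of $64$ rational functions has an enormous spurious denominator, and one must exhibit the cancellations showing that the true poles in $t$ are only at $t=p^{1}$, $t=p^{4i-3}$, $t=p^{4i-3}X^{\pm 2}$ ($i=1,2,3$) and that the numerator factors as $(1-p^{-14}t^2)(1+p^{-5}t)(1+p^{-9}t)$. The functional equation $\widetilde f_T^p(X^{-1})=\widetilde f_T^p(X)$ of \eqref{functional}, together with the manifest $X\leftrightarrow X^{-1}$ symmetry of the target formula, provides a useful structural constraint that halves the independent terms and guides the simplification, but the bulk of the work is the explicit computation, which I would delegate to a computer algebra system and then verify by the specialization and low-degree checks described above.
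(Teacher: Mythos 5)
Your proposal is correct and follows essentially the same route as the paper: the authors likewise expand $\widetilde f_T^p(X)^2$ via Corollary \ref{cor.explicit-formula-of-Laurent-series} into $64$ cross terms $A_i(X)A_j(X)$ with monomial dependence $(X_iX_j)^{m_1}(Y_iY_j)^{m_2}(Z_iZ_j)^{m_3}$, apply Lemma \ref{lem.formal-power-series} to each, and carry out the final simplification with Mathematica. Your additional sanity checks (the $X\leftrightarrow X^{-1}$ symmetry and low-order coefficient comparisons against Corollary \ref{cor.Igusa}) are sensible but not part of the paper's argument.
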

\begin{proof}
For $i=1,2,3,4$ put
\begin{align*}
&A_1(X)=\{(1-X^2)(1-p^{4}X^2)(1-p^8X^2)\}^{-1} \\
&A_2(X)=-p^8X^2\{(1-X^2)(1-p^{4}X^2)(1-p^8X^2)\}^{-1} \\
&A_3(X)=-p^4X^2\{(1-X^2)^2(1-p^4X^2)^{-1} \\
&A_4(X)=-X^2\{(1-X^2)^2(1-p^{-4}X^2)\}^{-1} ,
\end{align*}
and for $i=5,6,7,8$ put $A_i(X)=A_{i-4}(X^{-1})$.
For $i=1,2,3,4$ we also define $X_i=X_i(X),Y_i=Y_i(X),Z_i=Z_i(X)$ as
\begin{align*}
& X_1=X^{-3},X_2=X_3=X_4=p^8X^{-1} \\
& Y_1=Y_2=X^{-1}, Y_3=p^4X, Y_4=p^4X^{-1} \\
& Z_1=Z_2=Z_3=X^{-1}, Z_4=X,
\end{align*}
and for $i=5,6,7,8$ put $X_i(X)=X_{i-4}(X^{-1}), Y_i(X)=Y_{i-4}(X^{-1}), Z_i(X)=Z_{i-4}(X^{-1})$. Then, by Corollary \ref{cor.explicit-formula-of-Laurent-series}, we have
\begin{align*}
&H_p(X,t)=\sum_{1 \le i,j \le 8} A_i(X)A_j(X)P(X_i(X)X_j(X),Y_i(X)Y_j(X),Z_i(X)Z_j(X),t).
\end{align*}
Thus the assertion can be proved by Lemma \ref{lem.formal-power-series} with the aid of Mathematica.
\end{proof}

\section{Explicit formula for the Rankin-Selberg series}
Let $F_f$ be the Ikeda type lift of $f$ for $E_{7,3}$ in Section 5. 
By Theorems \ref{th.explicit-local-RS} and \ref{th.local-global-RS2}, we obtain
\begin{theorem} \label{th.explicit-RS}
\begin{align*}
R(s,F_f,F_f) &=c \, \zeta(2)\zeta(6)\zeta(8)\zeta(12) \prod_{i=1}^3 \zeta(2s-4k+4i+6)^{-1} \\
&\times \prod_{i=1}^3 \zeta(s-2k+4i-3)L(s-2k+4i-3,{\rm Sym}^2\pi_f).
\end{align*}
\end{theorem}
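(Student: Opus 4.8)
The plan is to feed the closed formula for $H_p(X,t)$ from Theorem~\ref{th.explicit-local-RS} into the local–global expansion $R(s,F_f,F_f)=c\prod_p H_p(\alpha_p,p^{-s+2k})$ of Theorem~\ref{th.local-global-RS2}. Writing $t=p^{-s+2k}$ and $X=\alpha_p$, every factor appearing in $H_p$ becomes a term of the shape $(1-p^{-u})^{\pm1}$ or $(1-p^{-u}\alpha_p^{\pm2})^{\pm1}$ with $u$ a fixed linear function of $s$; since these are Euler factors, the product over all primes $p$ splits as a product of zeta values and symmetric-square $L$-values. Thus the proof reduces to a term-by-term identification, and there is no serious analytic content beyond the absolute convergence already established in Theorem~\ref{th.Rankin-Selberg}.

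Concretely, with $t=p^{-s+2k}$ one has $p^{-4i+3}t=p^{-(s-2k+4i-3)}$ for $i=1,2,3$ and $p^{-14}t^2=p^{-(2s-4k+14)}$, and I would argue as follows. The prefactor $\{(1-p^{-2})(1-p^{-6})(1-p^{-8})(1-p^{-12})\}^{-1}$ yields, after multiplying over $p$, the value $\zeta(2)\zeta(6)\zeta(8)\zeta(12)$. For each $i$ the block $(1-p^{-4i+3}t)(1-p^{-4i+3}X^{-2}t)(1-p^{-4i+3}X^{2}t)$ in the denominator of $H_p$ becomes, at $X=\alpha_p$, exactly the reciprocal of the $p$-th Euler factor of $L(s-2k+4i-3,\mathrm{Sym}^2\pi_f)$; here it is essential that the local Euler factor of the symmetric-square $L$-function carries the extra $(1-p^{-s})$ term, so that all three terms of the block are accounted for. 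Hence the product over $p$ of the reciprocals of these three blocks contributes $\prod_{i=1}^3 L(s-2k+4i-3,\mathrm{Sym}^2\pi_f)$. Finally, for the middle fraction I would use $1+x=(1-x^2)/(1-x)$ to rewrite $1+p^{-5}t=(1-p^{-(2s-4k+10)})/(1-p^{-(s-2k+5)})$ and $1+p^{-9}t=(1-p^{-(2s-4k+18)})/(1-p^{-(s-2k+9)})$, so that
\[\prod_p\frac{(1-p^{-14}t^2)(1+p^{-5}t)(1+p^{-9}t)}{1-p^{-1}t}=\frac{\zeta(s-2k+1)\zeta(s-2k+5)\zeta(s-2k+9)}{\zeta(2s-4k+10)\zeta(2s-4k+14)\zeta(2s-4k+18)}=\prod_{i=1}^3\frac{\zeta(s-2k+4i-3)}{\zeta(2s-4k+4i+6)}.\]
Multiplying these three contributions and reinstating the constant $c$ gives the asserted identity.

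All the rearrangements of infinite products above are legitimate in the region $\mathrm{Re}(s)\gg0$, where $R(s,F_f,F_f)$ and each of the displayed Euler products converge absolutely, and the resulting identity of meromorphic functions on $\CC$ then holds everywhere by analytic continuation. The one point requiring care — and it is bookkeeping rather than a real obstacle — is to keep the three terms of each $i$-block in the denominator of $H_p$ grouped together, so that the symmetric-square Euler factor is read off correctly and its $(1-p^{-(s-2k+4i-3)})$ part is not confused with the separate factor $1-p^{-1}t$ sitting in the denominator of the middle fraction of $H_p$ (the latter being what produces the $\zeta(s-2k+1)$ in the numerator above).
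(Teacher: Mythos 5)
Your proposal is correct and is essentially the paper's own argument: the paper proves Theorem \ref{th.explicit-RS} simply by substituting the closed form of $H_p(X,t)$ from Theorem \ref{th.explicit-local-RS} into the Euler product of Theorem \ref{th.local-global-RS2} with $t=p^{-s+2k}$, $X=\alpha_p$, and reading off the zeta and symmetric-square factors exactly as you do. Your explicit bookkeeping (including the observation that each $i$-block is the full reciprocal Euler factor of $L(s-2k+4i-3,\mathrm{Sym}^2\pi_f)$, with the $(1-p^{-u})$ term included, and that the separate $1-p^{-1}t$ produces the $\zeta(s-2k+1)$) is accurate.
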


By the residue formula in Theorem \ref{th.Rankin-Selberg}, we have the following period relation

\[\langle F_f, F_f \rangle =\gamma_{k} \pi^{-6k-3}\prod_{i=1}^3 L(4i-3,{\rm Sym}^2\pi_f),
\]
with $\gamma_k=c \zeta(2)\zeta(6)\zeta(8)\zeta(12) 2^{-12k+22}\cdot 3^3\cdot 5 \cdot(2k-1)!(2k-5)!(2k-9)!.$
By Proposition \ref{prop.constant-at-infinity},
 we have
$$\gamma_k={ (2k-1)!(2k-5)!(2k-9)! \ 691 \over 2^{12k-7} \cdot 3^3 \cdot 5 \cdot 7^2\cdot 13}\in \Bbb Q.
$$
This proves Theorem \ref{th.period-relation}.

\end{document}